\newtheorem{theorem}{Theorem}[section]
\newtheorem{lemma}[theorem]{Lemma}
\newtheorem{proposition}[theorem]{Proposition}
\newtheorem{corollary}[theorem]{Corollary}
\numberwithin{equation}{section}
\newcommand{\fconvs}{{\mbox{\rm Conv}_{{\rm sc}}(\R^n)}} 
\newcommand{\fconvx}{{\mbox{\rm Conv}(\R^n)}} 
\newcommand{\fconvf}{{\mbox{\rm Conv}(\R^n; \R)}}
\newcommand{\infconv}{\mathbin{\Box}}
\newcommand{\sq}{\mathbin{\vcenter{\hbox{\rule{.3ex}{.3ex}}}}}
\newcommand{\cK}{{\mathcal K}} 
\newcommand{\cR}{\operatorname{\mathcal R}} 
\renewcommand{\S}{\mathbb{S}}
\newcommand{\sn}{{\mathbb{S}^{n-1}}}
\newcommand{\Bn}{B^n}
\newcommand{\SO}{\operatorname{SO}}
\newcommand{\Hess}{{\operatorname{D}}^2}
\newcommand{\HessTilde}{{\tilde{\operatorname{D}}}^2}
\newcommand{\D}{{\operatorname{D}}}
\newcommand{\argmin}{\operatorname{argmin}}
\newcommand{\diag}{\operatorname{diag}}
\DeclareMathOperator{\oZ}{\operatorname{Z}}
\newcommand{\oZZ}[2]{\operatorname{V}_{#1,#2}} 
\newcommand{\oZZd}[2]{\operatorname{V}_{#1,#2}^{*}} 
\newcommand{\R}{{\mathbb R}}
\newcommand{\N}{{\mathbb N}}
\newcommand{\dom}{\operatorname{dom}}
\newcommand{\hm}{\mathcal H}
\newcommand{\epi}{\operatorname{epi}}
\renewcommand{\d}{\,\mathrm{d}}
\newcommand{\ind}{{\rm\bf I}}
\newcommand{\Borel}{{\mathcal B}}
\newcommand{\Radon}{\mathcal{M}(\R^n)}
\newcommand{\Vconvf}[1]{\operatorname{VConv}_{#1}(\R^n; \R)}
\newcommand{\Had}[2]{D_{#1}^{#2}} 
\newcommand{\MA}{\text{\rm MA}} 
\newcommand{\MAp}{\text{\rm MA}^{\!*}} 
\newcommand{\nhd}[1]{\text{\rm MA}_{#1}} 
\newcommand{\nhp}[1]{\text{\rm MA}^{*}_{#1}} 
\newcommand{\hd}[1]{\Phi_{#1}} 
\newcommand{\hp}[1]{\Psi_{#1}} 
\begin{document}

\title[The Hadwiger Theorem on Convex Functions, III]{The Hadwiger Theorem on Convex Functions, III:\\ Steiner Formulas and Mixed Monge--Amp\`ere Measures}

\author{Andrea Colesanti}
\address{Dipartimento di Matematica e Informatica ``U. Dini''
Universit\`a degli Studi di Firenze,
Viale Morgagni 67/A - 50134, Firenze, Italy}
\email{andrea.colesanti@unifi.it}

\author{Monika Ludwig}
\address{Institut f\"ur Diskrete Mathematik und Geometrie,
Technische Universit\"at Wien,
Wiedner Hauptstra\ss e 8-10/1046,
1040 Wien, Austria}
\email{monika.ludwig@tuwien.ac.at}

\author{Fabian Mussnig}
\address{Dipartimento di Matematica e Informatica ``U. Dini''
Universit\`a degli Studi di Firenze,
Viale Morgagni 67/A - 50134, Firenze, Italy}
\email{mussnig@gmail.com}

\date{}

\begin{abstract} 
A complete family of functional Steiner formulas is established. 
As applications, an explicit representation of functional intrinsic volumes using special mixed Monge--Amp\`ere measures  and  a new version of the Hadwiger theorem on convex functions are obtained.

\bigskip
{\noindent 2020 AMS subject classification: 52B45 (26B25, 49Q20, 52A41, 52A39)}
\end{abstract}

\maketitle

\section{Introduction and Statement of Results}
The classical Steiner formula states that the volume of the outer parallel set of a convex body (that is, a non-empty, compact, convex set) in $\R^n$ at distance $r>0$ can be expressed as a polynomial in $r$ of degree at most $n$.  Using that the outer parallel set of  a convex body $K\subset \R^n$ at distance $r>0$ is just the Minkowski (or vector sum) of $K$ and $r\Bn$, the ball of radius $r$, we get
\begin{equation}
\label{steiner}
V_n({K+r \Bn})=\sum_{j=0}^n r^{n-j}\kappa_{n-j} V_j(K)
\end{equation}
for every $r>0$, where $V_n$ is $n$-dimensional volume or Lebesgue measure and $\kappa_j$ is the $j$-dimensional volume of the unit ball in $\R^j$ (with the convention that $\kappa_0\!:=1$). The coefficients $V_j(K)$ are known as the \emph{intrinsic volumes} of $K$. Up to  normalization and numbering, they coincide with the classical quermassintegrals. In particular,  $V_{n-1}(K)$ is  proportional to the surface area of $K$ and $V_0(K)$ is the Euler characteristic of $K$  (that is, $V_0(K)\!:= 1$) for every convex body $K$ in $\R^n$ (cf. \cite{Schneider:CB2}). 

A complete characterization of intrinsic volumes is due to Hadwiger, who in his celebrated theorem classified all continuous, translation and rotation invariant valuations on the space, $\cK^n$,  of convex bodies in $\R^n$.
Here, we say that  $\oZ\colon\cK^n\to\R$ is a \emph{valuation} if
$$\oZ(K)+\oZ(L)=\oZ(K\cup L)+ \oZ(K\cap L)$$
for every $K,L\in\cK^n$ such that also $K\cup L\in\cK^n$. It is \emph{translation invariant} if $\oZ(\tau K)=\oZ(K)$ for every $K\in\cK^n$ and translation $\tau$ on $\R^n$ and  \emph{rotation invariant} if $\oZ(\vartheta K)=\oZ(K)$ for every $K\in\cK^n$ and $\vartheta\in \SO(n)$. The topology of $\cK^n$ is induced by the Hausdorff metric.
\begin{theorem}[Hadwiger \cite{Hadwiger:V}]\label{hugo}
A functional $\oZ\colon\cK^n\to \R$ is  a continuous, translation and rotation invariant valuation if and only if  there exist constants $\zeta_0, \dots, \zeta_n\in\R$ such that 
$$\oZ(K) = \sum_{j=0}^n \zeta_j V_j(K)$$
for every $K\in\cK^n$. 
\end{theorem}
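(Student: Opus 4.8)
The plan is to prove the nontrivial (``only if'') implication by induction on the dimension $n$, following Klain's approach to Hadwiger's theorem; the converse is routine, since each intrinsic volume $V_j$ inherits the valuation property and the translation and rotation invariance from Lebesgue measure through the Steiner formula~\eqref{steiner}, and its continuity follows from that of $V_n$ and the polynomial expansion in~\eqref{steiner}. For the base case $n=1$, every convex body in $\R$ is a point or a closed segment. Translation invariance forces $\oZ$ to take a constant value $\zeta_0$ on points, and applying the valuation property to $[0,s]\cup[s,s+t]=[0,s+t]$ shows that $t\mapsto\oZ([0,t])-\zeta_0$ is a continuous solution of Cauchy's functional equation on $[0,\infty)$, hence linear; so $\oZ=\zeta_0V_0+\zeta_1V_1$ for suitable $\zeta_0,\zeta_1\in\R$.

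For the inductive step, let $n\ge2$, assume the statement in dimension $n-1$, and let $\oZ\colon\cK^n\to\R$ be a continuous, translation and rotation invariant valuation. Fix a hyperplane $H$ through the origin and restrict $\oZ$ to the convex bodies contained in $H$. This gives a continuous valuation on $\cK(H)$ that is invariant under translations of $H$ and under the rotations of $H$ (a group isomorphic to $\SO(n-1)$), so by the inductive hypothesis and the fact that intrinsic volumes computed within $H$ agree with those in $\R^n$, there are constants $c_0,\dots,c_{n-1}\in\R$ with $\oZ(K)=\sum_{j=0}^{n-1}c_jV_j(K)$ for all $K\subset H$; by rotation invariance these constants are independent of $H$. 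Hence $\oZ':=\oZ-\sum_{j=0}^{n-1}c_jV_j$ is a continuous, translation and rotation invariant valuation that vanishes on every convex body contained in a hyperplane, i.e.\ $\oZ'$ is \emph{simple}.

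The remaining task is to show that such a simple valuation is a constant multiple of $V_n$. Decompose $\oZ'=\oZ'_++\oZ'_-$ into its parts that are even and odd under $K\mapsto-K$; since rotations commute with $-\mathrm{id}$, both parts are again continuous, translation and rotation invariant, and simple. By Klain's characterization of even simple valuations, $\oZ'_+=\zeta_nV_n$ for some $\zeta_n\in\R$. By Schneider's characterization of odd simple valuations, $\oZ'_-(K)=\int_{\sn}f(u)\,\d S_{n-1}(K,u)$ for an odd continuous function $f$ on $\sn$, where $S_{n-1}(K,\cdot)$ denotes the surface area measure of $K$; since $\SO(n)$ acts transitively on $\sn$ for $n\ge2$, rotation invariance forces $f$ to be constant, hence $f\equiv0$, so $\oZ'_-=0$. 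Combining, $\oZ=\sum_{j=0}^{n-1}c_jV_j+\zeta_nV_n$, which is the asserted representation with $\zeta_j:=c_j$ for $j<n$.

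The main obstacle is the input invoked in the last step: Klain's theorem that a continuous, translation invariant, even, simple valuation on $\cK^n$ must be proportional to Lebesgue measure, together with Schneider's companion statement for the odd case. Their proofs are genuinely delicate --- one shows that such a valuation is, on bodies lying in hyperplanes of a fixed direction, induced by a density, and then uses dissections of cubes and a dimensional induction to pin it down to volume (respectively, to a surface-area integral) --- and establishing the required regularity is the technical core of the whole argument. In the present context one would simply cite these classification results from the literature rather than reprove them.
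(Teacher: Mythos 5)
The paper does not prove Theorem~\ref{hugo}: it is stated with an attribution to Hadwiger \cite{Hadwiger:V} and quoted as known background, so there is no internal argument to compare yours against. What you have written is the standard modern proof (dimensional induction reducing to the simple case, then a volume-theorem characterization), which is the route of Klain and Rota \cite{Klain:Rota} and is what the cited literature has in mind.

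Two remarks on the argument itself. First, your handling of the simple case is more elaborate than necessary. After reducing to a continuous, translation and rotation invariant, simple valuation $\oZ'$, the fastest route is to cite the Volume Theorem of Klain--Rota (\cite{Klain:Rota}, the result the paper itself invokes as Theorem~9.2.3) directly: a continuous, simple, translation and $\SO(n)$-invariant valuation on $\cK^n$ is a multiple of $V_n$, with no parity hypothesis. Splitting into even and odd parts and invoking Klain's even-simple theorem and Schneider's odd-simple theorem separately is correct, but those two results are strictly stronger inputs (they dispense with rotation invariance) and are correspondingly harder, so you end up citing more than you need.

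Second, your dispatch of the odd part has a genuine, though easily repaired, gap. In Schneider's representation $\oZ'_-(K)=\int_{\sn} f(u)\,\d S_{n-1}(K,u)$, the function $f$ is \emph{not} uniquely determined: since $\int_{\sn} u\,\d S_{n-1}(K,u)=0$ for every $K$, one may add to $f$ the restriction of any linear function without changing the valuation. Rotation invariance of $\oZ'_-$ therefore does not force $f$ to be constant; it only forces $f\circ\vartheta-f$ to be (the restriction of) a linear function for every $\vartheta\in\SO(n)$. To conclude $\oZ'_-\equiv 0$ one should instead average: for every $K$ and $\vartheta$ one has $\oZ'_-(K)=\int_{\sn} f(\vartheta u)\,\d S_{n-1}(K,u)$, so integrating over $\SO(n)$ with Haar measure replaces $f$ by its spherical mean $\bar f$, which is constant and which vanishes because $f$ is odd. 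Alternatively, fix from the outset the representative $f$ orthogonal to linear functions; for that choice rotation invariance does force constancy. As written, the sentence ``rotation invariance forces $f$ to be constant'' is not quite right and should be patched along one of these lines.
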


\noindent
In addition to its many applications in convex and integral geometry  (see \cite{Hadwiger:V,Klain:Rota}), the Hadwiger theorem can be used to give a simple proof of \eqref{steiner}. 

We remark that the classification of valuations on convex bodies is a classical subject, which is described in  \cite[Chapter~6]{Schneider:CB2}. Also see \cite{HLYZ_acta, BLYZ_cpam} for some newly defined valuations and  \cite{Alesker99,Alesker01,AleskerFaifman, BernigFaifman2017,Bernig:Fu, Haberl_sln, LiMa, Ludwig:Reitzner, Ludwig:Reitzner2,Ludwig:convex, Haberl:Parapatits_centro} for some recent classification results.

\goodbreak
Recently, the authors \cite{Colesanti-Ludwig-Mussnig-5} introduced functional intrinsic volumes on convex functions. Let
$$\fconvs:=\Big\{u\colon\R^n\to(-\infty,+\infty]\colon u \not\equiv +\infty, \lim_{|x|\to+\infty} \frac{u(x)}{|x|}=+\infty, u \text{ is l.s.c. and convex}\Big\}$$
denote the space of all proper, super-coercive, lower semicontinuous, convex functions on $\R^n$, where $\vert \cdot\vert$ denotes the Euclidean norm. For $\zeta\in C_b((0,\infty))$, the set of continuous functions with bounded support on $(0,\infty)$, and $0\leq j \leq n$, consider the functional
\begin{equation}
\label{eq:funct_int_vol_smooth}
u\mapsto \int_{\R^n} \zeta(|\nabla u(x)|) \big[\Hess u(x)\big]_{n-j} \d x
\end{equation}
on $\fconvs\cap C_+^2(\R^n)$, where $C_+^2(\R^n)$ denotes the set of $u\in C^2(\R^n)$ with positive definite Hessian matrix $\Hess u$  and $[A]_k$ is the $k$th elementary symmetric function of the eigenvalues of the symmetric $n\times n$ matrix $A$ (with the convention that $[A]_0\!:=1$).

Under suitable conditions on the function $\zeta$, the functional \eqref{eq:funct_int_vol_smooth} 
continuously extends to the whole space $\fconvs$. Here, continuity is understood with respect to epi-convergence (see Subsection~\ref{ss:convex functions}). In case $\zeta$ can be identified with an element of $C_c([0,\infty))$, the set of continuous functions with compact support on $[0,\infty)$, it was shown in \cite{Colesanti-Ludwig-Mussnig-4} that \eqref{eq:funct_int_vol_smooth} continuously extends to $\fconvs$ by using Hessian measures (see Subsection \ref{ss:hessian_measures_fconvs} for the definition).

More recently, the authors proved that  \eqref{eq:funct_int_vol_smooth} continuously extends for the following classes of singular densities $\zeta$. For $0\leq j \leq n-1$, let
$$\Had{j}{n}:=\Big\{\zeta\in C_b((0,\infty))\colon \lim_{s\to 0^+} s^{n-j} \zeta(s)=0, \lim_{s\to 0^+} \int_s^{\infty} t^{n-j-1} \zeta(t) \d t \text{ exists and is finite} \Big\}.$$
In addition, let $\Had{n}{n}$ be the set of all functions $\zeta\in C_b((0,\infty))$ such that $\lim_{s\to 0^+} \zeta(s)$ exists and is finite. For $\zeta\in\Had{n}{n}$, we set $\zeta(0)\!:=\lim_{s\to 0^+} \zeta(s)$ and identify $\zeta$ with the corresponding element of $C_c([0,\infty))$. 

\begin{theorem}[\!\cite{Colesanti-Ludwig-Mussnig-5}, Theorem 1.2]
\label{thm:existence_singular_hessian_vals}
For $0\leq j \leq n$ and $\zeta\in\Had{j}{n}$, there exists a unique, continuous, epi-translation and rotation invariant valuation $\oZZ{j}{\zeta}\colon\fconvs\to\R$ such that
\begin{equation}
\label{eq:rep_ozz_c2}
\oZZ{j}{\zeta}(u)=\int_{\R^n} \zeta(|\nabla u(x)|)\big[\Hess u(x)\big]_{n-j} \d x
\end{equation}
for every $u\in\fconvs\cap C_+^2(\R^n)$.
\end{theorem}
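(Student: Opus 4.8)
The plan is to obtain uniqueness by a density argument and existence by approximating the singular density $\zeta$ by compactly supported ones, for which the associated valuation is already available.

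\textbf{Uniqueness.} The set $\fconvs\cap C_+^2(\R^n)$ is dense in $\fconvs$ with respect to epi-convergence (for instance, one mollifies, adds a small multiple of $|x|^2$, and applies a Moreau--Yosida regularization). Hence a functional as in the statement is prescribed on a dense subset by \eqref{eq:rep_ozz_c2}, and, being continuous with respect to epi-convergence, it is uniquely determined on all of $\fconvs$.

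\textbf{Reduction to Hessian measures of the conjugate.} For $u\in\fconvs\cap C_+^2(\R^n)$ the gradient map $\nabla u$ is a $C^1$-diffeomorphism of $\R^n$ onto $\interno\dom u^*=\R^n$ with inverse $\nabla u^*$ and $\Hess u(x)=(\Hess u^*(\nabla u(x)))^{-1}$. Changing variables $y=\nabla u(x)$, so that $\d x=\det\Hess u^*(y)\,\d y$, and using the elementary identity $[A^{-1}]_{n-j}\det A=[A]_{j}$ for positive definite $A$, one gets
\[
\int_{\R^n}\zeta(|\nabla u(x)|)\,[\Hess u(x)]_{n-j}\,\d x=\int_{\R^n}\zeta(|y|)\,[\Hess u^*(y)]_{j}\,\d y=\int_{\R^n}\zeta(|y|)\,\d\hd{j}(u^*;y),
\]
the integral of $\zeta(|\cdot|)$ against the $j$-th Hessian measure of $u^*$. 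Since conjugation is an epi-continuous bijection of $\fconvs$ onto $\fconvf$ and $v\mapsto\hd{j}(v;\cdot)$ is weakly continuous, it suffices to show that $v\mapsto\int_{\R^n}\zeta(|y|)\,\d\hd{j}(v;y)$ is a well-defined, continuous, epi-translation and rotation invariant valuation on $\fconvf$; for $\zeta$ that can be identified with an element of $C_c([0,\infty))$ this follows from \cite{Colesanti-Ludwig-Mussnig-4} via the above change of variables.

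\textbf{From compactly supported to singular densities.} Let now $0\le j\le n-1$ and $\zeta\in\Had{j}{n}$. I would choose $\zeta_k\in C_c([0,\infty))$ with $\zeta_k=\zeta$ on $[1/k,\infty)$ and a controlled extension on $[0,1/k)$, say $\zeta_k\equiv\zeta(1/k)$ there, and set $\oZZ{j}{\zeta}(u):=\lim_{k\to\infty}\oZZ{j}{\zeta_k}(u)$. The crux is a non-concentration estimate for the Hessian measure $\hd{j}(v;\cdot)$, $v\in\fconvf$, near the origin: that $\hd{j}(v;t\Bn)$ is of order $t^{n-j}$ as $t\to 0^+$, with the bound uniform along epi-convergent sequences. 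Combining this with the two defining conditions of $\Had{j}{n}$ --- $\lim_{s\to 0^+}s^{n-j}\zeta(s)=0$, which annihilates the boundary term $|\zeta(t)|\,\hd{j}(v;t\Bn)$ in an integration by parts at $t=0$, and the convergence of $\int_s^\infty t^{n-j-1}\zeta(t)\,\d t$, which controls the leftover borderline (logarithmic) growth --- one gets $\int_{\delta\Bn}|\zeta(|y|)|\,\d\hd{j}(v;y)<\infty$ and that $(\oZZ{j}{\zeta_k}(u))_k$ is Cauchy, uniformly on any epi-convergent sequence. This makes the limit independent of the chosen $\zeta_k$, equal to \eqref{eq:rep_ozz_c2} on $\fconvs\cap C_+^2(\R^n)$, and continuous; the valuation property and the epi-translation and rotation invariance pass to the limit from the $\oZZ{j}{\zeta_k}$.

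\textbf{Main obstacle.} The hard part is exactly the uniform non-concentration estimate for $\hd{j}(v;\cdot)$ near the relevant point and its precise matching with the (sharp) conditions defining $\Had{j}{n}$; even the order $t^{n-j}$ of the mass is not obvious for $1\le j\le n-1$ and seems to require monotonicity properties of mixed Monge--Amp\`ere (equivalently, Hessian) measures, which is where super-coercivity enters, guaranteeing $0\in\dom u^*$. The diffeomorphism change of variables, the algebraic identity for elementary symmetric functions, and the transfer of the structural properties through pointwise limits are then routine.
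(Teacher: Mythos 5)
Your uniqueness argument and the reduction to $\fconvf$ via the change of variables $y=\nabla u(x)$, $[A^{-1}]_{n-j}\det A=[A]_j$, and $\Phi_j(u^*;\cdot)$ are correct and agree with the paper's duality setup. From there, however, you take a genuinely different route, and the route has a real gap precisely where you flag one.

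You propose to extend $\zeta\mapsto\int\zeta(|y|)\,\d\Phi_j(v;y)$ from $\zeta\in C_c([0,\infty))$ to $\zeta\in\Had{j}{n}$ by truncation and a limiting argument, pivoting on the non-concentration estimate $\Phi_j(v;t\Bn)=O(t^{n-j})$, required to hold uniformly along epi-convergent sequences. This estimate is sharp (equality in order for $v=h_{\Bn}$) and plausible, but it is a substantial, unproven assertion; the uniformity along sequences is especially delicate, since weak convergence of $\Phi_j(v_k;\cdot)$ says nothing quantitative about how much mass sits near the origin along the sequence. Without it you cannot conclude that $(\oZZ{j}{\zeta_k})_k$ is uniformly Cauchy, so neither well-definedness nor continuity of the limit follows. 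The ``precise matching'' with the two defining conditions of $\Had{j}{n}$ is also nontrivial: the vanishing boundary-term condition $\lim_{s\to0^+}s^{n-j}\zeta(s)=0$ only kills that term if you already know the $O(t^{n-j})$ bound, and the second condition alone does not control the truncation error.

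The paper circumvents this entirely by a change of measure rather than a limit in the density. Writing $\alpha:=\binom nj\,\cR^{n-j}\zeta$, Lemma~\ref{le:r_kln} shows $\alpha\in\Had{n}{n}=C_c([0,\infty))$, i.e.\ the transform $\cR^{n-j}$ trades the singular $\zeta$ for a bounded, compactly supported $\alpha$. The functional is then defined outright as
\[
\oZ(v):=\int_{\R^n}\alpha(|x|)\,\d\nhd{j}(v;x),\qquad \nhd{j}(v;\cdot)=\MA\bigl(v[j],h_{\Bn}[n-j];\cdot\bigr),
\]
which is manifestly a finite integral against a Radon measure, and the continuity, valuation and invariance properties are immediate from Proposition~\ref{prop:mixed_hessian_vals} and Theorem~\ref{properties of nhm}. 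What you would have to prove by the concentration estimate is instead proved by Proposition~\ref{prop:transform_ints}, an exact identity
\[
\int_{\R^n}\zeta(|x|)\,[\Hess v(x)]_j\,\d x
=\binom nj\int_{\R^n}\cR^{n-j}\zeta(|x|)\,\det\bigl(\Hess v(x)[j],\Hess h_{\Bn}(x)[n-j]\bigr)\d x
\]
for $v\in C^2(\R^n)$, whose proof is integration by parts (the Reilly-type Lemmas~\ref{le:sum_diff_c_ij_zero}--\ref{le:int_mixed_dis_zero}). The singularity of $\zeta$ at the origin is absorbed by the $|x|^{-1}$-singularity of $\Hess h_{\Bn}$ inside the mixed discriminant, iterated $(n-j)$ times; this replaces the analytic estimate you would need with an algebraic cancellation. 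In short: your approach keeps the measure $\Phi_j(v;\cdot)$ fixed and tries to tame the density, which forces a hard, uniform, quantitative estimate that you have not supplied; the paper tames the density once and for all with $\cR^{n-j}$ and changes the measure to $\nhd{j}(v;\cdot)$, for which everything is automatically finite. If you want to rescue your route, you would have to actually prove the uniform $O(t^{n-j})$ bound and its stability along epi-convergent sequences --- essentially a quantitative version of weak continuity of $\Phi_j$ near the origin --- and this is not contained in what you wrote.
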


\noindent
Here, we say that $\oZ \colon \fconvs\to\R$ is a \emph{valuation} if
$$\oZ(u)+\oZ(v)=\oZ(u\vee v)+\oZ(u\wedge v)$$
for every $u,v\in \fconvs$ such that also their pointwise maximum $u\vee v$ and minimum $u\wedge v$ belong to $\fconvs$. A valuation $\oZ\colon\fconvs\to\R$ is said to be \emph{epi-translation invariant} if $\oZ(u\circ \tau^{-1}+\gamma)=\oZ(u)$ for every translation $\tau$ on $\R^n$, every $\gamma\in\R$ and every $u\in \fconvs$ and it is \emph{rotation invariant} if $\oZ(u\circ \vartheta^{-1})=\oZ(u)$ for every $\vartheta\in\SO(n)$ and $u\in\fconvs$. We remark that these properties are natural extensions of the corresponding properties of the classical intrinsic volumes.

A closed representation of the extensions of \eqref{eq:rep_ozz_c2} to $\fconvs$ was obtained for the cases $j=0$ and $j=n$. For $\zeta\in\Had{0}{n}$, the functional $\oZZ{0}{\zeta}$ is a constant, independent of $u\in\fconvs$, and for $\zeta\in\Had{n}{n}$, we have
\begin{equation}
\label{eq:n-hom}
\oZZ{n}{\zeta}(u)=\int_{\dom u} \zeta(|\nabla u(x)|)\d x
\end{equation}
for every $u\in\fconvs$, where $\dom u\!:=\{x\in\R^n\colon u(x)<\infty\}$ is the \emph{domain} of $u$ (see \cite[Theorem 2]{Colesanti-Ludwig-Mussnig-4}). However, apart from these extremal cases, the functionals $\oZZ{j}{\zeta}$ were so far only described as continuous extensions of \eqref{eq:rep_ozz_c2} and by  Cauchy--Kubota formulas, which were recently established in \cite[Theorem 1.6]{Colesanti-Ludwig-Mussnig-6}.

\goodbreak
In \cite{Colesanti-Ludwig-Mussnig-5}, the following functional Hadwiger theorem  was established. 
\begin{theorem}[\!\!\cite{Colesanti-Ludwig-Mussnig-5}, Theorem 1.3]
\label{thm:hadwiger_convex_functions}
A functional $\oZ\colon\fconvs \to \R$ is a continuous, epi-translation and rotation invariant valuation if and only if there exist functions $\zeta_0\in\Had{0}{n}$, \dots, $\zeta_n\in\Had{n}{n}$  such that
\begin{equation*}
\oZ(u)= \sum_{j=0}^n \oZZ{j}{\zeta_j}(u) 
\end{equation*}
for every $u\in\fconvs$.
\end{theorem}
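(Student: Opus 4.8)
The \emph{sufficiency} is immediate from Theorem~\ref{thm:existence_singular_hessian_vals}: for $0\le j\le n$ and $\zeta_j\in\Had jn$, each $\oZZ j{\zeta_j}$ is a continuous, epi-translation and rotation invariant valuation, and a finite linear combination of such functionals retains all three properties. The content of the statement is the \emph{necessity}. The plan is to combine a homogeneous decomposition with a degree-by-degree classification, the extreme degrees being treated by hand and the intermediate degrees being reduced to the classical Hadwiger Theorem~\ref{hugo}.

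For $\lambda>0$ and $u\in\fconvs$, set $u_\lambda(x):=\lambda\,u(x/\lambda)$; then $u_\lambda\in\fconvs$ (its epigraph is $\lambda$ times the epigraph of $u$), and a valuation is said to be epi-homogeneous of degree $i$ if $\oZ(u_\lambda)=\lambda^i\oZ(u)$ for all $\lambda>0$. By the McMullen-type homogeneous decomposition for continuous, epi-translation invariant valuations on $\fconvs$ (established in the earlier parts of this series), $\oZ$ decomposes uniquely as $\oZ=\sum_{i=0}^n\oZ_i$, where each $\oZ_i$ is a continuous, epi-translation invariant valuation that is epi-homogeneous of degree $i$. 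Since this decomposition is canonical and rotations commute with translations and with the rescaling $u\mapsto u_\lambda$, each $\oZ_i$ is again rotation invariant. It therefore suffices to show: a continuous, epi-translation and rotation invariant valuation on $\fconvs$ that is epi-homogeneous of degree $i$, for some $0\le i\le n$, equals $\oZZ i\zeta$ for some $\zeta\in\Had in$.

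The extreme degrees are settled directly. For $i=0$: letting $\lambda\to0^+$, super-coercivity forces $u_\lambda$ to epi-converge, for every $u\in\fconvs$, to the function that is $0$ at the origin and $+\infty$ elsewhere, so by continuity $\oZ$ is constant, and every constant equals $\oZZ 0{\zeta_0}$ for a suitable $\zeta_0\in\Had 0n$. For $i=n$: one invokes the known classification of $n$-homogeneous valuations together with the representation \eqref{eq:n-hom} to conclude that $\oZ$ has the form $u\mapsto\int_{\dom u}\zeta(|\nabla u(x)|)\d x$ with $\zeta\in\Had nn$, i.e.\ $\oZ=\oZZ n\zeta$. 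The substance lies in the intermediate degrees $1\le i\le n-1$. Here the plan is: (a) by continuity with respect to epi-convergence it suffices to pin down $\oZ$ on the dense subclass of $\fconvs$ whose members have polyhedral epigraph; (b) on a suitably parametrized family within this subclass, and after fixing the length $s\in(0,\infty)$ of the gradient, $\oZ$ induces a continuous, translation and rotation invariant valuation on convex bodies, which by Theorem~\ref{hugo} equals $\sum_j\zeta_j(s)V_j$; epi-homogeneity of degree $i$ and the behaviour of $\oZ$ under $u\mapsto u_\lambda$ then isolate the single index $j=i$ and yield a density $\zeta=\zeta_i$ depending on the gradient length alone; (c) the qualitative continuity and finiteness of $\oZ$ force $\zeta$ to obey the vanishing and integrability conditions defining $\Had in$; and (d) one verifies that $\oZ$ and $\oZZ i\zeta$ agree on the polyhedral subclass, hence on all of $\fconvs$, using that both are continuous and that $\oZZ i\zeta$ is the unique continuous extension of \eqref{eq:rep_ozz_c2}.

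I expect the intermediate-degree case — steps (c) and (d) above — to be the main obstacle. On the one hand, for (b)--(c) one must design the probe family so that the dependence of $\oZ$ on the gradient length cleanly decouples from its dependence on the domain, and then upgrade the merely qualitative continuity of $\oZ$ to membership of the extracted density in the precise class $\Had in$; the delicate point is the behaviour as $s\to0^+$, which is exactly where admissible densities may be singular and where distinct homogeneous degrees are told apart. On the other hand, for (d) one must control epi-convergence simultaneously with the possibly singular integrands in \eqref{eq:rep_ozz_c2}, so the passage from polyhedral functions to arbitrary $u\in\fconvs$ — and the preliminary reduction from $C_+^2(\R^n)$ representatives to polyhedral ones — has to use the continuity assertion of Theorem~\ref{thm:existence_singular_hessian_vals} in an essential way, not merely formally.
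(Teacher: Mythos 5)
The theorem is only \emph{cited} in the present paper (it is Theorem~1.3 of the first part of the series), so there is no in-paper proof to compare against; your sketch must therefore be measured on its own terms and against the cited paper's argument. The sufficiency direction, the homogeneous decomposition, and your treatments of the extremal degrees $i=0$ and $i=n$ are all fine. The genuine gap lies in step~(b). The natural fixed-gradient-length probe $K\mapsto\oZ(s\,h_{\Bn}+\ind_K)$ is indeed a valuation in $K$ and is rotation invariant, but it is \emph{not} translation invariant: by epi-translation invariance of $\oZ$ one has $\oZ(s\,h_{\Bn}+\ind_K)=\oZ\bigl(s\,|\cdot-y_0|+\ind_{K+y_0}\bigr)$, whereas $\oZ\bigl(s\,h_{\Bn}+\ind_{K+y_0}\bigr)$ keeps the cone vertex anchored at the origin rather than letting it travel with $K$, and these two values need not coincide. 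Hence Theorem~\ref{hugo} cannot be invoked for this family. If you repair translation invariance by replacing the cone with the distance function, say $K\mapsto\oZ\bigl(s\,d_K+\ind_{K+r\Bn}\bigr)$ with $d_K(x):=\min_{y\in K}|x-y|$, you do get a translation and rotation invariant valuation on $\cK^n$ for every $s,r>0$, but then the epi-homogeneity of $\oZ$ no longer isolates the index $j=i$: the operation $\lambda\sq$ rescales $K$ \emph{and} $r$ simultaneously, so in the Hadwiger expansion $\sum_j c_j(s,r)V_j(K)$ every index $j\le i$ survives, not just $j=i$. Either way, step~(b) as written fails. Step~(d) is a further leap: neither probe family is anywhere near dense among polyhedral functions, so determining $\oZ$ on it does not determine $\oZ$ on $\fconvs$ without a substantial additional argument exploiting the valuation property.

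For the record, the proof in the cited paper does begin with the homogeneous decomposition and does evaluate on the test functions $u_t=t\,h_{\Bn}+\ind_{\Bn}$ (your $u_t$ here appears in Lemma~\ref{le:calc_ind_bn_tx_theta_i}) and on $\ind_K$ (Proposition~\ref{prop:retrieve_intrinsic_volumes}); but the heart of the argument for $1\le i\le n-1$ is an induction on the ambient dimension together with a reduction to simple valuations and a Klain-type uniqueness step, not a direct application of the classical Hadwiger theorem to a one-parameter probe family. So your route is both genuinely different and, at step~(b), genuinely broken; the extraction of a density $\zeta$ depending only on the gradient length requires a mechanism other than the one you propose.
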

\noindent
Using the notion of epi-homogeneity of degree $j$ (see Subsection~\ref{ss:convex functions}), we see that Theorems \ref{hugo} and \ref{thm:hadwiger_convex_functions} imply that for $0\le j\le n$, the functionals $\oZZ{j}{\zeta}$ for $\zeta\in\Had{j}{n}$ correspond to multiples of the classical intrinsic volumes $V_j$. Hence, we call  $\oZZ{j}{\zeta}$ for $0\le j\le n$ and $\zeta\in\Had{j}{n}$ a $j$th \emph{functional intrinsic volume}. Moreover,  the family
$\{\oZZ{j}{\zeta}\colon \zeta\in\Had{j}{n}\}$
describes all continuous, epi-translation and rotation invariant valuations on $\fconvs$ that are epi-homogeneous of degree $j$ and is, in this sense, canonical.

We remark that the classification of valuations on function spaces has only been started to be studied recently. The first classification results for valuations on classical function spaces were obtained for $L_p$ and Sobolev spaces, and  for Lipschitz and continuous functions (see  \cite{Tsang:Lp, Ludwig:SobVal,Ludwig:Fisher,ColesantiPagniniTradaceteVillanueva,ColesantiPagniniTradaceteVillanueva2021, Villanueva2016}). Results on valuations on convex functions can be found in \cite{ColesantiLudwigMussnig17,ColesantiLudwigMussnig,Mussnig19, Mussnig21,Alesker_cf,Colesanti-Ludwig-Mussnig-4, Knoerr1, Knoerr2}. 

In this article we present a new, complete family of Steiner formulas for functional intrinsic volumes and its applications. 
For $\zeta\in\Had{n}{n}$ (or equivalently, $\zeta\in C_c([0,\infty))$), the \emph{functional Steiner formula} is the following result.
\begin{theorem}
\label{thm_steiner_functions}
If $\zeta\in\Had{n}{n}$, then
\begin{equation}
\label{steiner_function}
\oZZ{n}{\zeta}(u\infconv(r\sq \ind_{\Bn})) = \sum_{j=0}^n r^{n-j} \kappa_{n-j} \oZZ{j}{\zeta_j}(u)
\end{equation}
for every $u\in\fconvs$ and $r>0$, where $\zeta_j\in \Had{j}{n}$ is given by
\begin{equation}
\label{eq:steiner_function_transform}
\zeta_j(s):=\frac{1}{\kappa_{n-j}} \left(\frac{\zeta(s)}{s^{n-j}} - (n-j) \int_s^{\infty} \frac{\zeta(t)}{t^{n-j+1}} \d t \right)
\end{equation}
for $s>0$ and $0\leq j \leq n$.
\end{theorem}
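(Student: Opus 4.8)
The plan is to establish the Steiner formula first for smooth functions $u\in\fconvs\cap C_+^2(\R^n)$, where the functional intrinsic volumes admit the integral representation \eqref{eq:rep_ozz_c2}, and then to extend to all of $\fconvs$ by continuity with respect to epi-convergence. For the smooth case, the key observation is that $u\infconv(r\sq\ind_{\Bn})$ has a very explicit description: the infimal convolution with $r\sq\ind_{\Bn}$ corresponds on the level of conjugates to adding $r$ times the support function of $\Bn$, i.e.\ $(u\infconv(r\sq\ind_{\Bn}))^*=u^*+r\,|\cdot|$, which geometrically means that the subdifferential (here, gradient) map gets composed with an outer parallel operation. Concretely, writing $v_r:=u\infconv(r\sq\ind_{\Bn})$, one has $\nabla v_r = \nabla u\circ \psi_r$ where $\psi_r$ displaces each point by $r$ in the direction $\nabla u$, so that $v_r$ is again smooth (or can be handled by a regularization argument) and its gradient image is $\nabla u(\R^n)$.

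The main computation is then a change of variables. By \eqref{eq:n-hom} applied to $v_r$,
\[
\oZZ{n}{\zeta}(v_r)=\int_{\dom v_r}\zeta(|\nabla v_r(x)|)\d x,
\]
and I would push this integral forward under the gradient map. Using the coarea-type / Monge--Amp\`ere change of variables, integration against $\d x$ in the $v_r$ picture becomes integration against $[\Hess u(y)]_n\,\d y$ composed with the parallel displacement; expanding the Jacobian of $\psi_r$ produces exactly the polynomial $\sum_{k=0}^n r^{n-k}\binom{n}{k}$-type terms with $[\Hess u]_k$. After collecting terms one arrives at
\[
\oZZ{n}{\zeta}(v_r)=\sum_{j=0}^n r^{n-j}\int_{\R^n} \hat\zeta_j(|\nabla u(y)|)\,[\Hess u(y)]_{n-j}\d y
\]
for suitable densities $\hat\zeta_j$. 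Identifying $\hat\zeta_j$ with $\kappa_{n-j}\zeta_j$ for $\zeta_j$ as in \eqref{eq:steiner_function_transform} is the crux: the factor $1/t^{n-j+1}$ and the tail integral $\int_s^\infty$ arise from integrating the radial part of the ball contribution (the $\kappa_{n-j}$ is the volume of the relevant ball slice), and one checks directly that this $\zeta_j$ lies in $\Had{j}{n}$, i.e.\ that $\lim_{s\to0^+}s^{n-j}\zeta_j(s)=0$ and $\lim_{s\to 0^+}\int_s^\infty t^{n-j-1}\zeta_j(t)\d t$ exists — both follow from $\zeta\in C_c([0,\infty))$ by elementary estimates and an application of Fubini to the iterated integral. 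By Theorem~\ref{thm:existence_singular_hessian_vals}, $\oZZ{j}{\zeta_j}$ is the unique continuous valuation with density $\zeta_j$, so the right-hand side of \eqref{steiner_function} agrees with $\sum_j r^{n-j}\kappa_{n-j}\oZZ{j}{\zeta_j}(u)$ on $\fconvs\cap C_+^2(\R^n)$.

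Finally I would remove the smoothness assumption. The map $u\mapsto u\infconv(r\sq\ind_{\Bn})$ is continuous with respect to epi-convergence (this is standard for infimal convolution with a fixed super-coercive function, and $r\sq\ind_{\Bn}\in\fconvs$), and $\oZZ{n}{\zeta}$ is continuous on $\fconvs$ by Theorem~\ref{thm:existence_singular_hessian_vals}; likewise each $\oZZ{j}{\zeta_j}$ is continuous. Since $\fconvs\cap C_+^2(\R^n)$ is dense in $\fconvs$ in the epi-topology, the identity \eqref{steiner_function}, valid for each fixed $r>0$ on the dense subset, extends to all of $\fconvs$. I expect the main obstacle to be the change-of-variables step in the smooth case — in particular, justifying the Jacobian expansion when $\Hess u$ is merely positive semidefinite on part of $\R^n$ (so that the parallel map may fail to be a diffeomorphism), and matching the resulting tail-integral density precisely to \eqref{eq:steiner_function_transform}; a clean way around the degeneracy is to first prove the formula for $u$ with $\Hess u>0$ everywhere and then use continuity, or to work throughout with Hessian measures rather than pointwise Hessians.
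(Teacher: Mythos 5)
Your outline — expand $\oZZ{n}{\zeta}(u\infconv(r\sq\ind_{\Bn}))$ via a change of variables, collect powers of $r$, identify the coefficients, then extend by continuity — is sound at the level of strategy and is in fact closely related to the paper's direct proof, which works on the dual side by expanding $\MA(u^*+r\,h_{\Bn};\cdot)$ in powers of $r$ via the Steiner-type formula \eqref{eq:steiner_formula_maj}. The Jacobian expansion of your displacement map $T_r(x)=x+r\,\nabla u(x)/|\nabla u(x)|$ is also carried out in the paper (Lemma~\ref{Jacobian lemma}), and yields $\det(\D T_r)=\sum_{k}r^k\tau_k(u,x)$, so the $j$th coefficient you land on is $\int \zeta(|\nabla u|)\,\tau_{n-j}(u,x)\,\d x$, or equivalently $\binom{n}{j}\int\zeta(|y|)\det(\Hess u^*(y)[j],\Hess h_{\Bn}(y)[n-j])\,\d y$.

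The genuine gap is what happens next. You assert that ``after collecting terms one arrives at'' $\sum_j r^{n-j}\int\hat\zeta_j(|\nabla u|)[\Hess u]_{n-j}\,\d x$, and that matching $\hat\zeta_j$ with $\kappa_{n-j}\zeta_j$ follows from ``elementary estimates and an application of Fubini.'' This is not correct. The Jacobian expansion produces $\tau_{n-j}(u,x)$, the elementary symmetric function of the principal curvatures of the sublevel set through $x$, which is \emph{not} pointwise equal to any rescaling of $[\Hess u(x)]_{n-j}$; the two invariants genuinely differ, and the identity
$$\int_{\R^n}\zeta(|x|)[\Hess v(x)]_j\,\d x=\binom{n}{j}\int_{\R^n}\cR^{n-j}\zeta(|x|)\det(\Hess v(x)[j],\Hess h_{\Bn}(x)[n-j])\,\d x$$
that produces the transform $\zeta\mapsto\zeta_j$ holds only after integration. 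In the paper this is Proposition~\ref{prop:transform_ints}, and proving it requires the Reilly divergence identity (Lemma~\ref{le:sum_diff_c_ij_zero}), the resulting integration-by-parts symmetry (Lemma~\ref{le:change_int_mixed_dis}), and a careful two-stage approximation (Lemma~\ref{le:int_mixed_dis_r_transf}, Proposition~\ref{prop:mixed_int_r_transf}) to handle the $1/|x|$ singularity of $\Hess h_{\Bn}$ at the origin. None of this is a Fubini computation, and the point you flag as the only obstacle (degeneracy of $\Hess u$, which is indeed handled by density and continuity) is not where the real difficulty lies. As written, the identification of the coefficients — which is the actual content of the theorem — is left unproved.

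You may also want to compare with the paper's alternate proof (Subsection~\ref{ss:alternate_proof_of_steiner}): there the coefficient identification is achieved without Reilly-type machinery, by invoking Theorem~\ref{thm:hadwiger_convex_functions} to get a polynomial expansion a priori and then evaluating on the test functions $u_t(x)=t|x|+\ind_{\Bn}(x)$, where everything can be computed by hand. If you want to avoid the integration-by-parts argument, that evaluation-at-test-functions route is the elementary alternative, at the cost of using the functional Hadwiger theorem as input.
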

\noindent
Here, $u\infconv w$ is the infimal convolution of $u,w\in\fconvs$ and $r\sq w$ is obtained by epi-multiplication of $w$ with $r>0$ while  $\ind_{\Bn}$ is the convex indicator function of the Euclidean unit ball $\Bn$ (see Subsection~\ref{ss:convex functions} for the precise definitions). Note that
\begin{equation}\label{epi-add}
\epi (u\infconv(r\sq \ind_{\Bn}))= \epi u +r (\Bn\times \R),
\end{equation}
where $\epi w\!:=\{(x,t)\in\R^n\times\R: t\ge w(x)\}$ is the epi-graph of $w\colon \R^n\to (-\infty,+\infty]$ and the addition on the right side of \eqref{epi-add} is Minkowski addition in $\R^n\times \R$.

\goodbreak
We give two proofs of Theorem \ref{thm_steiner_functions}. In Section \ref{se:proofs}, we give a direct proof (not using the functional Hadwiger theorem, Theorem \ref{thm:hadwiger_convex_functions}) and in Section \ref{additional results}, we prove Theorem \ref{thm_steiner_functions} using Theorem \ref{thm:hadwiger_convex_functions}. This corresponds to the fact that the classical Steiner formula can be proved both directly and as a consequence of the Hadwiger theorem.

Equation \eqref{steiner_function}  corresponds to the classical Steiner formula \eqref{steiner}.
Indeed, we will see that \eqref{steiner} can be easily retrieved from \eqref{steiner_function}. Furthermore, by properties of the transform \eqref{eq:steiner_function_transform}, every functional intrinsic volume $\oZZ{j}{\zeta_j}$ for $1\leq j\leq n$ and $\zeta_j\in\Had{j}{n}$ will appear exactly once on the right side of \eqref{steiner_function} as $\zeta$ ranges in $\Had{n}{n}$. 
In this sense, Theorem~\ref{thm_steiner_functions} provides a complete description of functional intrinsic volumes on $\fconvs$. 
We remark that Steiner formulas for convex functions are also obtained if we replace $\ind_{\Bn}$ in \eqref{steiner_function} by other radially symmetric, super-coercive, convex functions. However, in general such formulas do not give rise to all functional intrinsic volumes. For more details, see Subsection~\ref{ss:further_steiner_formulas}.

\goodbreak
As an immediate consequence of Theorem~\ref{thm_steiner_functions}, equation \eqref{eq:n-hom} and properties of the transform \eqref{eq:steiner_function_transform} (see Lemma~\ref{le:r_kln}), we obtain the following new representation of the functionals $\oZZ{j}{\zeta}$.

\begin{corollary}
\label{co:def_steiner}
If $\,0\leq j < n$ and $\zeta\in\Had{j}{n}$, then
\begin{align*}
\oZZ{j}{\zeta}(u) &= \frac{j!}{n!} \frac{\d^{n-j}}{\d r^{n-j}} \Big\vert_{r=0} \oZZ{n}{\alpha}(u\infconv(r \sq \ind_{\Bn}))\\
&= \frac{j!}{n!} \frac{\d^{n-j}}{\d r^{n-j}}\Big\vert_{r=0} \int_{\dom (u \infconv (r \sq \ind_{\Bn}))} \alpha\left(\big\vert \nabla\big(u \infconv (r \sq \ind_{\Bn}) \big)\big\vert\right) \d x
\end{align*}
for every $u\in\fconvs$, where $\alpha\in C_c([0,\infty))$ is given by
$$\alpha(s):= \binom{n}{j} \Big( s^{n-j} \zeta(s) + (n-j) \int_s^{\infty} t^{n-j-1} \zeta(t) \d t \Big)$$
for $s>0$.
\end{corollary}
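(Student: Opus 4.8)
The plan is to read the corollary off from the functional Steiner formula of Theorem~\ref{thm_steiner_functions}: apply that theorem with $\alpha$ in place of $\zeta$, extract the coefficient of $r^{n-j}$ by differentiating in $r$, and use the properties of the transform \eqref{eq:steiner_function_transform} recorded in Lemma~\ref{le:r_kln} to identify this coefficient with $\oZZ{j}{\zeta}$. The second equality in the corollary is then simply \eqref{eq:n-hom}.

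First I would check that $\alpha\in\Had{n}{n}$, so that Theorem~\ref{thm_steiner_functions} and \eqref{eq:n-hom} apply to it. Since $\zeta\in\Had{j}{n}$ with $j\le n-1$, one has $s^{n-j}\zeta(s)\to 0$ and $(n-j)\int_s^\infty t^{n-j-1}\zeta(t)\d t$ converges to a finite limit as $s\to 0^+$; hence $\lim_{s\to 0^+}\alpha(s)$ exists and is finite. As $\zeta$ has bounded support, so does $\alpha$, and therefore $\alpha$ is identified with an element of $C_c([0,\infty))$.

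Applying Theorem~\ref{thm_steiner_functions} with $\alpha$ in place of $\zeta$, we obtain, for $u\in\fconvs$ and $r>0$,
\[
\oZZ{n}{\alpha}(u\infconv(r\sq\ind_{\Bn}))=\sum_{k=0}^{n}r^{n-k}\kappa_{n-k}\,\oZZ{k}{\alpha_k}(u),
\]
where $\alpha_k\in\Had{k}{n}$ denotes the transform \eqref{eq:steiner_function_transform} of $\alpha$. The right-hand side is (the restriction to $r>0$ of) a polynomial in $r$ of degree at most $n$. Differentiating it $n-j$ times and setting $r=0$ annihilates every summand with $k\ne j$: for $k>j$ the monomial $r^{n-k}$ has degree $n-k<n-j$, so its $(n-j)$-th derivative is identically zero; for $k<j$ the $(n-j)$-th derivative of $r^{n-k}$ still contains the factor $r^{j-k}$, which vanishes at $r=0$. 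Only the summand $k=j$ survives, contributing $(n-j)!\,\kappa_{n-j}\,\oZZ{j}{\alpha_j}(u)$. Hence the first expression in the corollary equals $\tfrac{j!\,(n-j)!}{n!}\,\kappa_{n-j}\,\oZZ{j}{\alpha_j}(u)=\tfrac{\kappa_{n-j}}{\binom{n}{j}}\,\oZZ{j}{\alpha_j}(u)$.

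It remains to identify $\alpha_j$, which is the only place a genuine computation enters. Substituting the definition of $\alpha$ into \eqref{eq:steiner_function_transform}, a short integration by parts in the iterated integral $\int_s^\infty t^{-(n-j)-1}\big(\int_t^\infty\tau^{n-j-1}\zeta(\tau)\d\tau\big)\,\d t$ (the boundary term at $\infty$ vanishing since $\zeta$ has bounded support) shows, after cancellation, that
\[
\frac{\alpha(s)}{s^{n-j}}-(n-j)\int_s^\infty\frac{\alpha(t)}{t^{n-j+1}}\d t=\binom{n}{j}\zeta(s),
\]
so that $\alpha_j=\binom{n}{j}\zeta/\kappa_{n-j}$; this is exactly the property of the transform \eqref{eq:steiner_function_transform} provided by Lemma~\ref{le:r_kln}. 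Since $\oZZ{j}{\cdot}$ is linear in its density (being the unique continuous extension of the functional in \eqref{eq:rep_ozz_c2}, which is linear in $\zeta$), we conclude $\tfrac{\kappa_{n-j}}{\binom{n}{j}}\,\oZZ{j}{\alpha_j}(u)=\oZZ{j}{\zeta}(u)$, the first equality of the corollary. For the second equality, apply \eqref{eq:n-hom} to $v:=u\infconv(r\sq\ind_{\Bn})\in\fconvs$, which rewrites $\oZZ{n}{\alpha}(v)$ as the stated integral over $\dom v$. The main --- and essentially only --- obstacle is the integration-by-parts identity displayed above; everything else is polynomial bookkeeping in $r$ together with linearity in the density.
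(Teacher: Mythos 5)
Your proposal is correct and takes exactly the route the paper intends: the corollary is stated as an immediate consequence of Theorem~\ref{thm_steiner_functions}, equation \eqref{eq:n-hom}, and Lemma~\ref{le:r_kln}, and you have simply filled in the polynomial bookkeeping and the identification $\alpha_j=\binom{n}{j}\zeta/\kappa_{n-j}$ via $\cR^{-(n-j)}\circ\cR^{n-j}=\mathrm{id}$ on $\Had{j}{n}$. The only cosmetic difference is that you re-derive the inversion identity by an explicit integration by parts rather than invoking the bijectivity statement of Lemma~\ref{le:r_kln} directly, but you note this equivalence yourself, so the argument is the same in substance.
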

\goodbreak

Using a new family of measures, we establish new closed representations of the functional intrinsic volumes on the whole space $\fconvs$ that do not require singular densities. For $u\in\fconvs$, let $\MAp(u;\cdot)$ be the push-forward through  $\nabla u$  of $n$-dimensional Lebesgue measure restricted to the domain of $u$. Equivalently, $\MAp(u;\cdot)$ is the Monge--Amp\`ere measure of the convex conjugate of $u$ (see Section~\ref{se:conjugate_mma_measures} for details) and we call it the \emph{conjugate Monge--Amp\`ere measure} of $u$. For functions $u_1,\dots,u_n\in\fconvs$, we write $\MAp(u_1,\dots,u_n;\cdot)$ for the polarization of $\MAp(u;\cdot)$ with respect to infimal convolution (see Section~\ref{se:conjugate_mma_measures}) and call $\MAp(u_1,\dots,u_n;\cdot)$ the \emph{conjugate mixed Monge--Amp\`ere measure} of $u_1,\dots,u_n$. For $0\leq j \leq n$ and $u\in\fconvs$, we set
\begin{equation}
\label{eq:def_nhp_j}
\nhp{j}(u;\cdot):=\MAp(u[j],\ind_{\Bn}[n-j];\cdot),
\end{equation}
where the function $u$ is repeated $j$ times and the convex indicator function $\ind_{\Bn}$ is repeated $(n-j)$ times. We establish the following result.

\begin{theorem}
\label{thm:steiner_measures}
If $\,0\leq j \leq n$ and $\zeta\in\Had{j}{n}$, then
$$\oZZ{j}{\zeta}(u)=\int_{\R^n} \alpha(|y|) \d\nhp{j}(u;y)$$
for every $u\in\fconvs$, where $\alpha\in C_c([0,\infty))$ is given by
$$\alpha(s):=\binom{n}{j} \Big(s^{n-j}\zeta(s) + (n-j) \int_s^{\infty}t^{n-j-1} \zeta(t) \d t \Big)$$
for $s>0$. Moreover, for $1\leq j \leq n$,
\begin{equation}
\label{eq:steiner_measures_smooth}
\oZZ{j}{\zeta}(u)=\frac{1}{\binom{n}{j}}\int_{\R^n} \alpha(|\nabla u(x)|)\,\tau_{n-j}(u,x)\d x
\end{equation}
for $u\in\fconvs\cap C^2_+(\R^n)$.
\end{theorem}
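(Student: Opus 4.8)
The plan is to prove Theorem~\ref{thm:steiner_measures} by combining the Steiner formula for functional intrinsic volumes (Theorem~\ref{thm_steiner_functions}) with the Steiner-type expansion of the conjugate Monge--Amp\`ere measure. First I would establish the \emph{measure-valued} Steiner formula: for fixed $u\in\fconvs$ and a continuous test function $\varphi$ on $\R^n$, the quantity $\int_{\R^n}\varphi(y)\d\MAp(u\infconv(r\sq\ind_{\Bn});y)$ should expand as a polynomial in $r$ of degree at most $n$, whose coefficients are exactly the integrals $\binom{n}{j}\int_{\R^n}\varphi(y)\d\nhp{j}(u;y)$ up to the factors $r^{n-j}$. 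This is the natural polarization statement behind \eqref{eq:def_nhp_j}: since $\MAp$ is (after the conjugation identification) the pull-back of Lebesgue measure under a gradient map, its behavior under infimal convolution is multilinear of degree $n$, and $\nhp{j}(u;\cdot)=\MAp(u[j],\ind_{\Bn}[n-j];\cdot)$ is precisely the coefficient one reads off. The cleanest route is to pass to convex conjugates: $(u\infconv(r\sq\ind_{\Bn}))^{*}=u^{*}+r\,h_{\Bn}$ where $h_{\Bn}(y)=|y|$ is the support function of $\Bn$, and then $\MAp(u\infconv(r\sq\ind_{\Bn});\cdot)$ is the Monge--Amp\`ere measure of $u^{*}+r|\cdot|$; expanding the Monge--Amp\`ere operator of a sum into mixed Monge--Amp\`ere measures gives the polynomial in $r$ with the stated coefficients, using the definition of $\MAp(u_1,\dots,u_n;\cdot)$ as the polarization with respect to infimal convolution given in Section~\ref{se:conjugate_mma_measures}.

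Granting this, the first identity of the theorem follows by evaluating Theorem~\ref{thm_steiner_functions} with $\zeta$ replaced by the function $\alpha\in C_c([0,\infty))$. By \eqref{eq:n-hom}, for $\alpha\in\Had{n}{n}$ we have $\oZZ{n}{\alpha}(w)=\int_{\dom w}\alpha(|\nabla w(x)|)\d x=\int_{\R^n}\alpha(|y|)\d\MAp(w;y)$ since $\MAp(w;\cdot)$ is by definition the push-forward of Lebesgue measure on $\dom w$ under $\nabla w$. Applying this with $w=u\infconv(r\sq\ind_{\Bn})$ and using the measure-valued Steiner formula, the left-hand side of \eqref{steiner_function} (with density $\alpha$) becomes $\sum_{j=0}^{n}r^{n-j}\binom{n}{j}\int_{\R^n}\alpha(|y|)\d\nhp{j}(u;y)$. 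On the other hand, Theorem~\ref{thm_steiner_functions} expresses the same quantity as $\sum_{j=0}^{n}r^{n-j}\kappa_{n-j}\oZZ{j}{\alpha_j}(u)$, where $\alpha_j$ is the transform \eqref{eq:steiner_function_transform} of $\alpha$. A direct computation with \eqref{eq:steiner_function_transform} and the explicit formula for $\alpha$ in terms of $\zeta$ shows that $\kappa_{n-j}\alpha_j=\binom{n}{j}^{-1}\,(\text{something})$—more precisely one checks that the transform inverts the map $\zeta\mapsto\alpha$ up to the combinatorial factor, so that $\kappa_{n-j}\,\alpha_j=\binom{n}{j}\,\zeta$ wait—rather, comparing coefficients of the two polynomial identities in $r$, which must hold for all $r>0$, forces $\binom{n}{j}\int_{\R^n}\alpha(|y|)\d\nhp{j}(u;y)=\kappa_{n-j}\,\oZZ{j}{\alpha_j}(u)$; and the explicit relation between $\alpha$ and $\zeta$ (again via Lemma~\ref{le:r_kln} and \eqref{eq:steiner_function_transform}) identifies $\kappa_{n-j}\alpha_j$ with $\binom{n}{j}\zeta$, yielding $\oZZ{j}{\zeta}(u)=\int_{\R^n}\alpha(|y|)\d\nhp{j}(u;y)$. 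Care is needed here to verify that $\alpha$ as defined indeed lies in $\Had{n}{n}=C_c([0,\infty))$: the compact support of $\alpha$ follows from the bounded support of $\zeta$, and finiteness of $\lim_{s\to0^+}\alpha(s)$ follows from the two defining conditions of $\Had{j}{n}$ (the limit of $s^{n-j}\zeta(s)$ and the convergence of the integral).

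For the second assertion \eqref{eq:steiner_measures_smooth}, I would specialize to $u\in\fconvs\cap C^2_+(\R^n)$. Here the conjugate mixed Monge--Amp\`ere measure $\nhp{j}(u;\cdot)$ has an absolutely continuous representation: pulling back under $\nabla u$ (a $C^1$-diffeomorphism onto its image for $u\in C^2_+$), one gets $\int_{\R^n}\alpha(|y|)\d\nhp{j}(u;y)=\frac{1}{\binom{n}{j}}\int_{\R^n}\alpha(|\nabla u(x)|)\,\tau_{n-j}(u,x)\d x$, where $\tau_{n-j}(u,\cdot)$ is the appropriate mixed-discriminant density arising from linearizing $\det(\Hess u + r\,\Hess|\nabla u|\circ\dots)$—concretely, $\tau_{n-j}$ is a mixed discriminant of $n-j$ copies of a Hessian-type matrix of $|\cdot|$ (pulled back) and $j$ copies of $\Hess u$, which by the change of variables $y=\nabla u(x)$ and the Steiner expansion of $\det$ matches $\binom{n}{j}[\Hess u]_{n-j}$-type terms; in fact comparing with \eqref{eq:rep_ozz_c2} and the first part already evaluated on $C^2_+$ functions pins down $\tau_{n-j}$. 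The main obstacle I anticipate is the bookkeeping in the measure-valued Steiner formula—specifically, justifying the polynomial expansion of $\MAp(u^{*}+r|\cdot|;\cdot)$ into mixed Monge--Amp\`ere measures with the correct binomial coefficients when $u^{*}$ is merely convex (not smooth), so that one must use the weak-convergence definition of mixed Monge--Amp\`ere measures and an approximation argument, together with the fact that $|\cdot|$ is not smooth at the origin (though this is harmless since the singularity sits at a single point and $\MAp$ of $|\cdot|$, i.e.\ surface area measure of $\Bn$, is a finite measure). Once that expansion is secured, everything else is matching coefficients of polynomials in $r$ and routine change-of-variables.
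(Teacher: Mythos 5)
Your proof of the first identity is correct and takes a genuinely different route from the paper's. The paper does \emph{not} derive Theorem~\ref{thm:steiner_measures} from the Steiner formula; it goes the other way. Its actual chain is: Reilly's identity (Lemma~\ref{le:sum_diff_c_ij_zero}) $\Rightarrow$ the integral transform identity relating Hessian measures and the measures $\nhd{j}$ (Proposition~\ref{prop:transform_ints}) $\Rightarrow$ a fresh proof of the existence theorem and of Theorem~\ref{thm:steiner_measures_dual} $\Rightarrow$ Theorem~\ref{thm:steiner_measures} (via conjugation) and \emph{then} the Steiner formula Theorem~\ref{thm_steiner_functions_dual}. Your reverse route---assume the Steiner formula, expand $\MAp(u\infconv(r\sq\ind_{\Bn});\cdot)$ by \eqref{eq:steiner_formula_maj_primal}, plug in $\oZZ n\alpha$ via \eqref{eq:n-hom}, compare coefficients in $r$, and unwind the transform $\cR^{n-j}$---is logically sound, and your coefficient check $\kappa_{n-j}\alpha_j=\cR^{-(n-j)}\alpha=\binom nj\zeta$ is exactly right (despite the momentary hesitation in your text). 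What this route \emph{buys} is a quick derivation from Hadwiger; what it \emph{costs} is independence. The paper's direct argument is self-contained and, en route, yields a new proof of the existence Theorems~\ref{thm:existence_singular_hessian_vals} and \ref{thm:existence_singular_hessian_vals_dual}, something your approach cannot do since it presupposes those results (they are baked into the statement of Theorem~\ref{thm_steiner_functions}). One logical caution: the paper's proof of Theorem~\ref{thm_steiner_functions} in Section~\ref{se:proofs} \emph{uses} Theorem~\ref{thm:steiner_measures_dual}, so you must invoke the independent alternate proof of the Steiner formula (Section~\ref{additional results}, via the functional Hadwiger theorem and Lemma~\ref{le:calc_ind_bn_tx_theta_i}) or your argument is circular. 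You should say so explicitly.

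For the second identity \eqref{eq:steiner_measures_smooth} there is a real gap. You wave at ``pulling back under $\nabla u$'' and ``pinning down $\tau_{n-j}$'' by comparison, but $\tau_{n-j}(u,x)$ is not a free parameter: it is defined geometrically as the $(n-j)$th elementary symmetric function of the principal curvatures of the sublevel set $\{u\le u(x)\}$ at $x$. The change of variables $y=\nabla u(x)$ together with Theorem~\ref{properties of nhp}~\ref{nhp c} and \eqref{eq:inv_hess} does give a density of the form
$\det\bigl((\Hess u(x))^{-1}[j],\ \Hess h_{\Bn}(\nabla u(x))[n-j]\bigr)\det(\Hess u(x))$,
but identifying this mixed-discriminant expression with $\tfrac1{\binom nj}\tau_{n-j}(u,x)$ is precisely the nontrivial geometric content of the paper's Theorem~\ref{representation of Steiner measures primal smooth}, whose proof rests on Lemma~\ref{Jacobian lemma}: the computation $\det(\D T_r(x))=\prod_{i=1}^{n-1}(1+r\kappa_i(u,x))$ for $T_r(x)=x+r\nabla u(x)/|\nabla u(x)|$ in an adapted frame of principal directions on the level set. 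Your proposal omits this computation entirely; ``comparing with \eqref{eq:rep_ozz_c2}'' only tells you what the density of $\nhp j(u;\cdot)$ must be, not that it coincides with the curvature quantity $\tau_{n-j}$. You need an analogue of Lemma~\ref{Jacobian lemma} to close the argument.
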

\noindent
Here, for $u\in\fconvs\cap C_+^2(\R^n)$ and $0\leq i\leq n-1$, we write $\tau_i(u,x)$ for the $i$th elementary symmetric function of the principal curvatures of the sublevel set
$\{y\in\R^n\colon u(y)\leq t\}$
at $x\in\R^n$  with $t=u(x)$ (and we use the convention $\tau_0(u,x)\!:=1$). Note that $\tau_i(u,x)$ is well-defined for such $u$ if $u(x)> \min_{y\in\R^n} u(y)$. Since such $u$ attains  its minimum at only one point, the integral in \eqref{eq:steiner_measures_smooth} is also well-defined.
We remark that a direct  proof of \eqref{eq:steiner_measures_smooth} was given in \cite[Lemma 3.9]{Colesanti-Ludwig-Mussnig-6}. Here it is a consequence of properties of the measures $\nhp{j}(u;\cdot)$ (see Theorem~\ref{representation of Steiner measures primal smooth}).

\goodbreak
Conjugate mixed Monge--Amp\`ere measures generalize Hessian measures on  $\fconvs$ (see Subsection~\ref{ss:hessian_measures_fconvs}) 
and the precise connection of integrals involving the measure $\nhp{j}(u;\cdot)$ and Hessian measures for $u\in\fconvs$ is established in Section \ref{se:steiner}. It is the basis of a new proof of Theorem~\ref{thm:existence_singular_hessian_vals} presented in Section~\ref{se:proofs}, where we also prove Theorem~\ref{thm:steiner_measures}.
\goodbreak

Combining Theorem \ref{thm:hadwiger_convex_functions} and Theorem~\ref{thm:steiner_measures}, we obtain the following new version of the Hadwiger theorem for convex functions. 

\begin{theorem}
A functional $\oZ\colon\fconvs\to\R$ is a continuous, epi-translation and rotation invariant valuation if and only if there exist functions $\alpha_0,\dots,\alpha_n\in C_c([0,\infty))$ such that
$$\oZ(u)=\sum_{j=0}^n \int_{\R^n} \alpha_j(|y|) \d \nhp{j}(u;y)$$
for every $u\in\fconvs$.
\end{theorem}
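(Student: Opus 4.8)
The plan is to derive this new Hadwiger-type theorem by combining the known classification (Theorem~\ref{thm:hadwiger_convex_functions}) with the representation of functional intrinsic volumes via conjugate mixed Monge--Amp\`ere measures (Theorem~\ref{thm:steiner_measures}). The sufficiency direction is essentially immediate: if $\oZ(u)=\sum_{j=0}^n \int_{\R^n} \alpha_j(|y|)\,\d\nhp{j}(u;y)$ for some $\alpha_0,\dots,\alpha_n\in C_c([0,\infty))$, then each summand is, by Theorem~\ref{thm:steiner_measures} read in reverse, of the form $\oZZ{j}{\zeta_j}(u)$ for a suitable $\zeta_j\in\Had{j}{n}$, hence a continuous, epi-translation and rotation invariant valuation; a finite sum of such functionals inherits all three properties, so $\oZ$ is a continuous, epi-translation and rotation invariant valuation. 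The only subtlety here is to check that the map $\zeta_j\mapsto\alpha_j$ appearing in Theorem~\ref{thm:steiner_measures} is \emph{onto} $C_c([0,\infty))$, i.e.\ that for every $\alpha_j\in C_c([0,\infty))$ there exists $\zeta_j\in\Had{j}{n}$ producing it; this should follow by inverting the integral transform $\alpha(s)=\binom{n}{j}\big(s^{n-j}\zeta(s)+(n-j)\int_s^\infty t^{n-j-1}\zeta(t)\,\d t\big)$, which is a first-order linear ODE in the unknown $\zeta$ and can be solved explicitly (this is presumably recorded in Lemma~\ref{le:r_kln} or in the discussion of the transform~\eqref{eq:steiner_function_transform}).

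For the necessity direction, I would start from a continuous, epi-translation and rotation invariant valuation $\oZ\colon\fconvs\to\R$ and apply Theorem~\ref{thm:hadwiger_convex_functions} to write $\oZ(u)=\sum_{j=0}^n \oZZ{j}{\zeta_j}(u)$ for some $\zeta_0\in\Had{0}{n},\dots,\zeta_n\in\Had{n}{n}$. Then I would invoke Theorem~\ref{thm:steiner_measures} term by term: for each $j$, $\oZZ{j}{\zeta_j}(u)=\int_{\R^n}\alpha_j(|y|)\,\d\nhp{j}(u;y)$ where $\alpha_j\in C_c([0,\infty))$ is given by the stated formula in terms of $\zeta_j$. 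Summing over $j$ yields the desired representation $\oZ(u)=\sum_{j=0}^n\int_{\R^n}\alpha_j(|y|)\,\d\nhp{j}(u;y)$, which completes the proof.

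So the logical skeleton is short and both directions reduce to Theorems~\ref{thm:hadwiger_convex_functions} and~\ref{thm:steiner_measures}. The main point requiring genuine care is the surjectivity of the correspondence $\zeta_j\leftrightarrow\alpha_j$ between $\Had{j}{n}$ and $C_c([0,\infty))$ for $0\le j\le n$: for $j=n$ this is the identification already built into the definition of $\Had{n}{n}$, while for $0\le j<n$ one needs that the transform in Corollary~\ref{co:def_steiner} and Theorem~\ref{thm:steiner_measures} is a bijection onto $C_c([0,\infty))$, with inverse landing in $\Had{j}{n}$; I expect this to be the one place where a small computation (solving the linear ODE and verifying the defining limits of $\Had{j}{n}$ for the resulting $\zeta_j$) is unavoidable, and it is likely already available from the properties of~\eqref{eq:steiner_function_transform}. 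Everything else is formal: the valuation, continuity (with respect to epi-convergence), epi-translation invariance and rotation invariance of each $\int_{\R^n}\alpha_j(|y|)\,\d\nhp{j}(u;\cdot)$ all follow from the corresponding properties of the $\oZZ{j}{\zeta_j}$, and these properties are preserved under finite sums.
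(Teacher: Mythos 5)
Your proposal is correct and matches the paper's approach exactly: the paper introduces this theorem with the phrase that it is obtained ``combining Theorem~\ref{thm:hadwiger_convex_functions} and Theorem~\ref{thm:steiner_measures}'' and follows it with the remark that equivalence to Theorem~\ref{thm:hadwiger_convex_functions} holds ``by properties of the integral transform from Theorem~\ref{thm:steiner_measures} which maps $\zeta$ to $\alpha$''. You have correctly identified the one non-formal ingredient, namely that $\zeta\mapsto\alpha=\binom{n}{j}\cR^{n-j}\zeta$ is a bijection from $\Had{j}{n}$ onto $\Had{j}{j}=C_c([0,\infty))$, which is precisely the content of Lemma~\ref{le:r_kln}; one small observation is that for the sufficiency direction the surjectivity is not strictly needed, since Proposition~\ref{prop:mixed_hessian_vals_primal} with $u_1=\dots=u_{n-j}=\ind_{\Bn}$ and $\beta=\alpha_j(|\cdot|)$ already yields continuity, the valuation property, epi-translation invariance, and (by symmetry of the data) rotation invariance of each summand directly.
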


\noindent
By properties of the integral transform from Theorem~\ref{thm:steiner_measures}  which maps $\zeta$ to $\alpha$, this version is equivalent to Theorem~\ref{thm:hadwiger_convex_functions}.

Using the Legendre--Fenchel transform or convex conjugate, we can translate the new results on $\fconvs$ to results on 
$\fconvf \!:=\{v\colon\R^n\to \R\colon v \text{ is convex}\}$, 
the space of finite-valued convex functions on $\R^n$. In fact, most results will be proved on $\fconvf$ and then transferred to $\fconvs$ using convex conjugation. Results on $\fconvf$ are presented in Section~\ref{se:results_on_fconvf}.
The next section is devoted to notation and preliminaries. In Section \ref{se:ma}, results on Monge--Amp\`ere measures and mixed Monge--Amp\`ere measures on $\fconvf$ are collected and the new measures $\nhd{j}(v;\cdot)$ for $v\in\fconvf$ and $0\le j\le n$ are discussed. In Section \ref{se:conjugate_mma_measures}, the corresponding results are presented on $\fconvs$. Results connecting the measure $\nhd{j}(v;\cdot)$ to the $j$th Hessian measure of $v\in\fconvf$ are established in Section \ref{se:steiner}. In the following section, the proofs of the main results are presented. In the final section, an alternate proof of the functional Steiner formula, results on the explicit representation of functional intrinsic volumes and on the retrieval of classical intrinsic volumes are presented. Moreover, general functional Steiner formulas are discussed.

\section{Results for Valuations on Finite-valued Convex Functions}
\label{se:results_on_fconvf}
A functional $\oZ\colon\fconvf\to\R$ is \emph{dually epi-translation invariant} if and only if $\oZ(v+\ell+\gamma)=\oZ(v)$ for every $v\in\fconvf$, every linear functional $\ell\colon\R^n\to\R$ and every $\gamma\in\R$, or equivalently, if the map $u\mapsto \oZ(u^*)$, defined on $\fconvs$, is epi-translation invariant. 
It was shown in \cite{Colesanti-Ludwig-Mussnig-3} that $\oZ\colon\fconvf\to\R$ is a continuous valuation if and only if $u\mapsto \oZ(u^*)$ is a continuous valuation on $\fconvs$ (see Proposition~\ref{basics}).

The following result is equivalent to Theorem \ref{thm:existence_singular_hessian_vals} by duality.

\begin{theorem}[\!\cite{Colesanti-Ludwig-Mussnig-5}, Theorem 1.4]
\label{thm:existence_singular_hessian_vals_dual}
For $0\leq j \leq n$ and $\zeta\in\Had{j}{n}$, there exists a unique, continuous, dually epi-translation and rotation invariant valuation $\oZZd{j}{\zeta}\colon\fconvf\to\R$ such that
\begin{equation}
\label{eq:rep_ozz_c2_dual}
\oZZd{j}{\zeta}(v)=\int_{\R^n} \zeta(|x|)\big[\Hess v(x)\big]_{j} \d x
\end{equation}
for every $v\in\fconvf\cap C^2_+(\R^n)$.
\end{theorem}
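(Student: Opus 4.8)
The plan is to deduce this from Theorem~\ref{thm:existence_singular_hessian_vals} by duality: the Legendre--Fenchel transform $u\mapsto u^*$ is a bijection between $\fconvs$ and $\fconvf$ which is continuous with respect to epi-convergence and satisfies $(u^*)^*=u$. I would define
\[
\oZZd{j}{\zeta}(v):=\oZZ{j}{\zeta}(v^*),\qquad v\in\fconvf,
\]
with $\oZZ{j}{\zeta}$ the functional provided by Theorem~\ref{thm:existence_singular_hessian_vals}, and then transfer its four properties. Since $u\mapsto\oZZd{j}{\zeta}(u^*)=\oZZ{j}{\zeta}(u)$ is a continuous valuation on $\fconvs$, Proposition~\ref{basics} shows that $\oZZd{j}{\zeta}$ is a continuous valuation on $\fconvf$. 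Dual epi-translation invariance follows from $(v+\ell+\gamma)^*=v^*\circ\tau-\gamma$ for a suitable translation $\tau$ on $\R^n$ (for every linear $\ell$ and $\gamma\in\R$) together with the epi-translation invariance of $\oZZ{j}{\zeta}$, and rotation invariance follows from $(v\circ\vartheta^{-1})^*=v^*\circ\vartheta^{-1}$ for $\vartheta\in\SO(n)$ together with the rotation invariance of $\oZZ{j}{\zeta}$.

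The main work lies in the integral representation on $\fconvf\cap C^2_+(\R^n)$. I would first treat a $v\in\fconvf\cap C^2_+(\R^n)$ for which $\nabla v$ is a diffeomorphism of $\R^n$; then $v^*\in\fconvs\cap C^2_+(\R^n)$, so \eqref{eq:rep_ozz_c2} applies to $u=v^*$. Using $\nabla v^*=(\nabla v)^{-1}$ and $\Hess v^*\circ\nabla v=(\Hess v)^{-1}$ and changing variables $x=\nabla v(y)$ (with Jacobian $\det\Hess v(y)$) yields
\[
\oZZd{j}{\zeta}(v)=\int_{\R^n}\zeta(|y|)\,\big[(\Hess v(y))^{-1}\big]_{n-j}\det\Hess v(y)\d y=\int_{\R^n}\zeta(|y|)\,\big[\Hess v(y)\big]_{j}\d y,
\]
where the last equality uses the elementary identity $[A^{-1}]_{n-j}\det A=[A]_j$ for positive definite symmetric $A$ (both sides equal the $j$th elementary symmetric function of the eigenvalues of $A$).

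To handle a general $v\in\fconvf\cap C^2_+(\R^n)$, I would regularize by $v_\varepsilon:=v+\tfrac{\varepsilon}{2}|\cdot|^2$ for $\varepsilon>0$. Each $v_\varepsilon$ lies in $\fconvf\cap C^2_+(\R^n)$ and is super-coercive and strictly convex, so $\nabla v_\varepsilon$ is a diffeomorphism of $\R^n$ and the previous step applies to $v_\varepsilon$. As $\varepsilon\to0^+$, $v_\varepsilon\to v$ epi (the convergence is monotone and locally uniform), so $\oZZd{j}{\zeta}(v_\varepsilon)\to\oZZd{j}{\zeta}(v)$ by continuity, while $\int_{\R^n}\zeta(|y|)[\Hess v_\varepsilon(y)]_j\d y\to\int_{\R^n}\zeta(|y|)[\Hess v(y)]_j\d y$ by dominated convergence: the integrands vanish outside a fixed ball since $\zeta$ has bounded support, $y\mapsto[\Hess v_\varepsilon(y)]_j$ is bounded on that ball uniformly for $\varepsilon\le1$, and $y\mapsto\zeta(|y|)$ is integrable over that ball by the conditions defining $\Had{j}{n}$. (For $j=0$ the symmetric function is constant, so $\oZZd{0}{\zeta}$ is a constant and the claim is immediate.) This yields the representation for all $v\in\fconvf\cap C^2_+(\R^n)$.

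For uniqueness, suppose $\oZ\colon\fconvf\to\R$ has the stated properties. Then $u\mapsto\oZ(u^*)$ is a continuous, epi-translation and rotation invariant valuation on $\fconvs$, and for $u\in\fconvs\cap C^2_+(\R^n)$ one has $u^*\in\fconvf\cap C^2_+(\R^n)$ (here the super-coercivity of $u$ is what makes $\nabla u$ surjective onto $\R^n$); applying the representation for $\oZ$ to $u^*$ and reversing the above change of variables gives $\oZ(u^*)=\oZZ{j}{\zeta}(u)$. By the uniqueness in Theorem~\ref{thm:existence_singular_hessian_vals}, $\oZ(u^*)=\oZZ{j}{\zeta}(u)$ for all $u\in\fconvs$, that is, $\oZ=\oZZd{j}{\zeta}$. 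The genuinely delicate point is the integral representation: conjugation maps $\fconvs\cap C^2_+(\R^n)$ into $\fconvf\cap C^2_+(\R^n)$ but not the other way around, so \eqref{eq:rep_ozz_c2} cannot be invoked directly, and it is the regularization argument—together with the integrability near the origin of the possibly singular radial weight $\zeta$—that makes the proof go through.
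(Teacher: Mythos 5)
Your proposal is correct, but it takes a genuinely different route from the one the paper actually follows in Section~\ref{se:proofs}. You deduce Theorem~\ref{thm:existence_singular_hessian_vals_dual} from the primal existence theorem, Theorem~\ref{thm:existence_singular_hessian_vals}, via conjugation, and you rightly identify the non-trivial point: while $u\mapsto u^*$ maps $\fconvs\cap C_+^2(\R^n)$ to itself, it does \emph{not} map $\fconvf\cap C_+^2(\R^n)$ into $\fconvs\cap C_+^2(\R^n)$ (e.g.\ $v(x)=\sqrt{1+|x|^2}$ has $v^*$ non-smooth at the boundary of $\Bn$), so \eqref{eq:rep_ozz_c2} cannot be applied directly to $v^*$. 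Your regularization $v_\varepsilon:=v+\tfrac{\varepsilon}{2}|\cdot|^2$ together with dominated convergence handles this correctly: the bounded support of $\zeta\in\Had{j}{n}$ localizes the integral, $[\Hess v_\varepsilon]_j$ is uniformly bounded there, and $s\mapsto s^{n-1}\zeta(s)$ is bounded near $0$ for $j\ge1$ by the decay condition in $\Had{j}{n}$; the change of variables $y=\nabla u(x)$ together with \eqref{eq:inv_hess} and \eqref{eq:jtrace} is also right. The paper, by contrast, deliberately gives a \emph{new direct proof} on $\fconvf$ that is independent of Theorem~\ref{thm:existence_singular_hessian_vals}: it sets $\alpha:=\binom{n}{j}\cR^{n-j}\zeta\in\Had{n}{n}$, defines $\oZ(v):=\int_{\R^n}\alpha(|x|)\,\d\nhd{j}(v;x)$, invokes Proposition~\ref{prop:mixed_hessian_vals} and Theorem~\ref{properties of nhm} for the valuation, continuity, invariance, and density, and then Proposition~\ref{prop:transform_ints} (whose proof rests on the Reilly-type Lemmas~\ref{le:sum_diff_c_ij_zero}--\ref{le:int_mixed_dis_zero}) to match the representation with $\int\zeta(|x|)[\Hess v(x)]_j\,\d x$. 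Your approach is more elementary and shorter, at the cost of taking the primal theorem as a black box; the paper's approach is self-contained within the mixed Monge--Amp\`ere machinery developed here, and yields the stronger conclusion that the representation in fact holds on all of $\fconvf\cap C^2(\R^n)$, without positive definiteness of the Hessian (a fact the paper explicitly highlights and later uses in the proof of Theorem~\ref{thm:steiner_measures_dual}).
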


\noindent
Here, for $0\leq j\leq n$ and $\zeta\in\Had{j}{n}$, the valuation $\oZZd{j}{\zeta}$ is dual to $\oZZ{j}{\zeta}$ in the sense that $\oZZd{j}{\zeta}(v)=\oZZ{j}{\zeta}(v^*)$ for every $v\in\fconvf$. We remark that the new proof of Theorem~\ref{thm:existence_singular_hessian_vals_dual} that we present in Section~\ref{se:proofs} actually shows that the representation \eqref{eq:rep_ozz_c2_dual} holds on $\fconvf\cap C^2(\R^n)$.

\goodbreak
The Hadwiger Theorem on $\fconvf$ is the following result, which is equivalent to Theorem~\ref{thm:hadwiger_convex_functions} by duality.

\begin{theorem}[\!\!\cite{Colesanti-Ludwig-Mussnig-5}, Theorem 1.5]
\label{thm:hadwiger_convex_functions_dual}
A functional $\oZ\colon\fconvf \to \R$ is a continuous, dually epi-translation and rotation invariant valuation if and only if there exist functions $\zeta_0\in\Had{0}{n}$, \dots, $\zeta_n\in\Had{n}{n}$  such that
\begin{equation*}
\oZ(v)= \sum_{j=0}^n \oZZ{j}{\zeta_j}^*(v) 
\end{equation*}
for every $v\in\fconvf$.
\end{theorem}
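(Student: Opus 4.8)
The plan is to derive the statement from the Hadwiger theorem on $\fconvs$ (Theorem~\ref{thm:hadwiger_convex_functions}) by transporting everything through the Legendre--Fenchel transform $v\mapsto v^*$, which is a bijection between $\fconvf$ and $\fconvs$ whose inverse is again conjugation and which is continuous with respect to epi-convergence. Two facts make the transfer work. First, by Proposition~\ref{basics} (from \cite{Colesanti-Ludwig-Mussnig-3}), a functional $\oZ\colon\fconvf\to\R$ is a continuous valuation if and only if $u\mapsto\oZ(u^*)$ is a continuous valuation on $\fconvs$. Second, the elementary conjugation identities $(v+\ell+\gamma)^*=(v^*\circ\tau^{-1})-\gamma$, valid for a linear functional $\ell=\langle b,\cdot\rangle$ with $\tau$ the translation by $b$, and $(v\circ\vartheta^{-1})^*=v^*\circ\vartheta^{-1}$ for $\vartheta\in\SO(n)$, show that $\oZ$ is dually epi-translation invariant exactly when $u\mapsto\oZ(u^*)$ is epi-translation invariant, and rotation invariant exactly when $u\mapsto\oZ(u^*)$ is rotation invariant.

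For the ``only if'' direction I would let $\oZ\colon\fconvf\to\R$ be a continuous, dually epi-translation and rotation invariant valuation and set $\Phi(u)\!:=\oZ(u^*)$ for $u\in\fconvs$. By the two facts above, $\Phi$ is a continuous, epi-translation and rotation invariant valuation on $\fconvs$, so Theorem~\ref{thm:hadwiger_convex_functions} furnishes $\zeta_0\in\Had{0}{n},\dots,\zeta_n\in\Had{n}{n}$ with $\Phi(u)=\sum_{j=0}^n\oZZ{j}{\zeta_j}(u)$ for every $u\in\fconvs$. Substituting $u=v^*$ and using $v^{**}=v$ gives $\oZ(v)=\Phi(v^*)=\sum_{j=0}^n\oZZ{j}{\zeta_j}(v^*)=\sum_{j=0}^n\oZZ{j}{\zeta_j}^*(v)$ for every $v\in\fconvf$, which is the claimed representation. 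For the ``if'' direction, given $\zeta_0,\dots,\zeta_n$ as above, each $\oZZ{j}{\zeta_j}^*$ is a continuous, dually epi-translation and rotation invariant valuation on $\fconvf$ by Theorem~\ref{thm:existence_singular_hessian_vals_dual}, and a finite sum of functionals sharing all these properties has them as well; hence $\oZ=\sum_{j=0}^n\oZZ{j}{\zeta_j}^*$ is of the asserted type.

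The one step requiring genuine care is the transfer of the valuation (inclusion--exclusion) identity: the lattice operations $\vee$ and $\wedge$ on $\fconvf$ do not correspond to those on $\fconvs$ in a fully naive way, because conjugation reverses order only up to passing to closed convex hulls, and one has to verify that whenever $v_1,v_2,v_1\vee v_2,v_1\wedge v_2$ all belong to $\fconvf$ the relevant conjugate functions belong to $\fconvs$, so that the identity survives the transform. This is exactly what Proposition~\ref{basics} provides, so in the present setting no new argument is needed and the remaining work reduces to the routine conjugation identities noted above.
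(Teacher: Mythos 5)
Your proposal is correct and takes exactly the route the paper indicates: it makes explicit the phrase ``equivalent to Theorem~\ref{thm:hadwiger_convex_functions} by duality'' that accompanies the statement. The paper cites both Hadwiger theorems from \cite{Colesanti-Ludwig-Mussnig-5} without re-proving either, so your argument is essentially a worked-out version of the stated equivalence: transfer continuity and the valuation identity via Proposition~\ref{basics}, translate the invariance conditions by the conjugation identities $(v+\ell+\gamma)^* = v^*\circ\tau^{-1}-\gamma$ and $(v\circ\vartheta^{-1})^*=v^*\circ\vartheta^{-1}$, apply Theorem~\ref{thm:hadwiger_convex_functions} to $\Phi(u):=\oZ(u^*)$, and substitute $u=v^*$ using $v^{**}=v$ together with the definition $\oZZd{j}{\zeta_j}(v)=\oZZ{j}{\zeta_j}(v^*)$. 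Your closing remark about the lattice operations is the right thing to flag, and Proposition~\ref{basics}(c) together with the cited result from \cite{Colesanti-Ludwig-Mussnig-3} does indeed settle it, so no gap remains.
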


\goodbreak
We obtain the following dual version of the functional Steiner formulas from Theorem~\ref{thm_steiner_functions}. We use the support function of the unit ball, $h_{\Bn}(x)=|x|$ for $x\in\R^n$, and the fact that $(u\infconv (r\sq \ind_{\Bn}))^*=u^* + r\, h_{\Bn}$ for $u\in\fconvs$ and $r>0$.

\goodbreak
\begin{theorem}
\label{thm_steiner_functions_dual}
If $\zeta\in\Had{n}{n}$, then
\begin{equation*}
\oZZd{n}{\zeta}(v+ r\, h_{\Bn}) = \sum_{j=0}^n r^{n-j} \kappa_{n-j} \oZZd{j}{\zeta_j}(v)
\end{equation*}
for every $v\in\fconvf$ and $r>0$, where $\zeta_j\in \Had{j}{n}$ is given by
\begin{equation}
\label{eq:steiner_function_dual_transform}
\zeta_j(s):=\frac{1}{\kappa_{n-j}} \left(\frac{\zeta(s)}{s^{n-j}} - (n-j) \int_s^{\infty} \frac{\zeta(t)}{t^{n-j+1}} \d t \right)
\end{equation}
for $s>0$ and $\,0\leq j \leq n$.
\end{theorem}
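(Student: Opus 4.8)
The plan is to derive Theorem~\ref{thm_steiner_functions_dual} directly from Theorem~\ref{thm_steiner_functions} by convex conjugation, so essentially no new work is required beyond bookkeeping. First I would recall the basic duality dictionary: for $u\in\fconvs$ we have $u^*\in\fconvf$, the conjugation $u\mapsto u^*$ is an involution between the two spaces, it interchanges infimal convolution and pointwise addition, i.e. $(u\infconv w)^*=u^*+w^*$, and it interchanges epi-multiplication by $r>0$ with ordinary scalar multiplication by $r$, so $(r\sq w)^*=r\,w^*$. In particular, since $(\ind_{\Bn})^*=h_{\Bn}$ with $h_{\Bn}(x)=|x|$, we obtain $(r\sq\ind_{\Bn})^*=r\,h_{\Bn}$ and hence $\bigl(u\infconv(r\sq\ind_{\Bn})\bigr)^*=u^*+r\,h_{\Bn}$, exactly the identity flagged in the excerpt just before the statement.

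Next I would simply evaluate the identity of Theorem~\ref{thm_steiner_functions} at $u$ and rewrite both sides in terms of $v:=u^*\in\fconvf$. By the defining relation $\oZZd{j}{\zeta_j}(w)=\oZZ{j}{\zeta_j}(w^*)$ for $w\in\fconvf$ (stated in the excerpt right after Theorem~\ref{thm:existence_singular_hessian_vals_dual}), the right-hand side $\sum_{j=0}^n r^{n-j}\kappa_{n-j}\oZZ{j}{\zeta_j}(u)$ becomes $\sum_{j=0}^n r^{n-j}\kappa_{n-j}\oZZd{j}{\zeta_j}(v)$, since $u=(u^*)^*=v^*$. For the left-hand side, $\oZZ{n}{\zeta}\bigl(u\infconv(r\sq\ind_{\Bn})\bigr)=\oZZd{n}{\zeta}\bigl((u\infconv(r\sq\ind_{\Bn}))^*\bigr)=\oZZd{n}{\zeta}(v+r\,h_{\Bn})$ by the conjugation identity above. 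Equating the two gives precisely the asserted formula, and the transform \eqref{eq:steiner_function_dual_transform} is literally the same as \eqref{eq:steiner_function_transform}, so $\zeta_j\in\Had{j}{n}$ follows from Theorem~\ref{thm_steiner_functions}. One should also check that $v+r\,h_{\Bn}\in\fconvf$: this is clear since $h_{\Bn}$ is finite-valued and convex, so adding $r\,h_{\Bn}$ preserves finiteness and convexity; equivalently, $u\infconv(r\sq\ind_{\Bn})\in\fconvs$ because infimal convolution of super-coercive lsc convex functions is again in $\fconvs$, which is already used implicitly in Theorem~\ref{thm_steiner_functions}.

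There is essentially no serious obstacle here — the only points needing a word of care are the precise behavior of conjugation under infimal convolution and epi-multiplication (standard facts from convex analysis, and spelled out in the preliminaries on $\fconvs$ and $\fconvf$ referenced in the excerpt) and the fact that $u$ ranges over all of $\fconvs$ precisely when $v=u^*$ ranges over all of $\fconvf$, which is the statement that $*$ is a bijection between these spaces. Given those, the proof is a two-line computation. The cleanest write-up is therefore: state that the result is the conjugate reformulation of Theorem~\ref{thm_steiner_functions}, apply $*$ to both sides of \eqref{steiner_function} using $(u\infconv(r\sq\ind_{\Bn}))^*=u^*+r\,h_{\Bn}$ and $\oZZd{j}{\zeta}(\cdot)=\oZZ{j}{\zeta}((\cdot)^*)$, set $v=u^*$, and observe that as $u$ runs through $\fconvs$, $v$ runs through $\fconvf$.
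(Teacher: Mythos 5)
Your conjugation steps are all correct: $(u\infconv (r\sq\ind_{\Bn}))^* = u^*+r\,h_{\Bn}$ by Proposition~\ref{basics}(d) and $\ind_{\Bn}^*=h_{\Bn}$, the duality $\oZZd{j}{\zeta}(v)=\oZZ{j}{\zeta}(v^*)$ is exactly as stated after Theorem~\ref{thm:existence_singular_hessian_vals_dual}, conjugation is a bijection $\fconvs\leftrightarrow\fconvf$, and the transform in \eqref{eq:steiner_function_dual_transform} is indeed identical to \eqref{eq:steiner_function_transform}. So the argument is a valid implication: Theorem~\ref{thm_steiner_functions} implies Theorem~\ref{thm_steiner_functions_dual}.

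However, this reverses the paper's logical order and, as written, risks circularity. In Section~\ref{se:proofs}, the paper proves Theorem~\ref{thm_steiner_functions_dual} \emph{directly}: it uses Theorem~\ref{thm:steiner_measures_dual} to write $\oZZd{n}{\zeta}(v+r\,h_{\Bn})=\int_{\R^n}\zeta(|x|)\,\d\MA(v+r\,h_{\Bn};x)$, expands via the Monge--Amp\`ere Steiner formula \eqref{eq:steiner_formula_maj} to get $\sum_{j=0}^n\binom{n}{j}r^{n-j}\int\zeta(|x|)\,\d\nhd{j}(v;x)$, and identifies each coefficient using Theorem~\ref{thm:steiner_measures_dual} again together with Lemma~\ref{le:r_kln}. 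It then derives Theorem~\ref{thm_steiner_functions} \emph{from} Theorem~\ref{thm_steiner_functions_dual} by conjugation — precisely the dual of what you propose. If you treat Theorem~\ref{thm_steiner_functions} as a given black box whose proof is the one in Section~\ref{se:proofs}, your derivation is circular. To make your route self-contained you must explicitly invoke the \emph{alternate} proof of Theorem~\ref{thm_steiner_functions} given in Subsection~\ref{ss:alternate_proof_of_steiner}, which relies on the functional Hadwiger theorem (Theorem~\ref{thm:hadwiger_convex_functions}) rather than on Theorem~\ref{thm_steiner_functions_dual}. With that proviso, your proof is the conjugate shadow of the paper's Section~\ref{additional results} route, while the paper's Section~\ref{se:proofs} route is genuinely different in content: it avoids the Hadwiger theorem entirely by working with the measures $\nhd{j}(v;\cdot)$ directly, which is precisely what makes it a ``direct'' proof. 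You should state which independent proof of Theorem~\ref{thm_steiner_functions} you rely on, otherwise the argument is incomplete.
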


An immediate consequence is the following result.

\begin{corollary}
Let $0\leq j < n$. If $\zeta\in\Had{j}{n}$, then
$$\oZZd{j}{\zeta}(v) = \frac{j!}{n!} \frac{\d^{n-j}}{\d r^{n-j}} \Big\vert_{r=0} \oZZd{n}{\alpha}(v + r\, h_{\Bn})$$
for every $v\in\fconvf$, where $\alpha\in C_c([0,\infty))$ is given by
$$\alpha(s):= \binom{n}{j} \Big(s^{n-j} \zeta(s) + (n-j) \int_s^{\infty} t^{n-j-1} \zeta(t) \d t\Big)$$
for $s>0$.
\end{corollary}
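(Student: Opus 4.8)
The plan is to derive this corollary directly from Theorem~\ref{thm_steiner_functions_dual} by treating the right-hand side as a polynomial in $r$ and extracting a single coefficient via repeated differentiation at $r=0$. First I would fix $0\le j<n$ and $\zeta\in\Had{j}{n}$, and apply Theorem~\ref{thm_steiner_functions_dual} not to $\zeta$ itself but to the function $\alpha\in C_c([0,\infty))\subset\Had{n}{n}$ defined in the statement. Writing out \eqref{eq:steiner_function_dual_transform} for the density $\alpha$, we get
\begin{equation*}
\oZZd{n}{\alpha}(v+r\,h_{\Bn})=\sum_{i=0}^n r^{n-i}\kappa_{n-i}\,\oZZd{i}{\alpha_i}(v),
\qquad
\alpha_i(s)=\frac{1}{\kappa_{n-i}}\left(\frac{\alpha(s)}{s^{n-i}}-(n-i)\int_s^{\infty}\frac{\alpha(t)}{t^{n-i+1}}\d t\right).
\end{equation*}
Differentiating $n-j$ times in $r$ and evaluating at $r=0$ kills every term with $n-i<n-j$ (i.e.\ $i>j$) and also every term with $n-i>n-j$ (i.e.\ $i<j$), leaving only the $i=j$ term, whose $r$-dependence is $r^{n-j}$; thus $\frac{\d^{n-j}}{\d r^{n-j}}\big\vert_{r=0}\oZZd{n}{\alpha}(v+r\,h_{\Bn})=(n-j)!\,\kappa_{n-j}\,\oZZd{j}{\alpha_j}(v)$. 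Multiplying by $\tfrac{j!}{n!}$ and noting $\tfrac{j!}{n!}(n-j)!\kappa_{n-j}=\kappa_{n-j}/\binom{n}{j}$, the corollary follows once we verify $\oZZd{j}{\alpha_j}=\binom{n}{j}\oZZd{j}{\zeta}$, i.e.\ $\alpha_j=\binom{n}{j}\zeta$ as elements of $\Had{j}{n}$.

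The key computation is therefore the identity $\alpha_j=\binom{n}{j}\zeta$, which is precisely the statement that the transform $\zeta\mapsto\alpha$ given in the corollary is (up to the binomial factor) inverse to the transform $\zeta\mapsto\zeta_j$ of \eqref{eq:steiner_function_dual_transform} in the index $j$. Concretely, with $k=n-j$ and $\beta(s)=s^{k}\zeta(s)+k\int_s^\infty t^{k-1}\zeta(t)\d t$ (so $\alpha=\binom{n}{j}\beta$), one computes $\beta(s)/s^k=\zeta(s)+k\,s^{-k}\int_s^\infty t^{k-1}\zeta(t)\d t$ and, integrating by parts or differentiating under the integral sign, $k\int_s^\infty\beta(t)/t^{k+1}\d t=k\,s^{-k}\int_s^\infty t^{k-1}\zeta(t)\d t$; subtracting gives $\beta(s)/s^k-k\int_s^\infty\beta(t)/t^{k+1}\d t=\zeta(s)$, hence $\alpha_j=\tfrac{1}{\kappa_{n-j}}(\alpha(s)/s^{n-j}-(n-j)\int_s^\infty\alpha(t)/t^{n-j+1}\d t)$ equals $\tfrac{\binom{n}{j}}{\kappa_{n-j}}\cdot\kappa_{n-j}\zeta=\binom{n}{j}\zeta$. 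This is exactly the content of the inversion properties of the transform \eqref{eq:steiner_function_transform}, which are proved in the paper in connection with Lemma~\ref{le:r_kln} and Corollary~\ref{co:def_steiner}; I would simply invoke those.

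The only genuinely non-routine points to check are the interchange of $\frac{\d^{n-j}}{\d r^{n-j}}$ with the finite sum (immediate, since the sum is finite and each summand is a monomial in $r$) and the membership $\alpha\in\Had{n}{n}$ together with $\alpha_j\in\Had{j}{n}$, so that all the functional intrinsic volumes appearing are well-defined; the latter is guaranteed by the mapping properties of the transform already recorded in the paper (the same $\alpha$ appears in Corollary~\ref{co:def_steiner} and Theorem~\ref{thm:steiner_measures}). The main obstacle, if any, is purely bookkeeping: keeping the normalizing constants $\kappa_{n-i}$, the binomial coefficients, and the factorials consistent, and confirming that differentiating $r^{n-j}$ exactly $n-j$ times at $0$ yields $(n-j)!$ while all other powers vanish. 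No new ideas are needed beyond Theorem~\ref{thm_steiner_functions_dual} and the already-established inversion identity for the transform.
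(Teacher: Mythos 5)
Your approach is exactly the one the paper has in mind: apply Theorem~\ref{thm_steiner_functions_dual} with density $\alpha\in\Had{n}{n}$, differentiate the resulting polynomial in $r$ exactly $n-j$ times at $r=0$ to isolate the $i=j$ term, and identify the density $\alpha_j$ produced by the transform \eqref{eq:steiner_function_dual_transform} with a multiple of the original $\zeta$ via the inversion property $\cR^{-(n-j)}\cR^{n-j}\zeta=\zeta$ from Lemma~\ref{le:r_kln}. That is the correct and intended argument.

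However, your constants contain two errors that happen to cancel, and you should fix them so the write-up is internally consistent. After multiplying $(n-j)!\,\kappa_{n-j}\,\oZZd{j}{\alpha_j}(v)$ by $\tfrac{j!}{n!}$ you correctly obtain $\tfrac{\kappa_{n-j}}{\binom{n}{j}}\oZZd{j}{\alpha_j}(v)$, so what you actually need to verify is
$$\alpha_j=\frac{\binom{n}{j}}{\kappa_{n-j}}\,\zeta,$$
not $\alpha_j=\binom{n}{j}\zeta$ as you assert. Then, when you compute $\alpha_j$, the correct conclusion of your own calculation is again $\alpha_j=\tfrac{1}{\kappa_{n-j}}\cR^{-(n-j)}\alpha=\tfrac{1}{\kappa_{n-j}}\cdot\binom{n}{j}\cR^{-(n-j)}\cR^{n-j}\zeta=\tfrac{\binom{n}{j}}{\kappa_{n-j}}\zeta$; the step where you write ``equals $\tfrac{\binom{n}{j}}{\kappa_{n-j}}\cdot\kappa_{n-j}\zeta=\binom{n}{j}\zeta$'' introduces a spurious factor of $\kappa_{n-j}$ that is not produced by the computation preceding it. Because this spurious factor exactly compensates the dropped $1/\kappa_{n-j}$ in the target identity, your final answer is correct, but as written the intermediate claims are both wrong. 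Once you correct both to $\alpha_j=\tfrac{\binom{n}{j}}{\kappa_{n-j}}\zeta$ and then substitute into $\tfrac{\kappa_{n-j}}{\binom{n}{j}}\oZZd{j}{\alpha_j}(v)$ (using linearity of $\oZZd{j}{\,\cdot\,}$ in its density), the proof is complete and matches the paper.
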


Let $\MA(v;\cdot)$ be the Monge--Amp\`ere measure of $v\in\fconvf$ and write $\MA(v_1,\dots,v_n;\cdot)$ for its polarization, the \emph{mixed Monge--Amp\`ere measure} of $v_1,\dots,v_n\in\fconvf$. 
For $0\leq j \leq n$ and $v\in\fconvf$, we set
$$\nhd{j}(v;\cdot):=\MA(v[j], h_{\Bn}[n-j];\cdot)$$
(see Section \ref{se:ma} for results on Monge--Amp\`ere measures, mixed Monge--Amp\`ere measures and this new family of measures).
\goodbreak
The following result corresponds to Theorem~\ref{thm:steiner_measures}.

\begin{theorem}
\label{thm:steiner_measures_dual}
If $\,0\leq j \leq n$ and $\zeta\in\Had{j}{n}$, then
$$\oZZd{j}{\zeta}(v)=\int_{\R^n} \alpha(|x|) \d\nhd{j}(v;x)$$
for every $v\in\fconvf$, where $\alpha\in C_c([0,\infty))$ is given by
$$\alpha(s):=\binom{n}{j} \Big(s^{n-j}\zeta(s) + (n-j) \int_s^{\infty}t^{n-j-1} \zeta(t) \d t \Big)$$
for $s>0$. Moreover, for $1\leq j \leq n$,
\begin{equation}
\label{eq:rep_ozz_j_zeta_dual}
\oZZd{j}{\zeta}(v)=\int_{\R^n} \alpha(|x|)\det(\Hess v(x)[j],\Hess h_{\Bn}(x)[n-j])\d x
\end{equation}
for $v\in\fconvf\cap C^2(\R^n)$.
\end{theorem}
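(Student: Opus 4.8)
The plan is to reduce Theorem~\ref{thm:steiner_measures_dual} to the already-established dual functional Steiner formula (Theorem~\ref{thm_steiner_functions_dual}) together with the polynomial expansion of the mixed Monge--Amp\`ere measure. First I would record the fact (to be proved in Section~\ref{se:ma}) that for $v\in\fconvf$ and $r>0$ the Monge--Amp\`ere measure of $v+r\,h_{\Bn}$ expands as a polynomial in $r$,
\begin{equation*}
\MA(v+r\,h_{\Bn};\cdot)=\sum_{j=0}^n \binom{n}{j} r^{n-j}\,\MA(v[j],h_{\Bn}[n-j];\cdot)=\sum_{j=0}^n \binom{n}{j} r^{n-j}\,\nhd{j}(v;\cdot),
\end{equation*}
which is the measure-valued analogue of the Steiner formula and follows from multilinearity and symmetry of the polarization $\MA(v_1,\dots,v_n;\cdot)$. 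Integrating a fixed test function $g\in C_c([0,\infty))$ against $|x|$ then gives
\begin{equation*}
\int_{\R^n} g(|x|)\d\MA(v+r\,h_{\Bn};x)=\sum_{j=0}^n \binom{n}{j} r^{n-j}\int_{\R^n} g(|x|)\d\nhd{j}(v;x).
\end{equation*}

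Next I would apply this with $g=\alpha$, where $\alpha\in C_c([0,\infty))$ corresponds to $\zeta$ via the transform in the statement. The point is that $\int_{\R^n}\alpha(|x|)\d\MA(w;x)$ is exactly $\oZZd{n}{\beta}(w)$ for a suitable $\beta\in\Had{n}{n}$; more precisely, by \eqref{eq:n-hom} and duality, $\oZZd{n}{\beta}(w)=\int_{\dom w^*}\beta(|\nabla w^*|)\d x=\int_{\R^n}\beta(|x|)\d\MA(w;x)$ when $\beta$ is continuous with compact support, so choosing $\beta=\alpha$ identifies the left side above with $\oZZd{n}{\alpha}(v+r\,h_{\Bn})$. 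On the other hand, Theorem~\ref{thm_steiner_functions_dual} (applied with the density $\alpha$ in place of $\zeta$, noting $\alpha\in\Had{n}{n}$) expands $\oZZd{n}{\alpha}(v+r\,h_{\Bn})$ as $\sum_{k=0}^n r^{n-k}\kappa_{n-k}\oZZd{k}{\alpha_k}(v)$ with $\alpha_k$ given by \eqref{eq:steiner_function_dual_transform}. Comparing the two polynomial identities in $r$ coefficient-by-coefficient yields
\begin{equation*}
\binom{n}{j}\int_{\R^n}\alpha(|x|)\d\nhd{j}(v;x)=\kappa_{n-j}\,\oZZd{j}{\alpha_j}(v)
\end{equation*}
for each $j$. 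The final step is a bookkeeping check that, with the chosen $\alpha$, one has $\kappa_{n-j}\alpha_j=\binom{n}{j}\zeta$, i.e. that the transform $\zeta\mapsto\alpha$ of this theorem and the transform $\alpha\mapsto\alpha_j$ of Theorem~\ref{thm_steiner_functions_dual} are mutually inverse up to the stated constants; this is a direct computation with the integral kernels $s^{n-j}$ and $\int_s^\infty t^{n-j-1}(\cdot)\d t$, and is essentially the content of the lemma on the transform~\eqref{eq:steiner_function_transform} referenced in Corollary~\ref{co:def_steiner}.

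For the smooth representation \eqref{eq:rep_ozz_j_zeta_dual}, I would specialize to $v\in\fconvf\cap C^2(\R^n)$, where $\MA(v+r\,h_{\Bn};\cdot)$ is absolutely continuous with density $\det(\Hess v + r\,\Hess h_{\Bn})$, which expands by the polarization identity for the determinant as $\sum_{j=0}^n\binom{n}{j} r^{n-j}\det(\Hess v[j],\Hess h_{\Bn}[n-j])$. Hence $\nhd{j}(v;\cdot)$ has density $\binom{n}{j}\det(\Hess v[j],\Hess h_{\Bn}[n-j])$ in this case, and substituting into the first assertion of the theorem gives \eqref{eq:rep_ozz_j_zeta_dual} immediately. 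One subtlety is that $h_{\Bn}$ is not in $C^2$ at the origin, so $\Hess h_{\Bn}$ has a singularity there; this is harmless because $\det(\Hess v[j],\Hess h_{\Bn}[n-j])$ for $j\ge1$ involves at most $n-1$ copies of $\Hess h_{\Bn}=|x|^{-1}(I-|x|^{-2}x\otimes x)$, whose singular entries are of order $|x|^{-1}$ and hence integrable against the smooth compactly supported weight near $0$, and the mixed discriminant picks up the well-defined curvature terms.

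The main obstacle, and the part requiring genuine work rather than bookkeeping, is establishing the polynomiality of $\MA(v+r\,h_{\Bn};\cdot)$ as a measure-valued Steiner formula and the existence and continuity properties of the mixed Monge--Amp\`ere measures $\MA(v_1,\dots,v_n;\cdot)$ and of $\nhd{j}(v;\cdot)$ in full generality on $\fconvf$ (not merely in the $C^2$ case). This is precisely what is deferred to Section~\ref{se:ma}, and I would lean on those results as black boxes here. The remaining inversion-of-transforms computation is the only other place where an error could creep in, and I would carry it out carefully once, probably isolating it as the lemma referenced in Corollary~\ref{co:def_steiner}, so that both Theorem~\ref{thm_steiner_functions_dual} and Theorem~\ref{thm:steiner_measures_dual} can cite it.
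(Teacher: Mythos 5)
Your proposal takes a genuinely different route from the paper's. The paper proves Theorem~\ref{thm:steiner_measures_dual} directly by combining the Reilly-lemma machinery of Section~\ref{se:steiner} (specifically Proposition~\ref{prop:transform_ints}, which converts $\int \zeta(|x|)[\Hess v]_j\,\d x$ into an integral of $\cR^{n-j}\zeta$ against the mixed discriminant with $\Hess h_{\Bn}$) with the absolute-continuity statements of Theorem~\ref{properties of nhm}; the functional Steiner formula is then \emph{derived} from Theorem~\ref{thm:steiner_measures_dual} together with \eqref{eq:steiner_formula_maj}. Your argument reverses this dependency: you take Theorem~\ref{thm_steiner_functions_dual} as given, expand both $\oZZd{n}{\alpha}(v+rh_{\Bn})$ (via the Steiner formula) and $\int\alpha(|x|)\,\d\MA(v+rh_{\Bn};x)$ (via multilinearity of the mixed measure) as polynomials in $r$, match coefficients, and then invert the transform. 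That is arithmetically correct: indeed $\alpha=\binom{n}{j}\cR^{n-j}\zeta\in\Had{n}{n}$ by Lemma~\ref{le:r_kln}, and $\kappa_{n-j}\alpha_j=\kappa_{n-j}\cdot\frac{1}{\kappa_{n-j}}\cR^{-(n-j)}\alpha=\binom{n}{j}\zeta$, so the bookkeeping closes.

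However, there is a logical dependency you should make explicit. In the paper's Section~\ref{se:proofs}, Theorem~\ref{thm_steiner_functions_dual} is \emph{proved from} Theorem~\ref{thm:steiner_measures_dual}, so quoting it as ``already established'' is circular unless you instead invoke the alternate proof of the functional Steiner formula given in Subsection~\ref{ss:alternate_proof_of_steiner}, which rests on the Hadwiger theorem (Theorem~\ref{thm:hadwiger_convex_functions}). That is a valid resolution, but it means your route requires the full Hadwiger classification as a prerequisite. This is exactly the trade-off: the paper's direct proof is self-contained and in fact produces a new, Hadwiger-free proof of Theorem~\ref{thm:existence_singular_hessian_vals_dual} as a byproduct, whereas yours is quicker once the Hadwiger theorem is available but cannot be used to re-derive it.

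One small slip in the smooth case: matching the two polynomial expansions $\det(\Hess v+r\Hess h_{\Bn})=\sum_j\binom{n}{j}r^{n-j}\det(\Hess v[j],\Hess h_{\Bn}[n-j])$ and $\MA(v+rh_{\Bn};\cdot)=\sum_j\binom{n}{j}r^{n-j}\nhd{j}(v;\cdot)$ shows that the Lebesgue density of $\nhd{j}(v;\cdot)$ away from the origin is $\det(\Hess v[j],\Hess h_{\Bn}[n-j])$ \emph{without} the binomial factor (the $\binom{n}{j}$ cancels); this is also what Theorem~\ref{properties of nhm}~(d) records. If the density carried the extra $\binom{n}{j}$ as written in your proposal, substituting into the first assertion would produce a spurious factor in \eqref{eq:rep_ozz_j_zeta_dual}. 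You should also note, as the paper does via Theorem~\ref{properties of nhm}~(c), that for $v\in C^2(\R^n)$ and $j\ge1$ the point mass of $\nhd{j}(v;\cdot)$ at the origin vanishes because $\partial v(0)$ is a singleton; this is what permits writing the integral against $\nhd{j}$ purely as a Lebesgue integral of the mixed discriminant.
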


\noindent
Here, $\det(A_1,\dots,A_n)$ denotes the mixed discriminant of the symmetric $n\times n$ matrices $A_1,\dots, A_n$. Note that $\Hess h_{\Bn}(x)$ exists for every $x\ne0$ and that \eqref{eq:rep_ozz_j_zeta_dual} is well-defined as a Lebesgue integral. 
Combining Theorem \ref{thm:hadwiger_convex_functions_dual} and Theorem~\ref{thm:steiner_measures_dual}, we obtain the following new version of the Hadwiger theorem for finite-valued convex functions.

\begin{theorem}
A functional $\,\oZ\colon\fconvf\to\R$ is a continuous, dually epi-translation and rotation invariant valuation if and only if there exist functions $\alpha_0,\ldots,\alpha_n\in C_c([0,\infty))$ such that
$$\oZ(v)=\sum_{j=0}^n \int_{\R^n} \alpha_j(|x|) \d \nhd{j}(v;x)$$ 
for every $v\in\fconvf$.
\end{theorem}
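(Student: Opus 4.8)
The plan is to deduce this Hadwiger-type characterization on $\fconvf$ directly from the combination of Theorem~\ref{thm:hadwiger_convex_functions_dual} and Theorem~\ref{thm:steiner_measures_dual}, exactly as the paragraph preceding the statement indicates. The only substantive point is that the integral transform $\zeta\mapsto\alpha$ appearing in Theorem~\ref{thm:steiner_measures_dual}, namely
\[
\alpha(s)=\binom{n}{j}\Big(s^{n-j}\zeta(s)+(n-j)\int_s^\infty t^{n-j-1}\zeta(t)\,\mathrm{d}t\Big),
\]
sets up a bijection between $\Had{j}{n}$ and $C_c([0,\infty))$ for each fixed $j$. Granting this, one direction of the equivalence is immediate: if $\oZ$ is a continuous, dually epi-translation and rotation invariant valuation, then by Theorem~\ref{thm:hadwiger_convex_functions_dual} there are $\zeta_j\in\Had{j}{n}$ with $\oZ=\sum_{j=0}^n\oZZ{j}{\zeta_j}^*$, and applying Theorem~\ref{thm:steiner_measures_dual} to each summand yields $\oZ(v)=\sum_{j=0}^n\int_{\R^n}\alpha_j(|x|)\,\mathrm{d}\nhd{j}(v;x)$ with $\alpha_j\in C_c([0,\infty))$ the transform of $\zeta_j$.

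For the converse I would first check that each functional $v\mapsto\int_{\R^n}\alpha_j(|x|)\,\mathrm{d}\nhd{j}(v;x)$ is itself a continuous, dually epi-translation and rotation invariant valuation. Continuity and the valuation property follow from the corresponding properties of mixed Monge--Amp\`ere measures established in Section~\ref{se:ma} (weak continuity of $v\mapsto\nhd{j}(v;\cdot)$ under epi-convergence, and the valuation/linearity identities for $\MA$), together with the fact that $\alpha_j$ has compact support so the integral is well-defined and finite; dual epi-translation invariance holds because $\MA(v;\cdot)$ is unchanged when a linear functional or constant is added to $v$ (the Hessian is unaffected), and rotation invariance follows because $\alpha_j(|\cdot|)$ is radial and $h_{\Bn}$ is rotation invariant. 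Granting these, each summand is admissible, hence so is $\oZ=\sum_{j=0}^n\int_{\R^n}\alpha_j(|x|)\,\mathrm{d}\nhd{j}(v;x)$; this is the (easy) sufficiency direction.

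Alternatively, and more in the spirit of the remark following the statement, the converse can be obtained purely formally: given $\alpha_j\in C_c([0,\infty))$, surjectivity of the transform produces $\zeta_j\in\Had{j}{n}$ mapping to $\alpha_j$, whence $\int_{\R^n}\alpha_j(|x|)\,\mathrm{d}\nhd{j}(v;x)=\oZZ{j}{\zeta_j}^*(v)$ by Theorem~\ref{thm:steiner_measures_dual}, and then $\oZ=\sum_j\oZZ{j}{\zeta_j}^*$ is a continuous, dually epi-translation and rotation invariant valuation by Theorem~\ref{thm:hadwiger_convex_functions_dual}. Thus the new statement is literally equivalent to Theorem~\ref{thm:hadwiger_convex_functions_dual} once the bijectivity of $\zeta\mapsto\alpha$ is known.

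The main obstacle — really the only one — is therefore establishing that $\zeta\mapsto\alpha$ is a bijection from $\Had{j}{n}$ onto $C_c([0,\infty))$. Injectivity is easy: if $\alpha\equiv0$ then $s^{n-j}\zeta(s)=-(n-j)\int_s^\infty t^{n-j-1}\zeta(t)\,\mathrm{d}t$; differentiating shows $t^{n-j}\zeta(t)$ satisfies a first-order linear ODE forcing it to be a constant multiple of $t^{n-j}$, and the integrability/vanishing conditions in the definition of $\Had{j}{n}$ kill the constant. For surjectivity one inverts the transform explicitly — one expects a formula of the shape $\zeta(s)=s^{-(n-j)}\big(\tbinom{n}{j}^{-1}\alpha(s)+c\int_0^s t^{n-j-1}\alpha(t)\,\mathrm{d}t\big)$ or similar — and then verifies that the resulting $\zeta$ lies in $\Had{j}{n}$, i.e.\ is continuous and bounded on $(0,\infty)$ with $s^{n-j}\zeta(s)\to0$ and $\int_s^\infty t^{n-j-1}\zeta(t)\,\mathrm{d}t$ convergent as $s\to0^+$; all of this uses only that $\alpha$ is continuous with compact support. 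This is precisely the content of Lemma~\ref{le:r_kln} referenced earlier in connection with Corollary~\ref{co:def_steiner}, so I would simply invoke it. With that lemma in hand, the proof is a two-line assembly of Theorems~\ref{thm:hadwiger_convex_functions_dual} and~\ref{thm:steiner_measures_dual}.
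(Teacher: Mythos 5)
Your proof is correct and takes essentially the same route as the paper: the paper's entire argument is the one sentence following the theorem, namely that the statement is equivalent to Theorem~\ref{thm:hadwiger_convex_functions_dual} via the bijectivity of the transform $\zeta\mapsto\alpha=\binom{n}{j}\cR^{n-j}\zeta$ from $\Had{j}{n}$ onto $C_c([0,\infty))$, which is exactly what Lemma~\ref{le:r_kln} provides (with $\Had{j}{j}$ identified with $C_c([0,\infty))$), combined with the representation in Theorem~\ref{thm:steiner_measures_dual}. Your side remarks about inverting the transform by hand are not quite right (the inversion integral runs from $s$ to $\infty$, not from $0$ to $s$), but you correctly note that you would simply invoke Lemma~\ref{le:r_kln} rather than rederive it, so this is immaterial; your alternative ``direct'' verification of the sufficiency direction via Proposition~\ref{prop:mixed_hessian_vals} and rotation covariance of $\nhd{j}$ is also fine and is a harmless variation.
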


\noindent
By properties of the integral transform from Theorem~\ref{thm:steiner_measures_dual}  which maps $\zeta$ to $\alpha$, this version is equivalent to Theorem~\ref{thm:hadwiger_convex_functions_dual}.

\section{Preliminaries}
We work in $n$-dimensional Euclidean space $\R^n$, with $n\ge 1$, endowed with the Euclidean norm $\vert \cdot\vert $ and the usual scalar product
$\langle \cdot,\cdot\rangle$. We also use coordinates, $x=(x_1,\dots,x_n)$, for $x\in\R^n$. Let $\Bn\!:=\{x\in\R^n\colon \vert x\vert \le 1\}$ be the Euclidean unit ball and $\sn$ the unit sphere in $\R^n$. 
A basic reference on convex bodies is the book by Schneider \cite{Schneider:CB2}.

\subsection{Mixed Discriminants}
We will need some basic definitions and properties which can be found in Section 5.5 of the book by Schneider \cite{Schneider:CB2}.
Given symmetric $n\times n$ matrices $A_k = (a_{ij}^k)$ for $1\leq k \leq n$, their \emph{mixed discriminant} is defined as
$$\det(A_1,\ldots,A_n) := \frac{1}{n!} \sum_{\sigma} \,\det \left(
\begin{array}{ccc}
a_{11}^{\sigma(1)} & \cdots & a_{1n}^{\sigma(n)} \\
\vdots &  & \vdots \\
a_{n1}^{\sigma(1)} & \cdots & a_{nn}^{\sigma(n)} \\
\end{array}
\right)
$$
where we sum over  all permutations $\sigma$ of $\{1,\ldots,n\}$. As a consequence of this definition, the mixed discriminant $\det$ is multilinear and symmetric in its entries. Alternatively, the mixed discriminant is uniquely determined as the symmetric functional that satisfies
\begin{equation}\label{polarisation md}
\det(\lambda_1 A_1 + \cdots + \lambda_m A_m) = \sum_{i_1,\ldots,i_n=1}^m \lambda_{i_1}\cdots \lambda_{i_n} \det(A_{i_1},\ldots,A_{i_n})
\end{equation}
for all $\lambda_1,\ldots,\lambda_m\in\R$, symmetric $n\times n$ matrices $A_1,\ldots,A_m$ and $m\ge1$. By the polarization formula, the mixed determinant can also be written as
\begin{equation}\label{polarization_md}
\det(A_1,\dots,A_n)=\frac1{n!}\sum_{k=1}^n\ \sum_{1\le j_1<\dots<j_k\le n}(-1)^{n-k}\det(A_{j_1}+\dots+A_{j_k})
\end{equation}
for symmetric $n\times n$ matrices $A_1,\ldots,A_n$ (see, for example, \cite[Theorem 4]{Bapat}).
In addition,
there exist maps $D_{ij}\colon(\R^{n\times n})^{n-1}\to\R$ for $1\leq i,j\leq n$ such that
\begin{equation}\label{Dij}
\det(A_1,\ldots,A_n)=\sum_{i,j=1}^n D_{ij}(A_1,\ldots,A_{n-1})\,a_{ij}^n
\end{equation}
for all symmetric $n\times n$ matrices $A_1,\ldots,A_n$.
We remark that it follows from \eqref{polarisation md} that
\begin{equation}
\label{eq:mixed_dis_hessian}
[A]_j =\binom{n}{j}\det(A[j],I_n[n-j])
\end{equation}
for every symmetric $n\times n$ matrix $A$, where $I_n$ is the $n\times n$ identity matrix. If the symmetric matrix $A$ is, in addition, invertible, then 
\begin{equation}
\label{eq:jtrace}
[A]_j=\det(A)\,[A^{-1}]_{n-j}
\end{equation}
for $0\le j\le n$.

\subsection{Convex Functions}\label{ss:convex functions}
We collect some basic results and properties of convex functions. Standard references are the books by Rockafellar \cite{Rockafellar} and Rockafellar \& Wets \cite{RockafellarWets}.

Let $\fconvx$  be the set of proper, lower semicontinuous, convex functions $u\colon\R^n\to(-\infty,\infty]$, where $u$ is proper if $u\not\equiv +\infty$. 
For $t\in\R$, we write
$$\{u\leq t\}:=\{x\in\R^n\colon u(x)\leq t\}$$
for the \emph{sublevel sets} of $u$. If $u\in\fconvs$, then $u$ attains its minimum and 
we set 
$$\argmin u:=\{x\in\R^n\colon u(x)=\min\nolimits_{z\in\R^n} u(z)\}.$$
This is a convex body which, if in addition $u\in C^2_+(\R^n)$, consists of a single point.

The standard topology on $\fconvx$ and its subsets is induced by epi-convergence. A sequence of functions $u_k\in\fconvx$ is \emph{epi-convergent} to $u\in\fconvx$ if for every $x\in\R^n$:
\begin{enumerate}
    \item[(i)] $u(x)\leq \liminf_{k\to\infty} u_k(x_k)$ for every sequence $x_k\in\R^n$ that converges to $x$;
    \item[(ii)] $u(x)=\lim_{k\to\infty} u_k(x_k)$ for at least one sequence $x_k\in\R^n$ that converges to $x$.
\end{enumerate}
Note that the limit of an epi-convergent sequence of functions from $\fconvx$ is always lower semi\-continuous. 

\goodbreak
For $u\in\fconvx$, we denote by $u^*\in\fconvx$ its \emph{Legendre--Fenchel transform} or \emph{convex conjugate}, which is defined by
$$u^*(y):=\sup\nolimits_{x\in\R^n} \big(\langle x,y \rangle - u(x) \big)$$
for $y\in\R^n$. Since $u$ is lower semicontinuous, we have $u^{**}=u$. 
For a convex body $K\in\cK^n$, we denote by $\ind_K\in\fconvs$ its \emph{convex indicator function}, which is defined as
$$\ind_K(x):=\begin{cases}
0\quad &\text{for } x\in K,\\
+\infty\quad &\text{for } x\not\in K.\end{cases}$$
\goodbreak\noindent
We have
$$\ind_K^*= h_K,$$
where $h_K\colon\R^n\to \R$ is the \emph{support function} of $K$, defined as
$$h_K(x):=\max\nolimits_{y\in K}\langle x,y \rangle.$$

For  $u_1,u_2\in\fconvs$, we denote by $u_1\infconv u_2\in\fconvs$ their \emph{infimal convolution} or \emph{epi-sum} which is defined as
$$(u_1\infconv u_2)(x):=\inf\nolimits_{x_1+x_2=x} u_1(x_1)+u_2(x_2)$$
for $x\in\R^n$. 
The \emph{epi-multiplication} of $u\in\fconvs$ by $\lambda>0$ is defined in the following way.  We set
$$\lambda\sq u(x):=\lambda\, u\left( \frac x\lambda \right)$$
for $x\in\R^n$ and note that $\lambda\sq u\in\fconvs$. This corresponds to rescaling the epi-graph of $u$ by the factor $\lambda$, that is, $\epi \lambda\sq u=\lambda \epi u$.

\begin{proposition}\label{basics} The following properties hold.
\begin{enumerate}[label=\emph{(\alph*)}, ref={(\alph*)}, leftmargin=18pt]
\item The function $u\in\fconvs$ if and only if $u^*\in\fconvf$.
\item The function $u\in\fconvs\cap C^2_+(\R^n)$ if and only if $u^*\in \fconvs\cap C^2_+(\R^n)$.  
\item If $u_1, u_2\in\fconvs$ are such that $u_1\vee u_2$ and $u_1\wedge u_2$ are in $\fconvs$, then $u_1^*\vee u_2^*$ and $ u_1^*\wedge u_2^*$ are in $\fconvf$ and
$$
(u_1\vee u_2)^*=u_1^*\wedge u_2^*,\quad\quad (u_1\wedge u_2)^*=u_1^*\vee u_2^*.
$$
\item For $u_1, u_2\in\fconvs$ and  $\lambda_1,\lambda_2>0$,
$$
(\lambda_1\sq u_1\infconv \lambda_2\sq u_2)^*=\lambda_1 u_1^*+\lambda_2 u_2^*.
$$
\item The sequence $u_k$ in $\fconvs$ epi-converges to $u\in\fconvs$, if and only if the sequence $u^*_k$ in $\fconvf$ epi-converges to $u^*\in\fconvf$.

\end{enumerate}
\end{proposition}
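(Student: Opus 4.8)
The plan is to obtain all five statements from classical properties of the Legendre--Fenchel transform on $\fconvx$, above all the involution $u^{**}=u$, together with the standard characterization of super-coercivity. I begin with (a). Since $u\mapsto u^*$ maps $\fconvx$ bijectively onto itself, and since a proper, lower semicontinuous, convex function $u$ satisfies $\lim_{|x|\to\infty}u(x)/|x|=+\infty$ if and only if $\dom u^*=\R^n$, it remains only to note that a finite-valued convex function on $\R^n$ is automatically continuous, hence lower semicontinuous and proper. Combining these facts yields $u\in\fconvs\iff u^*\in\fconvf$, and (a) is then available for use in the remaining parts.

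For (b), I first observe that $\fconvs\cap C^2_+(\R^n)\subseteq\fconvf$, because a function of class $C^2(\R^n)$ is real-valued, so the assertion does carry content beyond (a). Given $u\in\fconvs\cap C^2_+(\R^n)$, the gradient map $\nabla u\colon\R^n\to\R^n$ is injective by strict convexity (which follows from $\Hess u\succ0$), surjective because super-coercivity forces the supremum defining $u^*(y)$ to be attained at some $x$, so that $y=\nabla u(x)$, and a local $C^1$-diffeomorphism by the inverse function theorem; hence it is a global $C^1$-diffeomorphism of $\R^n$. Then $u^*(y)=\langle(\nabla u)^{-1}(y),y\rangle-u((\nabla u)^{-1}(y))$ with $\nabla u^*=(\nabla u)^{-1}$ and $\Hess u^*(y)=(\Hess u((\nabla u)^{-1}(y)))^{-1}$, so $u^*\in C^2(\R^n)$ with positive definite Hessian; and $u^*\in\fconvs$ follows from (a) applied to $u^*$, since $u^{**}=u$ is finite-valued. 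The converse direction is the same assertion applied to $u^*$ in place of $u$.

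Part (c) I reduce to the always-valid identity $(f\wedge g)^*=f^*\vee g^*$, which holds because an infimum inside a conjugate turns into a supremum outside. Applying it to $u_1,u_2$ gives $(u_1\wedge u_2)^*=u_1^*\vee u_2^*$ (and $u_1^*\vee u_2^*\in\fconvf$ as the maximum of two finite convex functions, so this half is immediate), while applying it to $u_1^*,u_2^*$ together with $u_i^{**}=u_i$ gives $(u_1^*\wedge u_2^*)^*=u_1\vee u_2$; re-conjugating the latter yields $(u_1\vee u_2)^*=(u_1^*\wedge u_2^*)^{**}\le u_1^*\wedge u_2^*$, with equality precisely when $u_1^*\wedge u_2^*$ is convex (it is already continuous, being the minimum of two continuous functions). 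The convexity of $u_1^*\wedge u_2^*$ --- equivalently, the reverse inequality $(u_1\vee u_2)^*(y)\ge\min(u_1^*(y),u_2^*(y))$ --- is the one substantive point, and I expect it to be the main obstacle. Here I would use the full hypothesis that $u_1\vee u_2$ and $u_1\wedge u_2$ both lie in $\fconvs$: for a given $y$, super-coercivity of $u_1\vee u_2$ produces $x$ attaining the supremum in $(u_1\vee u_2)^*(y)$, and then one needs $y\in\partial u_1(x)\cup\partial u_2(x)$; at points with $u_1(x)\ne u_2(x)$ this is clear from local continuity (so that $u_1\vee u_2$ coincides with one of the $u_i$ near $x$), while at coincidence points $u_1(x)=u_2(x)$ it follows from the inclusion $\partial(u_1\vee u_2)(x)\subseteq\partial u_1(x)\cup\partial u_2(x)$, which is forced by convexity of both $u_1\vee u_2$ and $u_1\wedge u_2$ at such points. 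Once this is settled, both displayed identities of (c) follow.

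Finally, (d) is a direct computation: $(u_1\infconv u_2)^*=u_1^*+u_2^*$ is the standard conjugate-of-infimal-convolution formula, and $(\lambda\sq u)^*=\lambda u^*$ follows from the substitution $x=\lambda z$ in the definition of the conjugate, whence $(\lambda_1\sq u_1\infconv\lambda_2\sq u_2)^*=(\lambda_1\sq u_1)^*+(\lambda_2\sq u_2)^*=\lambda_1 u_1^*+\lambda_2 u_2^*$; all functions lie in the stated spaces by (a) and the facts, recalled in the text, that $\lambda\sq u$ and $u_1\infconv u_2$ belong to $\fconvs$ while $\fconvf$ is closed under addition. For (e) I invoke the classical bicontinuity of the Legendre--Fenchel transform with respect to epi-convergence (see Rockafellar--Wets \cite[Theorem~11.34]{RockafellarWets}): $u_k$ epi-converges to $u$ if and only if $u_k^*$ epi-converges to $u^*$; combined with (a) to track the ambient spaces, this gives the stated equivalence.
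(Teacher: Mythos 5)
The paper does not supply a proof of Proposition~\ref{basics}; it is presented as a compilation of standard facts from convex analysis with references to Rockafellar~\cite{Rockafellar} and Rockafellar--Wets~\cite{RockafellarWets}. So there is no in-paper argument to compare against, and the question is simply whether your proof is sound.

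Parts (a), (b), (d), (e) are correct as given: (a) is the standard equivalence between super-coercivity of $u$ and finiteness of $\dom u^*$ together with $u^{**}=u$; (b) is the textbook argument that $\nabla u$ is a global $C^1$-diffeomorphism with $\nabla u^*=(\nabla u)^{-1}$ and $\Hess u^*=(\Hess u)^{-1}\circ(\nabla u)^{-1}$; (d) combines $(f\infconv g)^*=f^*+g^*$ with $(\lambda\sq u)^*=\lambda u^*$; and (e) is Wijsman's theorem on bicontinuity of conjugation under epi-convergence.

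Part (c) is where your argument has a real, though fillable, gap. You correctly reduce the matter to the pointwise inequality $(u_1\vee u_2)^*(y)\ge\min\bigl(u_1^*(y),u_2^*(y)\bigr)$ and correctly pinpoint, as the crux at a coincidence point $u_1(x)=u_2(x)$, the inclusion $\partial(u_1\vee u_2)(x)\subseteq\partial u_1(x)\cup\partial u_2(x)$. But you assert that this inclusion ``is forced by convexity of both $u_1\vee u_2$ and $u_1\wedge u_2$'' without an argument. This is not routine: the general subdifferential formula at coincidence points gives $\partial(u_1\vee u_2)(x)=\operatorname{conv}\bigl(\partial u_1(x)\cup\partial u_2(x)\bigr)$, which is in general strictly larger than the union, so the inclusion you need is genuinely a consequence of the extra hypothesis that $u_1\wedge u_2$ is also convex. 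A proof can go as follows: suppose $y\in\partial(u_1\vee u_2)(x)$ with $c:=u_1(x)=u_2(x)$, and suppose, for contradiction, there exist $z$ with $u_1(z)<c+\langle y,z-x\rangle$ and $w$ with $u_2(w)<c+\langle y,w-x\rangle$. At $z$ one then has $u_2(z)>u_1(z)$, and at $w$ one has $u_1(w)>u_2(w)$; both $u_1,u_2$ are finite on the segment $[z,w]$, hence continuous there, so by the intermediate value theorem there is $p=(1-s)z+sw$ on the open segment with $u_1(p)=u_2(p)$. Convexity of $u_1\wedge u_2$ yields
$$(u_1\vee u_2)(p)=(u_1\wedge u_2)(p)\le(1-s)u_1(z)+s\,u_2(w)<c+\langle y,p-x\rangle,$$
contradicting $y\in\partial(u_1\vee u_2)(x)$. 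Your remark for the non-coincidence case also deserves a little more care than ``local continuity'': if $u_1(x)>u_2(x)$, then on the segment from $x$ toward any $z$ one has $u_1>u_2$ for small $t$, whence $\frac{u_1(x+t(z-x))-u_1(x)}{t}\ge\langle y,z-x\rangle$ for small $t>0$, and monotonicity of convex difference quotients extends this to $t=1$, giving $y\in\partial u_1(x)$ without needing $u_1\vee u_2$ to agree with $u_1$ on a full open neighborhood of $x$ (which can fail at the boundary of $\dom u_1$). With these two points filled in, your proof of (c) is complete and the whole proposition is correctly handled.
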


We say that a functional $\oZ\colon \fconvs\to\R$ is \emph{epi-homogeneous of degree $j$} if
$$\oZ(\lambda \sq u)=\lambda^j \oZ(u)$$
for every $\lambda >0$ and $u\in\fconvs$. A functional $\oZ\colon \fconvf\to\R$ is \emph{homogeneous of degree $j$} if
$$\oZ(\lambda v)=\lambda^j \oZ(v)$$
for every $\lambda >0$ and $v\in\fconvf$. It is a consequence of Proposition~\ref{basics} that a map\linebreak$\oZ:\fconvs\to\R$ is a continuous valuation that is epi-homogeneous of degree $j$ if and only if $v\mapsto \oZ(v^*)$ is a continuous valuation on $\fconvf$ that is homogeneous of degree $j$. We say that $\oZ\colon\fconvs\to \R$ is \emph{epi-additive} if
$$\oZ(u_1\infconv u_2)=\oZ(u_1)+\oZ(u_2)$$
for every $u_1, u_2\in \fconvs$. The dual notion is additivity on $\fconvf$, where a functional $\oZ\colon\fconvf\to \R$ is \emph{additive} if
$$\oZ(v_1+ v_2)=\oZ(v_1)+\oZ(v_2)$$
for every $v_1, v_2\in \fconvf$.

\subsection{The Integral Transform $\cR$}\label{ss:integral transform}

In \cite{Colesanti-Ludwig-Mussnig-6}, the integral transform $\cR$ and its inverse $\cR^{-1}$ were introduced. For $\zeta\in C_b((0,\infty))$ and $s>0$, let
\begin{equation*}
\cR \zeta(s):= s\, \zeta(s) + \int_s^{\infty}  \zeta(t)\d t.
\end{equation*}
It is easy to see that also $\cR\zeta\in C_b((0,\infty))$.

For $l\ge 1$, we write
$$\cR^l \zeta:=\underbrace{(\cR \circ \cdots \circ \cR)}_{l} \zeta$$ 
and set $\cR^0 \zeta \!:= \zeta$.  We set $\cR^{-l}=(\cR^{-1})^l$ for $l\ge1$. We require the following result.

\goodbreak
\begin{lemma}[\!\cite{Colesanti-Ludwig-Mussnig-6}, Lemma 3.5 and Lemma 3.7]
\label{le:r_kln}
For $0\leq k \leq n$ and $0\leq l \leq n-k$, the map $\cR^l\colon\Had{k}{n}\to\Had{k}{n-l}$ is a bijection. Furthermore, 
$$\cR^l \zeta(s) = s^l \zeta(s) + l \int_s^{\infty} t^{l-1} \zeta(t) \d t$$
for every $\zeta\in \Had{k}{n}$ and $s>0$, while
\begin{equation*}
	\cR^{-l} \rho(s)=\frac{\rho(s)}{s^{l}} -l \int_s^{\infty} \frac{\rho(t)}{t^{l+1}} \d t
\end{equation*}
for every $\rho\in \Had{k}{n-l}$ and $s>0$.
\end{lemma}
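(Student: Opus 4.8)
The plan is to reduce everything to the case $l=1$. Once we show that $\cR\colon\Had{k}{m}\to\Had{k}{m-1}$ is a well-defined bijection whenever $k<m\le n$, then $\cR^l\colon\Had{k}{n}\to\Had{k}{n-l}$ is a bijection as a composition of such maps, since for $0\le l\le n-k$ the intermediate indices run through $\{n-l+1,\dots,n\}\subseteq\{k+1,\dots,n\}$. The two explicit formulas then follow by induction on $l$: assuming $\cR^l\zeta(s)=s^l\zeta(s)+l\int_s^\infty t^{l-1}\zeta(t)\d t$, one applies the definition of $\cR$ to $\cR^l\zeta$ and uses Fubini's theorem to simplify the resulting iterated integral, which yields the formula for $\cR^{l+1}\zeta$; the formula for $\cR^{-l}$ is derived in the same way once the formula for $\cR^{-1}$ is available (see below).

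It remains to prove that $\cR\colon\Had{k}{m}\to\Had{k}{m-1}$ is a well-defined bijection for $k<m\le n$. First, $\cR\zeta$ is continuous, and since $\zeta$ vanishes on $[R,\infty)$ for some $R>0$, so does $\cR\zeta$; in particular $\int_s^\infty\zeta(t)\d t=\int_s^R\zeta(t)\d t$ is finite for each $s>0$, so $\cR\zeta\in C_b((0,\infty))$. The substantive point is the behaviour as $s\to0^+$. Put $F(s):=\int_s^\infty t^{m-k-1}\zeta(t)\d t$, so $F(0^+)$ exists and is finite by the definition of $\Had{k}{m}$. If $m-1=k$, then $\Had{k}{m-1}=\Had{k}{k}$ only asks that $\lim_{s\to0^+}\cR\zeta(s)$ exist, which follows from $\cR\zeta(s)=s\zeta(s)+\int_s^\infty\zeta(t)\d t$ together with $s\zeta(s)\to0$ and $\int_s^\infty\zeta(t)\d t\to F(0^+)$, both valid since $\zeta\in\Had{k}{k+1}$. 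If $m-1>k$, one must show that $s^{m-1-k}\cR\zeta(s)\to0$ and that $\lim_{s\to0^+}\int_s^\infty t^{m-k-2}\cR\zeta(t)\d t$ exists and is finite. The key fact is that $s^{m-1-k}\int_s^\infty\zeta(t)\d t\to0$, which follows from l'H\^opital's rule applied to the quotient of $\int_s^\infty\zeta(t)\d t$ by $s^{k+1-m}$ (the latter tending to $+\infty$), since the ratio of derivatives equals $s^{m-k}\zeta(s)/(m-1-k)\to0$. Granting this, the first limit follows from $s^{m-1-k}\cR\zeta(s)=s^{m-k}\zeta(s)+s^{m-1-k}\int_s^\infty\zeta(t)\d t$, and the second follows by expanding $\cR\zeta$, applying Fubini's theorem to the double integral, and identifying the pieces as $F(s)$ and the quantity just shown to vanish.

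For bijectivity I would invert $\cR$ explicitly by solving $\cR\zeta=\rho$. With $H(s):=\int_s^\infty\zeta(t)\d t$, the equation $s\zeta(s)+H(s)=\rho(s)$ becomes the linear ODE $H'(s)-\frac1sH(s)=-\frac{\rho(s)}{s}$, with integrating factor $1/s$, so $\big(H(s)/s\big)'=-\rho(s)/s^2$. Since $H(s)\to0$ as $s\to\infty$ (as $\rho$, hence $\zeta$, has bounded support), integrating from $s$ to $\infty$ gives $H(s)=s\int_s^\infty\rho(t)/t^2\d t$, whence $\zeta(s)=-H'(s)=\rho(s)/s-\int_s^\infty\rho(t)/t^2\d t$. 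This both forces the claimed formula for $\cR^{-1}$ and shows that $\cR$ is injective. Conversely, for $\rho\in\Had{k}{m-1}$ one checks that $s\mapsto\rho(s)/s-\int_s^\infty\rho(t)/t^2\d t$ defines an element of $\Had{k}{m}$ — a boundary analysis at $0^+$ entirely parallel to the previous paragraph — and that applying $\cR$ to it returns $\rho$, by the fundamental theorem of calculus and Fubini's theorem. Hence $\cR\colon\Had{k}{m}\to\Had{k}{m-1}$ is bijective, and the lemma follows by the reduction of the first paragraph together with the induction described there.

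The main obstacle is the boundary analysis at the origin: essentially all the content of the lemma is the assertion that $\cR$ and $\cR^{-1}$ preserve the vanishing and finiteness conditions defining the spaces $\Had{k}{m}$, that is, the control of quantities such as $s^{m-1-k}\int_s^\infty t^{l-1}\zeta(t)\d t$ as $s\to0^+$ when $\zeta$ is known only to satisfy the weaker bounds of $\Had{k}{m}$; one must also keep the top case, where the target is the exceptional space $\Had{m-1}{m-1}$, treated separately throughout.
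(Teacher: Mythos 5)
The paper cites this lemma from the companion paper \cite{Colesanti-Ludwig-Mussnig-6} without reproducing a proof, so a line-by-line comparison is not possible; what follows is an assessment of your argument on its own merits.

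Your proof is correct and complete in its essentials. The reduction to $l=1$ is valid (the intermediate spaces $\Had{k}{m}$ with $n-l+1\le m\le n$ all satisfy $m>k$ when $l\le n-k$), the closed formulas for $\cR^l$ and $\cR^{-l}$ follow by the Fubini induction you describe (verified: starting from $\cR^l\zeta(s)=s^l\zeta(s)+l\int_s^\infty t^{l-1}\zeta(t)\,\d t$, one application of $\cR$ and one Fubini swap gives $\cR^{l+1}$; the same pattern runs for $\cR^{-l}$ once $\cR^{-1}$ is known), and the ODE derivation of the inverse is sound: from $s\zeta(s)+H(s)=\rho(s)$ with $H(s)=\int_s^\infty\zeta$, the integrating factor $1/s$ and the boundary condition $H(s)\to0$ (from bounded support) give $\zeta(s)=\rho(s)/s-\int_s^\infty\rho(t)/t^2\,\d t$, which both forces the formula and proves injectivity. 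The key boundary estimates you identify are exactly where the content lies: the l'H\^opital argument (in its ``anything over $\infty$'' form, since the numerator need not blow up) gives $s^{m-1-k}\int_s^\infty\zeta(t)\,\d t\to0$, and Fubini reduces $\int_s^\infty t^{m-k-2}\cR\zeta(t)\,\d t$ to a combination of $\int_s^\infty t^{m-k-1}\zeta(t)\,\d t$ and the quantity just shown to vanish. The top case $m-1=k$, where the target is the exceptional space $\Had{k}{k}$, is correctly isolated and handled.

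Two places where the writing is terse and could be tightened if you expand this. First, for surjectivity in the case $m-1=k$ you need $s\zeta(s)\to0$ where $\zeta=\cR^{-1}\rho$ and $\rho$ merely has a finite limit $\rho(0^+)$; the cancellation $\rho(s)-s\int_s^\infty\rho(t)/t^2\,\d t\to\rho(0^+)-\rho(0^+)=0$ requires a small argument (split $\rho=\rho(0^+)+(\rho-\rho(0^+))$ and estimate), which is not quite ``entirely parallel'' to the forward direction. Second, for the integral condition on $\zeta=\cR^{-1}\rho$ when $m-1>k$, the cleanest route is to note that the closed formula for $\cR^{m-k}$ (valid for any $\zeta\in C_b((0,\infty))$) gives $\int_s^\infty t^{m-k-1}\zeta(t)\,\d t=\frac{1}{m-k}\bigl(\cR^{(m-1)-k}\rho(s)-s^{m-k}\zeta(s)\bigr)$, and both terms on the right have finite limits at $0^+$ by what you have already shown. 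Neither of these affects the correctness of your outline, but they are worth spelling out.

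Finally, the edge cases $l=0$ (identity map, trivially a bijection) and $k=n$ (forcing $l=0$) are trivial but should be acknowledged for completeness.
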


For $t\geq 0$, let $u_t\in\fconvs$ be given by 
$$u_t(x):=t|x|+\ind_{\Bn}(x)$$ 
for $x\in\R^n$.
The next result shows that the transform $\cR$ naturally occurs when studying functional intrinsic volumes.

\begin{lemma}[\!\cite{Colesanti-Ludwig-Mussnig-5}, Lemma 2.15 and Lemma 3.24]
\label{le:calc_ind_bn_tx_theta_i}
	If $\,1\leq j \leq n$ and  $\zeta\in\Had{j}{n}$, then  
	$$\oZZ{j}{\zeta}(u_t)=\kappa_n \binom{n}{j} \cR^{n-j} \zeta(t)$$
	for $t\geq 0$.
\end{lemma}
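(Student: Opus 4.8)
The plan is to move to the dual side and compute directly. By the duality $\oZZ{j}{\zeta}(u)=\oZZd{j}{\zeta}(u^*)$ recorded after Theorem~\ref{thm:existence_singular_hessian_vals_dual} (applied to $u=u_t$, using $u_t^{**}=u_t$), it suffices to evaluate $\oZZd{j}{\zeta}$ at $v_t:=u_t^*$. Since $u_t=t\,h_{\Bn}+\ind_{\Bn}$, since $(t\,h_{\Bn})^*=\ind_{t\Bn}$ and $\ind_{\Bn}^*=h_{\Bn}$, and since addition is dual to infimal convolution,
\[
v_t(y)=(\ind_{t\Bn}\infconv h_{\Bn})(y)=\operatorname{dist}(y,t\Bn)=(|y|-t)_+\qquad(y\in\R^n),
\]
which is a finite-valued radial convex function. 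Moreover $t\mapsto\oZZ{j}{\zeta}(u_t)$ is continuous (since $u_t\to u_{t_0}$ epi-convergently as $t\to t_0$) and $\cR^{n-j}\zeta\in\Had{j}{j}$ extends continuously to $s=0$ by Lemma~\ref{le:r_kln}; hence it is enough to prove the identity for $t>0$, the case $t=0$ then following by continuity.

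Fix $t>0$. I would approximate $v_t$ by radial smooth functions: choose convex $g_k\in C^\infty([0,\infty))$, even near $0$ with $g_k'(0)=0$, with $g_k''>0$ and $g_k'>0$ on $(0,\infty)$, equal to a constant plus a strictly convex perturbation of size $1/k$ on $[0,t/2]$, with $g_k\to(\,\cdot\,-t)_+$ locally uniformly on $[0,\infty)$ and $\sup_k\sup_{r\ge0}g_k'(r)<\infty$; then $v_k:=g_k(|\cdot|)\in\fconvf\cap C^2_+(\R^n)$ and $v_k\to v_t$ epi-convergently, so by continuity of $\oZZd{j}{\zeta}$ and \eqref{eq:rep_ozz_c2_dual},
\[
\oZZd{j}{\zeta}(v_t)=\lim_{k\to\infty}\int_{\R^n}\zeta(|y|)\,[\Hess v_k(y)]_j\,\d y.
\]
The eigenvalues of $\Hess v_k(y)$ at $|y|=r>0$ are $g_k''(r)$ once and $g_k'(r)/r$ with multiplicity $n-1$, so passing to polar coordinates and using $g_k''\,(g_k')^{j-1}=\tfrac1j\big((g_k')^{j}\big)'$ turns the integral into
\[
n\kappa_n\binom{n-1}{j}\int_0^\infty\zeta(r)\,(g_k')^{j}(r)\,r^{n-1-j}\,\d r\;+\;\frac{n\kappa_n}{j}\binom{n-1}{j-1}\int_0^\infty H(r)\,\d\mu_k(r),
\]
where $H(r):=\zeta(r)\,r^{n-j}$ and $\mu_k:=\big((g_k')^{j}\big)'\,\d r$ is a finite nonnegative measure.

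Next I pass to the limit. In the first integral, $(g_k')^{j}\to\mathbf{1}_{\{r>t\}}$ pointwise away from $r=t$ and boundedly, while on $[0,t/2]$ it is of order $1/k^{j}$ and $\int_0|\zeta(r)|r^{n-1}\d r<\infty$ (a consequence of $r^{n-j}\zeta(r)\to0$ and $j\ge1$, needed only for $j<n$); splitting $\int_0^{t/2}+\int_{t/2}^\infty$ and using dominated convergence on $[t/2,\infty)$ shows this integral tends to $n\kappa_n\binom{n-1}{j}\int_t^\infty\zeta(r)r^{n-1-j}\d r$. For the second integral, $\mu_k([0,a])=(g_k')^{j}(a)\to\mathbf{1}_{\{a>t\}}$ for every $a\ne t$ and the total mass of $\mu_k$ tends to $1$, so $\mu_k\rightharpoonup\delta_t$ on $[0,\infty]$ with no escape of mass; since $H$ is continuous at $t>0$ (and extends continuously to $[0,\infty]$ using $\zeta\in\Had{j}{n}$ and its bounded support), $\int_0^\infty H\,\d\mu_k\to H(t)=t^{n-j}\zeta(t)$. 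Hence $\oZZd{j}{\zeta}(v_t)=n\kappa_n\binom{n-1}{j}\int_t^\infty\zeta(r)r^{n-1-j}\d r+\frac{n\kappa_n}{j}\binom{n-1}{j-1}t^{n-j}\zeta(t)$, and using $n\binom{n-1}{j}=(n-j)\binom{n}{j}$, $\tfrac nj\binom{n-1}{j-1}=\binom{n}{j}$, together with $\cR^{n-j}\zeta(t)=t^{n-j}\zeta(t)+(n-j)\int_t^\infty r^{n-j-1}\zeta(r)\d r$ from Lemma~\ref{le:r_kln}, this equals $\kappa_n\binom{n}{j}\cR^{n-j}\zeta(t)$. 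Combined with the first paragraph, this proves the lemma.

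The main obstacle is to pinpoint and evaluate the singular contribution carried by the crease of $v_t$ along $\{|y|=t\}$. The radial second derivative $g_k''$ of the approximants does not converge to a function but, via $g_k''\,(g_k')^{j-1}=\tfrac1j((g_k')^{j})'$, packages into the measures $\mu_k$, which converge weakly to $\delta_t$ and produce exactly the term $t^{n-j}\zeta(t)$; controlling this limit while keeping the singularity of $\zeta$ at $0$ from interfering (handled here by treating $t>0$ first and deducing $t=0$ by continuity) is the technical core. The construction of the smooth radial approximants and the bookkeeping with the binomial coefficients are routine.
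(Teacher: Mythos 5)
The paper does not prove this lemma here; it quotes it from \cite{Colesanti-Ludwig-Mussnig-5} (Lemmas~2.15 and~3.24), and even records the dual version as Lemma~\ref{le:ozzd_v_t}, also by citation. So there is no internal proof to compare against, and your argument has to be judged on its own. It is correct. You pass to the dual, compute $v_t=u_t^*=(|\cdot|-t)_+$, approximate $v_t$ by smooth radial functions $v_k=g_k(|\cdot|)$ in $\fconvf\cap C^2_+(\R^n)$, evaluate $[\Hess v_k]_j$ via the eigenvalues $g_k''(r)$ (once) and $g_k'(r)/r$ (multiplicity $n-1$), pass to polar coordinates, and split the integral into a tangential part (converging to $(n-j)\binom nj\kappa_n\int_t^\infty\zeta(r)r^{n-1-j}\,\d r$) and a radial part, which packages as $\frac{\kappa_n}{j}n\binom{n-1}{j-1}\int H\,\d\mu_k$ with $\mu_k=((g_k')^j)'\,\d r\rightharpoonup\delta_t$, producing $\binom nj\kappa_n\,t^{n-j}\zeta(t)$. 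The binomial identities and Lemma~\ref{le:r_kln} then yield $\kappa_n\binom nj\cR^{n-j}\zeta(t)$, and continuity handles $t=0$. A spot check at $j=n$ via~\eqref{eq:n-hom} gives $\oZZ{n}{\zeta}(u_t)=\kappa_n\zeta(t)$, consistent.

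Two places where the write-up is terser than the argument requires. First, when you say $(g_k')^j$ ``is of order $1/k^j$'' on $[0,t/2]$: the bound you actually need (and implicitly use, since you invoke $\int_0|\zeta(r)|r^{n-1}\,\d r<\infty$ rather than $\int_0|\zeta(r)|r^{n-1-j}\,\d r<\infty$, the latter being divergent) is $(g_k'(r))^j\le C r^j/k^j$, coming from $g_k'(0)=0$ and a uniform bound on $g_k''$ near the origin. This is what defeats the possible singularity $|\zeta(r)|\lesssim r^{-(n-j)}$ allowed by $\zeta\in\Had{j}{n}$, and it is worth stating explicitly. Second, the construction of the approximants $g_k$ is left as a sketch; it is routine, but one should make sure simultaneously that $g_k''(0)>0$ (so $\Hess v_k(0)\succ0$), that $g_k'$ is increasing to a constant $1$ beyond the support of $\zeta$ (so the total mass of $\mu_k$ is controlled and nothing escapes into the support of $H$), and that $g_k''>0$ on $(0,\infty)$. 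With these points made explicit, the proof is complete and self-contained modulo the citations you use (duality, the representation~\eqref{eq:rep_ozz_c2_dual} on $C^2_+$, and Lemma~\ref{le:r_kln}), which is the same collection of inputs the statement itself presupposes.
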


We also require the dual form of the previous result. For $t\ge 0$, we set $v_t\!:=u_t^*$. Note that  
\begin{equation}\label{vt def}
	v_t(x)=\begin{cases}
	0\quad\quad&\mbox{for $\,\vert x\vert \le t$,}\\
	\vert x\vert-t\quad&\mbox{for $\,\vert x\vert>t$,}
	\end{cases}
\end{equation}
for $x\in\R^n$ and $t\ge0$ and that $v_t\in\fconvf$ for $t\ge 0$.

\begin{lemma}[\!\cite{Colesanti-Ludwig-Mussnig-5}]
	\label{le:ozzd_v_t}
	If $\,1\leq j \leq n$ and  $\zeta\in\Had{j}{n}$, then  
	$$\oZZd{j}{\zeta}(v_t)=\kappa_n \binom{n}{j} \cR^{n-j} \zeta(t)$$
	for $t\geq 0$.
\end{lemma}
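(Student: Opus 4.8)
The plan is to obtain Lemma~\ref{le:ozzd_v_t} as the conjugate restatement of its primal counterpart, Lemma~\ref{le:calc_ind_bn_tx_theta_i}. The two ingredients are the identity $v_t=u_t^*$, which is part of the definition \eqref{vt def}, and the duality relation $\oZZd{j}{\zeta}(v)=\oZZ{j}{\zeta}(v^*)$ for $v\in\fconvf$, recorded in Section~\ref{se:results_on_fconvf}.

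First I would note that $u_t\in\fconvs$ for $t\ge 0$ (as already stated in the text) and verify $u_t^*=v_t$. This is a short computation: for $y\in\R^n$,
$$u_t^*(y)=\sup_{|x|\le 1}\big(\langle x,y\rangle-t\,|x|\big)=\sup_{0\le\rho\le 1}\rho\,(|y|-t)=\max\{0,\,|y|-t\},$$
where in the middle step one restricts to $x=\rho\,y/|y|$ along the optimal direction; the right-hand side is exactly $v_t(y)$. Hence $v_t\in\fconvf$ by Proposition~\ref{basics}, and since $u_t$ is lower semicontinuous and convex, $u_t^{**}=u_t$, so that $v_t^*=u_t$.

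The conclusion is then immediate: applying the duality $\oZZd{j}{\zeta}(v)=\oZZ{j}{\zeta}(v^*)$ with $v=v_t$ and then Lemma~\ref{le:calc_ind_bn_tx_theta_i},
$$\oZZd{j}{\zeta}(v_t)=\oZZ{j}{\zeta}(v_t^*)=\oZZ{j}{\zeta}(u_t)=\kappa_n\binom{n}{j}\cR^{n-j}\zeta(t)$$
for every $t\ge 0$, which is the assertion. Here $\cR^{n-j}\zeta(0)$ is understood via the extension of $\cR^{n-j}\zeta\in\Had{j}{j}$ to $0$ guaranteed by Lemma~\ref{le:r_kln}, consistent with $v_0=h_{\Bn}\in\fconvf$. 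There is essentially no obstacle: the statement is a direct transcription of Lemma~\ref{le:calc_ind_bn_tx_theta_i} through convex conjugation, and the only point needing a (routine) check is the formula $u_t^*=v_t$ above.
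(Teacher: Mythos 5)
Your proof is correct, and it is exactly the route the paper intends: the paper introduces Lemma~\ref{le:ozzd_v_t} as ``the dual form of the previous result,'' i.e.\ as the conjugate transcription of Lemma~\ref{le:calc_ind_bn_tx_theta_i} via $v_t=u_t^*$ and the identity $\oZZd{j}{\zeta}(v)=\oZZ{j}{\zeta}(v^*)$, which is precisely what you carry out (including the short verification of $u_t^*=v_t$).
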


\section{Monge--Amp\`ere and Mixed Monge--Amp\`ere Measures}\label{se:ma}
For $w\in\fconvx$, the \emph{subdifferential} of $w$ at $x\in\R^n$ is defined by
$$\partial w(x) := \{y\in\R^n\colon w(z)\geq w(x)+\langle y, z-x\rangle \text{ for all } z\in\R^n \}.$$
Each element of $\partial w(x)$ is called a \emph{subgradient} of $w$ at $x$. If $w$ is differentiable at $x$, then $\partial w(x)=\{\nabla w(x)\}$. 
Given a subset $A$ of $\R^n$, we define the image of $A$ through the subdifferential of $w$ as
$$
\partial w(A):=
\bigcup_{x\in A}\partial w(x).
$$
We write $\vert{\,\cdot\,}\vert$ for $n$-dimensional Lebesgue measure in $\R^n$ and remark that $\vert{\partial w(C)}\vert$ can be infinite for compact sets $C\subset \R^n$ and $w\in\fconvx$. An example is given by
$w\in\fconvx$ defined as
$$
w:=\ind_{\{0\}},
$$
as we have
$$
\partial w(\{0\})=\R^n.
$$
However, on $\fconvf$ we obtain a Radon measure, where a Borel measure $M$ is called a \emph{Radon measure} if $M(C)<+\infty$ for every compact set $C\subset\R^n$.  This is the content of the following result, which is due to Aleksandrov \cite{Aleksandrov} (see  \cite[Theorem 2.3]{Figalli} or \cite[Theorem 1.1.13]{Gutierrez}). Let $\Borel(\R^n)$ be the class of Borel sets in $\R^n$.

\begin{lemma}\label{lemma 0} Let $v\in\fconvf$. If $B\in\Borel(\R^n)$, then the set $\partial v(B)$ is measurable. Moreover, $\,\MA(v;\cdot)\colon \Borel(\R^n)\to [0,\infty]$, defined by
$$
\MA(v;B):=|\partial v(B)|,
$$
is a Radon measure on $\R^n$. 
\end{lemma}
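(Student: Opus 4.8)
The plan is to reproduce Aleksandrov's classical argument (compare \cite{Figalli,Gutierrez}), taking care that on $\fconvf$ everything happens on all of $\R^n$. First I would establish that $\partial v(K)$ is compact whenever $K\subset\R^n$ is compact. Since $v\in\fconvf$ is finite and convex, it is locally Lipschitz; fixing $r>0$, the function $v$ is Lipschitz on $K+r\Bn$ with some constant $L$, and applying the subgradient inequality to $x\in K$, $y\in\partial v(x)\setminus\{0\}$ and the point $x+r\,y/|y|$ gives $r|y|\le Lr$, so $\partial v(K)\subseteq L\Bn$ is bounded. It is closed because the graph of $\partial v$ is closed and $v$ is continuous: if $y_k\in\partial v(x_k)$ with $x_k\in K$ and $y_k\to y$, pass to a subsequence with $x_k\to x\in K$ and let $k\to\infty$ in $v(z)\ge v(x_k)+\langle y_k,z-x_k\rangle$ to get $y\in\partial v(x)$. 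In particular $\partial v(K)$ is Borel with finite Lebesgue measure.

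The second and central step is to control the overlap set
$$Z:=\{y\in\R^n\colon y\in\partial v(x_1)\cap\partial v(x_2)\ \text{for some}\ x_1\neq x_2\}$$
and show $|Z|=0$. Since $v$ is lower semicontinuous, $v=v^{**}$, so $y\in\partial v(x)$ is equivalent to $x\in\partial v^*(y)$; hence $y\in Z$ exactly when $\partial v^*(y)$ contains more than one point, that is, when the convex function $v^*$ is non-differentiable at $y$ (with $\partial v^*(y)\neq\emptyset$). Such $y$ lie in $\dom v^*\subseteq\interno(\dom v^*)\cup\bd(\dom v^*)$; the non-differentiability points of $v^*$ inside $\interno(\dom v^*)$ form a Lebesgue-null set, since a convex function is differentiable almost everywhere (see \cite{Schneider:CB2}), and $\bd(\dom v^*)$ is Lebesgue-null because $\dom v^*$ is convex. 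Hence $|Z|=0$. The crucial consequence is that $\partial v$ is \emph{almost additive}: if $B_1,B_2\subseteq\R^n$ are disjoint, then $\partial v(B_1)\cap\partial v(B_2)\subseteq Z$, so this intersection is Lebesgue-null.

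Third, I would show that $\mathcal S:=\{E\subseteq\R^n\colon\partial v(E)\text{ is Lebesgue measurable}\}$ is a $\sigma$-algebra containing the Borel sets. By the first step it contains all compact sets, hence — since every open set is a countable union of compact sets and $\partial v$ commutes with unions — all open sets, and by the same observation $\mathcal S$ is closed under countable unions. For closure under complements, let $E\in\mathcal S$: from $\partial v(\R^n)=\partial v(E)\cup\partial v(\R^n\setminus E)$ and almost additivity, $\partial v(\R^n\setminus E)$ differs from the measurable set $\partial v(\R^n)\setminus\partial v(E)$ by a subset of $Z$, which is measurable by completeness of Lebesgue measure; hence $\partial v(\R^n\setminus E)$ is measurable. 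Thus $\mathcal S$ is a $\sigma$-algebra containing every open set, so it contains $\Borel(\R^n)$, which gives the measurability assertion of the lemma.

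Finally, $\MA(v;\cdot)$ is nonnegative with $\MA(v;\emptyset)=0$ and is finite on compact sets by the first step. For countable additivity, let $B_i$, $i\in\N$, be disjoint Borel sets and set $A_i:=\partial v(B_i)\setminus Z$: the sets $A_i$ are genuinely pairwise disjoint (a common point of $A_i$ and $A_j$ with $i\neq j$ would lie in $Z$), each satisfies $|A_i|=|\partial v(B_i)|$, and $\bigcup_i\partial v(B_i)$ differs from $\bigcup_iA_i$ only by a subset of $Z$; therefore $\MA(v;\bigcup_iB_i)=|\bigcup_iA_i|=\sum_i|A_i|=\sum_i\MA(v;B_i)$ by countable additivity of Lebesgue measure. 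The main obstacle in the whole argument is the second step, namely proving $|Z|=0$, since this is precisely what makes both the complementation step and the countable additivity go through; the remaining steps are bookkeeping with the monotone, union-preserving set map $\partial v$.
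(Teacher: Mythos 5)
The paper does not prove this lemma itself; it cites Aleksandrov's original work and the expositions in Figalli and Guti\'errez. Your proposal reconstructs precisely that classical argument, and it is correct: compactness of $\partial v(K)$ via local Lipschitzness and closedness of the subdifferential graph, nullity of the overlap set $Z$ via duality (identifying $Z$ with the non-differentiability set of $v^*$, which is null on $\interno(\dom v^*)$, while $\bd(\dom v^*)$ is itself null), the $\sigma$-algebra argument for measurability, and countable additivity by peeling off $Z$. This is the same route as in the cited references, so there is nothing to contrast with the paper beyond noting that you have supplied the details the paper delegates to the literature.
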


\noindent
We will refer to $\MA(v;\cdot)$ as  the \emph{Monge--Amp\`ere measure} of $v$. The notion of Monge--Amp\`ere measure is fundamental in the definition of weak or generalized solutions of the Monge--Amp\`ere equation (see, for instance, \cite{Figalli, Gutierrez, Trudinger-Wang-MA}). 

The following statement gathers properties of  Monge--Amp\`ere measures. Items \ref{MA a} and \ref{MA b} are due to Aleksandrov \cite{Aleksandrov} (or see \cite[Proposition 2.6 and Theorem A.31]{Figalli}) while the valuation property  \ref{MA c} was deduced by Alesker \cite{Alesker_cf} from B\l ocki \cite{Blocki} (or see \cite[Theorem 9.2]{Colesanti-Ludwig-Mussnig-3}). Recall that for a sequence $M_k$  
of Radon measures in $\R^n$, we say that $M_k$ \emph{converges weakly} to a Radon measure $M$ in $\R^n$ if
\begin{equation*}\label{weak convergence}
\lim_{k\to+\infty}\int_{\R^n}\beta(x)\d M_k(x)=\int_{\R^n}\beta (x)\d M(x)
\end{equation*}
for every $\beta\in C_c(\R^n)$ (see, for instance, \cite{Evans-Gariepy}). 

\begin{theorem}\label{properties of MA measures} The following properties hold.
\begin{enumerate}[label=\emph{(\alph*)}, ref={(\alph*)}, leftmargin=18pt]
\item\label{MA a} If $v\in\fconvf$ and $v\in C^2(V)$ on an open set $V\subset \R^n$, then $\MA(v;\cdot)$ is absolutely continuous on $V$ with respect to $n$-dimensional Lebesgue measure and
\begin{equation*}\label{density}
\d\MA(v;x)=\det(\Hess v(x))\d x
\end{equation*}
for $x\in V$.
\item\label{MA b} If $v_j$ is a sequence in $\fconvf$ that is epi-convergent to $v\in\fconvf$, then the sequence of measures $\MA(v_j;\cdot)$ converges weakly to $\MA(v;\cdot)$.
\item\label{MA c} For every $v_1,v_2\in\fconvf$ such that $v_1\wedge v_2\in\fconvf$,
\begin{equation*}\label{MA_additivity}
\MA(v_1;\cdot)+\MA(v_2;\cdot)=\MA(v_1\wedge v_2;\cdot)+\MA(v_1\vee v_2;\cdot),
\end{equation*}
that is, $\MA$ is a (measure-valued) valuation on $\fconvf$.
\end{enumerate}
\end{theorem}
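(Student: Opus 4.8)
All three items are classical, so one option is simply to invoke the references cited above; in the interest of completeness I would record the arguments as follows. For part \emph{(a)}, the plan is to use the area formula. Since $v\in C^2(V)$ is convex, $\partial v(x)=\{\nabla v(x)\}$ for $x\in V$, so $\partial v(B)=\nabla v(B)$ for every Borel set $B\subseteq V$, and $\nabla v\colon V\to\R^n$ is $C^1$ with Jacobian matrix $\Hess v(x)$, which is positive semidefinite by convexity. The area formula then gives $\int_B\det(\Hess v(x))\d x=\int_{\R^n}\#\bigl(B\cap(\nabla v)^{-1}(y)\bigr)\d y$. I would next observe that each fibre $(\nabla v)^{-1}(y)$ is convex, since the affine function $z\mapsto v(x_0)+\langle y,z-x_0\rangle$ supports the graph of $v$ along it; hence if this fibre meets the open set $\{\det\Hess v\ne0\}$, on which $\nabla v$ is a local diffeomorphism and therefore locally injective, it consists of a single point. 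Applying the area formula separately on $B':=\{x\in B\colon\det\Hess v(x)\ne0\}$ and on $B\setminus B'$ then yields $|\nabla v(B')|=\int_{B'}\det(\Hess v(x))\d x$ and $|\nabla v(B\setminus B')|=0$, and adding gives the stated density.

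For part \emph{(b)}, I would first note that epi-convergence of finite-valued convex functions is locally uniform convergence (see \cite{RockafellarWets}), so the $v_j$ are uniformly Lipschitz on each compact set; hence their subgradients over a fixed compact set lie in a fixed ball, and $\MA(v_j;C)=|\partial v_j(C)|$ is bounded uniformly in $j$ for every compact $C\subset\R^n$. Thus $(\MA(v_j;\cdot))_j$ is precompact in the weak topology, and it remains to identify every subsequential limit with $\MA(v;\cdot)$. The key point is that subdifferential graphs are closed under this convergence: if $x_j\to x$, $y_j\to y$ and $y_j\in\partial v_j(x_j)$, then letting $j\to\infty$ in $v_j(z)\ge v_j(x_j)+\langle y_j,z-x_j\rangle$ gives $y\in\partial v(x)$. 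From this one obtains $\limsup_j\MA(v_j;C)\le\MA(v;C')$ whenever $C$ is compact and contained in the interior of a compact set $C'$. For the complementary lower bound on open sets I would pass to the convex conjugates, using that $\MA(w;\cdot)$ is the push-forward of Lebesgue measure on $\dom w^*$ under $\nabla w^*$ (which follows from the duality $y\in\partial w(x)\Leftrightarrow x\in\partial w^*(y)$ and the a.e.\ differentiability of $w^*$) together with the convergence of $v_j^*$ to $v^*$; the full argument is \cite[Theorem A.31]{Figalli}.

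For part \emph{(c)}, the plan is to localize. On the open set $\{v_1<v_2\}$ one has $v_1\wedge v_2=v_1$ and $v_1\vee v_2=v_2$, and symmetrically on $\{v_1>v_2\}$, so the four Monge--Amp\`ere measures restricted to $\R^n\setminus\{v_1=v_2\}$ satisfy the asserted identity trivially. The content of the statement therefore concerns the behaviour on the coincidence set $\{v_1=v_2\}$, where one must compare the subdifferentials $\partial v_1(x)$, $\partial v_2(x)$, $\partial(v_1\wedge v_2)(x)$ and $\partial(v_1\vee v_2)(x)$. This is precisely B\l ocki's computation \cite{Blocki}, which Alesker \cite{Alesker_cf} used to deduce the valuation property; I would simply invoke it, and refer also to \cite[Theorem 9.2]{Colesanti-Ludwig-Mussnig-3}.

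The step I expect to be the main obstacle is part \emph{(b)}: the Lipschitz estimate immediately gives precompactness and the one-sided bound $\limsup_j\MA(v_j;C)\le\MA(v;C')$, but pinning down the weak limit also requires the matching lower bound on open sets, and this genuinely uses the duality with the Legendre transform (equivalently, a stability property of the Monge--Amp\`ere operator) rather than the soft estimates alone.
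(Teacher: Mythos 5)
The paper gives no proof of this theorem: it simply cites Aleksandrov (or \cite[Proposition~2.6 and Theorem~A.31]{Figalli}) for parts (a) and (b) and Alesker--B\l ocki (or \cite[Theorem~9.2]{Colesanti-Ludwig-Mussnig-3}) for part (c), and your sketches lean on exactly those same references, so the approach matches. The only point worth tightening is in your argument for (a): the fibre $(\nabla v)^{-1}(y)\subset V$ need not itself be convex when $V$ is not convex; what is convex is the full preimage $\{x\in\R^n\colon y\in\partial v(x)\}=\partial v^*(y)$. This still suffices for your counting argument, since if $x_0\in B'$ and $x_1$ lies in the same fibre, the segment $[x_0,x_1]$ is contained in $\partial v^*(y)$, the function $v$ is $C^2$ near $x_0$ and affine along the segment near $x_0$, and therefore $(x_1-x_0)^{\top}\Hess v(x_0)(x_1-x_0)=0$, contradicting $\det\Hess v(x_0)\neq 0$.
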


\goodbreak
Let $\Radon$ denote the space of Radon measures on $\R^n$.  According to Theorem \ref{properties of MA measures} (b), the map $\MA\colon \fconvf\to \Radon$ is continuous, when $\fconvf$ is equipped with the topology induced by epi-convergence and $\Radon$ with the topology induced by weak convergence. 

\subsection{Mixed Monge--Amp\`ere Measures}
We use polarization of the Monge--Amp\`ere measure with respect to the standard addition of functions to obtain mixed Monge--Amp\`ere measures. They were called mixed $n$-Hessian measures in \cite{Trudinger:Wang2002} and were used, for example,  in \cite{Passare-Rullgard}. 

We say that a map $\oZ\colon\left(\fconvf\right)^n\to\mathcal{M}(\R^n)$ is \emph{symmetric}, if the measure $\oZ(v_1,\dots,v_n;\cdot)$ is invariant with respect to every permutation of $n$-tuples of functions in  $\fconvf$. For $0\le j \le n$ and $v,v_1, \dots, v_{n-j}\in\fconvf$, we write $\oZ(v[j], v_1, \dots, v_{n-j}; \cdot)$ when the entry $v$ is repeated $j$ times.

\goodbreak
\begin{theorem}\label{thm mixed MA measures} 
There exists a symmetric map $\,\MA\colon\left(\fconvf\right)^n\to\Radon\,$ which assigns to every {$n$-tuple} of functions $v_1,\dots,v_n\in\fconvf$ a Radon measure $\MA(v_1,\dots,v_n;\cdot)$ with the following properties.
\begin{enumerate}[label=\emph{(\alph*)}, ref={(\alph*)}, leftmargin=18pt]
\item\label{MMA a} For every $m\in\N$, every $m$-tuple of functions $v_1,\dots,v_m\in\fconvf$, and $\lambda_1,\dots,\lambda_m\ge0$,
\begin{equation*}\label{multilinearity}
\MA(\lambda_1 v_1+\dots+\lambda_m v_m;\cdot)
=\sum_{i_1,\dots,i_n=1}^m \lambda_{i_1}\cdots\lambda_{i_n} \MA(v_{i_1},\dots, v_{i_n};\cdot).
\end{equation*}
\item \label{MMA b}For every $v\in\fconvf$,
$$
\MA(v,\dots,v;\cdot)=\MA(v;\cdot).
$$
\item \label{MMA c} If $v_1,\dots,v_n\in \fconvf$ and $v_1,\dots,v_n \in C^2(V)$ on an open set $V\subset\R^n$, then $\MA(v_1,\dots,v_n;\cdot)$ is absolutely continuous on $V$ with respect to $n$-dimensional Lebesgue measure  and
\begin{equation*}
\d\MA(v_1,\dots,v_n;x)=\det(\Hess v_1(x),\dots,\Hess v_n(x))\d x
\end{equation*}
for $x\in V$.
\item \label{MMA d} The map $\MA\colon\left(\fconvf\right)^n\to\mathcal{M}(\R^n)$ is continuous, when $\left(\fconvf\right)^n$ is equipped with the product topology and every factor has the topology induced by epi-convergence, while $\mathcal{M}(\R^n)$ is equipped with the topology induced by weak convergence. 
\item \label{MMA e} The map $\MA\colon\left(\fconvf\right)^n\to\mathcal{M}(\R^n)$ is dually epi-translation invariant with respect to every entry, that is,  
$$
\MA(v+\ell +\gamma,v_1,\dots,v_{n-1};\cdot)=\MA(v,v_1,\dots,v_{n-1};\cdot)
$$ 
for every $v,v_1,\dots,v_{n-1}\in\fconvf$, every linear function $\ell\colon\R^n\to\R$ and $\gamma\in\R$.
\item \label{MMA f} The map $\MA\colon\left(\fconvf\right)^n\to\mathcal{M}(\R^n)$ is additive and positively homogeneous of degree 1 with respect to every entry, that is, 
$$
\MA(\lambda v +\mu w,v_1,\dots,v_{n-1};\cdot)=\lambda\,\MA(v,v_1,\dots,v_{n-1};\cdot)+\mu\,\MA(w,v_1,\dots,v_{n-1}; \cdot)
$$
for every $v,w,v_1,\dots,v_{n-1}\in\fconvf$ and  $\lambda,\mu \geq 0$. 
\item \label{MMA g}For $0\le j\le n$ and $v_1,\dots,v_{n-j}\in\fconvf$, the map
$$v \mapsto \MA(v[j],v_1, \dots, v_{n-j}; \cdot)$$
is a (measure-valued) valuation on $\fconvf$.
\end{enumerate}
\end{theorem}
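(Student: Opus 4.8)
The plan is to \emph{construct} the mixed Monge--Amp\`ere map by polarizing $\MA$ with respect to addition, via an inclusion--exclusion formula, and then to deduce each of \ref{MMA a}--\ref{MMA g} from the already available facts about ordinary Monge--Amp\`ere measures (Lemma~\ref{lemma 0} and Theorem~\ref{properties of MA measures}), using approximation by smooth convex functions as the main tool. Concretely, for $v_1,\dots,v_n\in\fconvf$ I would set
\begin{equation*}
\MA(v_1,\dots,v_n;\cdot):=\frac1{n!}\sum_{k=1}^n\ \sum_{1\le j_1<\dots<j_k\le n}(-1)^{n-k}\,\MA(v_{j_1}+\dots+v_{j_k};\cdot),
\end{equation*}
in analogy with \eqref{polarization_md}. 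A priori this is a signed Radon measure, a finite $\R$-linear combination of the Radon measures from Lemma~\ref{lemma 0}, and it is visibly invariant under permutations of $v_1,\dots,v_n$. The first checkpoint is the smooth case: if $w_1,\dots,w_n\in\fconvf\cap C^2(\R^n)$, then Theorem~\ref{properties of MA measures}\ref{MA a} rewrites each summand as $\det\big(\sum_i\Hess w_{j_i}(x)\big)\d x$, and applying the polarization identity \eqref{polarization_md} for mixed discriminants pointwise to $\Hess w_1(x),\dots,\Hess w_n(x)$ collapses the sum to $\det(\Hess w_1(x),\dots,\Hess w_n(x))\d x$; since mixed discriminants of positive semidefinite matrices are nonnegative, this already yields \ref{MMA c} for globally $C^2$ functions together with nonnegativity in that case.

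Next I would run the approximation argument. Mollifications $v_i\ast\varphi_\varepsilon$ lie in $\fconvf\cap C^\infty(\R^n)$ and converge to $v_i$ pointwise, hence epi-converge, and the same holds for every partial sum $\sum_{i\in S}(v_i\ast\varphi_\varepsilon)=(\sum_{i\in S}v_i)\ast\varphi_\varepsilon\to\sum_{i\in S}v_i$, using that epi-convergence of finite-valued convex functions coincides with pointwise (equivalently, locally uniform) convergence and is stable under addition. By Theorem~\ref{properties of MA measures}\ref{MA b}, $\MA(\sum_{i\in S}v_i\ast\varphi_\varepsilon;\cdot)\to\MA(\sum_{i\in S}v_i;\cdot)$ weakly for each $S$, so by the defining formula $\MA(v_1\ast\varphi_\varepsilon,\dots,v_n\ast\varphi_\varepsilon;\cdot)\to\MA(v_1,\dots,v_n;\cdot)$ weakly; running the same computation with an arbitrary epi-convergent sequence in place of the mollifications gives \ref{MMA d}. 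Nonnegativity of $\MA(v_1,\dots,v_n;\cdot)$ now follows, being a weak limit of the nonnegative smooth measures, so it is a genuine Radon measure, which establishes the first assertion. For \ref{MMA b} one notes that for smooth $w$ the density of $\MA(w,\dots,w;\cdot)$ is $\det(\Hess w)$, so $\MA(w,\dots,w;\cdot)=\MA(w;\cdot)$, and lets $w=v\ast\varphi_\varepsilon\to v$. For \ref{MMA c} one observes that on every open $V'$ with $\overline{V'}$ compact and contained in $V$ one has $v_i\ast\varphi_\varepsilon\to v_i$ in $C^2(V')$, so the densities $\det(\Hess(v_1\ast\varphi_\varepsilon),\dots,\Hess(v_n\ast\varphi_\varepsilon))$ converge uniformly on $V'$; testing the weakly convergent measures against $\beta\in C_c(V')$ identifies the limit density on $V'$ as $\det(\Hess v_1,\dots,\Hess v_n)$, and one exhausts $V$ by such $V'$.

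The remaining properties follow by the same two-step pattern. Statements \ref{MMA a} and \ref{MMA f} — the multilinear expansion, and additivity and positive $1$-homogeneity in each fixed entry — hold for smooth arguments by \eqref{polarisation md} applied to the Hessians together with Theorem~\ref{properties of MA measures}\ref{MA a}, and pass to the limit by \ref{MMA d}. For \ref{MMA e}, since $\Hess(\ell+\gamma)=0$ the smooth computation gives $\MA(\ell+\gamma,w_1,\dots,w_{n-1};\cdot)=0$; keeping the already smooth entry $\ell+\gamma$ fixed and using \ref{MMA d} yields $\MA(\ell+\gamma,v_1,\dots,v_{n-1};\cdot)=0$ for all $v_1,\dots,v_{n-1}$, and then the additivity from \ref{MMA f} gives $\MA(v+\ell+\gamma,v_1,\dots,v_{n-1};\cdot)=\MA(v,v_1,\dots,v_{n-1};\cdot)$. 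For \ref{MMA g} with $1\le j\le n$ (the case $j=0$ being trivial), I would expand $\MA(v[j],v_1,\dots,v_{n-j};\cdot)$ with the defining polarization formula, treating the $j$ copies of $v$ as distinct slots: every summand has the form $\MA(m\,v+p;\cdot)$ with $0\le m\le j$ and $p$ a fixed element of $\fconvf$ (a partial sum of the $v_i$, or $p\equiv0$). A summand with $m=0$ is constant in $v$, hence trivially a valuation; for $m\ge1$ the substitution $v\mapsto m\,v+p$ maps $\fconvf$ into itself, commutes with $\vee$ and $\wedge$, and sends a pair $v,w$ with $v\wedge w\in\fconvf$ to a pair whose minimum $m(v\wedge w)+p$ again lies in $\fconvf$, so precomposing the valuation $q\mapsto\MA(q;\cdot)$ from Theorem~\ref{properties of MA measures}\ref{MA c} with it is again a valuation; since a linear combination of valuations is a valuation, \ref{MMA g} follows.

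The main difficulty is not conceptual but lies in the approximation bookkeeping: one has to verify carefully that epi-convergence of finite-valued convex functions is equivalent to local uniform convergence and stable under addition, so that the partial sums of mollifications behave as used above, and — for \ref{MMA c} — that the restriction of a weak limit of Radon measures to an open set is governed by the uniform limit of the densities there, which is precisely where the local $C^2$ hypothesis forces one to work with $C^2_{\mathrm{loc}}$ convergence of the mollifications rather than with global smoothness.
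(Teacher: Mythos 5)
Your proposal is correct and takes essentially the same route as the paper: define the mixed measure by the inclusion--exclusion polarization of $\MA$, verify the mixed-discriminant density and nonnegativity on smooth functions, and transfer all properties to $\fconvf$ by weak continuity and denseness. The paper states the smooth case first and extends via \eqref{PR}, whereas you take \eqref{PR} as the definition outright; this is only a presentational difference, and your more detailed handling of properties \ref{MMA e} and \ref{MMA g} (via $\MA(\ell+\gamma,\dots;\cdot)=0$ together with additivity, and via precomposing $q\mapsto\MA(q;\cdot)$ with $v\mapsto mv+p$) spells out what the paper leaves implicit.
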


\begin{proof}
For $v_1,\dots,v_n\in \fconvf \cap C^2(\R^n)$ and $B\in\Borel(\R^n)$, we set
$$
\MA(v_1,\dots,v_n;B):=\int_B\det(\Hess v_1(x),\dots,\Hess v_n(x))\d x.
$$
Note that 
this measure is non-negative and symmetric. 
If all functions are in $\fconvf \cap C^2(\R^n)$, it verifies \ref{MMA a} by Theorem \ref{properties of MA measures} \ref{MA a} and by the fact that the mixed discriminant polarizes the determinant. Properties \ref{MMA b} and \ref{MMA f} for functions in $\fconvf \cap C^2(\R^n)$ also follow from corresponding properties of the mixed discriminant. Property~\ref{MMA e} for functions in $\fconvf \cap C^2(\R^n)$ follows directly from the fact that adding an affine function to $v$ does not change the Hessian matrix of $v$.

By \eqref{polarization_md}, we have 
\begin{equation}\label{PR}
\MA(v_1,\dots,v_n;\cdot)=\frac1{n!}\sum_{k=1}^n\ \sum_{1\le i_1<\dots<i_k\le n}(-1)^{n-k}\MA(v_{i_1}+\dots+v_{i_k};\cdot)
\end{equation}
for every $v_1,\dots,v_n\in \fconvf\cap C^2(\R^n)$. This identity, combined with Theorem \ref{properties of MA measures} \ref{MA b} and the denseness of $C^2(\R^n)$ functions in $\fconvf$, shows that the definition of $\MA$ extends continuously to $(\fconvf)^n$. Hence, we get properties \ref{MMA c} and \ref{MMA d}.
The extension inherits properties \ref{MMA a}, \ref{MMA b}, \ref{MMA e} and \ref{MMA f} by continuity. Property \ref{MMA g} follows from \eqref{PR} and  Theorem \ref{properties of MA measures} \ref{MA c}.
\end{proof}
\noindent
Concerning \ref{MMA c}, note that we use different notation for the Lebesgue integral over the mixed discriminant, which we only consider for functions that are of class $C^2$ almost everywhere, and the mixed Monge--Amp\`ere measure. This distinction is not always made in the literature.

As a consequence of Theorem~\ref{thm mixed MA measures} we obtain the following result, which for the special case $j=n$ was previously established in \cite[Proposition 19]{Colesanti-Ludwig-Mussnig-4}.

\begin{proposition}
\label{prop:mixed_hessian_vals}
Let $\beta\in C_c(\R^n)$ and $0\le j \le n$. If $v_{1},\dots,v_{n-j}\in\fconvf$, then the map $\oZ\colon\fconvf\to\R$ defined by
\begin{equation}\label{mhv}
\oZ(v):=\int_{\R^n}\beta(x)\d\MA(v[j],v_{1},\dots,v_{n-j};x),
\end{equation}
is a continuous, dually epi-translation invariant valuation that is homogeneous of degree $j$.  
\end{proposition}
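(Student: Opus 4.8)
The plan is to verify each of the three asserted properties of $\oZ$ in \eqref{mhv} separately, reducing everything to the corresponding properties of the mixed Monge--Amp\`ere measure collected in Theorem~\ref{thm mixed MA measures}. The homogeneity and valuation properties are essentially immediate; the continuity statement is the one requiring care, because $\beta$ has compact support while the measures $\MA(v[j],v_1,\dots,v_{n-j};\cdot)$ are only Radon (and the domain of integration is noncompact in the sense that the measures are not uniformly bounded along an epi-convergent sequence).

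First I would treat \textbf{homogeneity of degree $j$}. For $\lambda>0$ we apply multilinearity and positive $1$-homogeneity in each entry (Theorem~\ref{thm mixed MA measures}\,\ref{MMA a} and~\ref{MMA f}): since $v$ appears $j$ times, $\MA(\lambda v[j],v_1,\dots,v_{n-j};\cdot)=\lambda^j\MA(v[j],v_1,\dots,v_{n-j};\cdot)$, and integrating against $\beta$ gives $\oZ(\lambda v)=\lambda^j\oZ(v)$. Next, \textbf{dual epi-translation invariance}: if $\ell$ is linear and $\gamma\in\R$, then by Theorem~\ref{thm mixed MA measures}\,\ref{MMA e} applied to the (repeated) entry $v$ we have $\MA((v+\ell+\gamma)[j],v_1,\dots,v_{n-j};\cdot)=\MA(v[j],v_1,\dots,v_{n-j};\cdot)$, so $\oZ(v+\ell+\gamma)=\oZ(v)$. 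Then the \textbf{valuation property}: for $v,w\in\fconvf$ with $v\wedge w\in\fconvf$ (hence also $v\vee w\in\fconvf$), Theorem~\ref{thm mixed MA measures}\,\ref{MMA g} states that the measure-valued map $z\mapsto\MA(z[j],v_1,\dots,v_{n-j};\cdot)$ is a valuation, i.e.\ $\MA(v[j],\dots)+\MA(w[j],\dots)=\MA((v\wedge w)[j],\dots)+\MA((v\vee w)[j],\dots)$ as measures; integrating this identity against $\beta\in C_c(\R^n)$ yields $\oZ(v)+\oZ(w)=\oZ(v\wedge w)+\oZ(v\vee w)$.

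The main obstacle is \textbf{continuity}. Let $v_k\to v$ epi-converge in $\fconvf$. Fixing the other entries $v_1,\dots,v_{n-j}$, consider the $n$-tuples $(v_k[j],v_1,\dots,v_{n-j})$ and $(v[j],v_1,\dots,v_{n-j})$; since the first $j$ coordinates epi-converge and the remaining $n-j$ are constant (hence trivially epi-convergent), the tuple converges in the product topology on $(\fconvf)^n$. By the joint continuity in Theorem~\ref{thm mixed MA measures}\,\ref{MMA d}, the measures $\MA(v_k[j],v_1,\dots,v_{n-j};\cdot)$ converge \emph{weakly} to $\MA(v[j],v_1,\dots,v_{n-j};\cdot)$, which by definition means $\int_{\R^n}\gamma\,\d\MA(v_k[j],\dots)\to\int_{\R^n}\gamma\,\d\MA(v[j],\dots)$ for every $\gamma\in C_c(\R^n)$. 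Since $\beta\in C_c(\R^n)$, this immediately gives $\oZ(v_k)\to\oZ(v)$. Thus continuity is in fact a direct consequence of the weak continuity already packaged into Theorem~\ref{thm mixed MA measures}\,\ref{MMA d}, and no separate tightness argument is needed; the only subtlety to state carefully is that holding $n-j$ entries fixed does not disturb product-topology convergence, which is trivial. This completes the proof.

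\begin{proof}
Fix $v_1,\dots,v_{n-j}\in\fconvf$ and $\beta\in C_c(\R^n)$.

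\emph{Homogeneity.} Let $\lambda>0$ and $v\in\fconvf$. By Theorem~\ref{thm mixed MA measures}\,\ref{MMA a} and~\ref{MMA f}, since $v$ occupies $j$ of the $n$ slots,
$$\MA(\lambda v[j],v_1,\dots,v_{n-j};\cdot)=\lambda^j\,\MA(v[j],v_1,\dots,v_{n-j};\cdot).$$
Integrating against $\beta$ gives $\oZ(\lambda v)=\lambda^j\oZ(v)$.

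\emph{Dual epi-translation invariance.} Let $\ell\colon\R^n\to\R$ be linear and $\gamma\in\R$. Applying Theorem~\ref{thm mixed MA measures}\,\ref{MMA e} to the repeated entry $v$,
$$\MA((v+\ell+\gamma)[j],v_1,\dots,v_{n-j};\cdot)=\MA(v[j],v_1,\dots,v_{n-j};\cdot),$$
so $\oZ(v+\ell+\gamma)=\oZ(v)$.

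\emph{Valuation.} Let $v,w\in\fconvf$ with $v\wedge w\in\fconvf$; then also $v\vee w\in\fconvf$. By Theorem~\ref{thm mixed MA measures}\,\ref{MMA g}, the measure-valued map $z\mapsto\MA(z[j],v_1,\dots,v_{n-j};\cdot)$ is a valuation on $\fconvf$, hence
$$\MA(v[j],v_1,\dots;\cdot)+\MA(w[j],v_1,\dots;\cdot)=\MA((v\wedge w)[j],v_1,\dots;\cdot)+\MA((v\vee w)[j],v_1,\dots;\cdot)$$
as Radon measures. Integrating against $\beta$ yields $\oZ(v)+\oZ(w)=\oZ(v\wedge w)+\oZ(v\vee w)$.

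\emph{Continuity.} Let $v_k$ be a sequence in $\fconvf$ epi-converging to $v\in\fconvf$. In the product topology on $(\fconvf)^n$, the tuples $(v_k[j],v_1,\dots,v_{n-j})$ converge to $(v[j],v_1,\dots,v_{n-j})$, since the first $j$ coordinates epi-converge and the last $n-j$ coordinates are constant. By Theorem~\ref{thm mixed MA measures}\,\ref{MMA d}, the measures $\MA(v_k[j],v_1,\dots,v_{n-j};\cdot)$ converge weakly to $\MA(v[j],v_1,\dots,v_{n-j};\cdot)$. As $\beta\in C_c(\R^n)$, the definition of weak convergence gives
$$\oZ(v_k)=\int_{\R^n}\beta(x)\d\MA(v_k[j],v_1,\dots,v_{n-j};x)\longrightarrow\int_{\R^n}\beta(x)\d\MA(v[j],v_1,\dots,v_{n-j};x)=\oZ(v).$$
This proves continuity and completes the proof.
\end{proof}
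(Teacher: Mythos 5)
Your proof is correct and takes essentially the same approach as the paper: the paper's proof is a one-paragraph summary citing items \ref{MMA d}, \ref{MMA e}, \ref{MMA f}, and \ref{MMA g} of Theorem~\ref{thm mixed MA measures} for continuity, invariance, homogeneity, and the valuation property, respectively, together with the observation that the integral is well-defined because $\beta\in C_c(\R^n)$ and the measures are Radon. You simply spell out the same reductions in more detail (also invoking \ref{MMA a} alongside \ref{MMA f} for the homogeneity, which is a harmless variant).
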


\begin{proof} Note that the integral in \eqref{mhv} is well-defined and finite as $\beta\in C_c(\R^n)$ and mixed Monge--Amp\`ere measures are Radon measures. Continuity follows from the weak continuity of mixed Monge--Amp\`ere measures. The invariance, homogeneity and valuation properties are consequences of items \ref{MMA e}, \ref{MMA f} and \ref{MMA g} of Theorem \ref{thm mixed MA measures}, respectively.  
\end{proof}

\medskip
\noindent
We remark that valuations defined in a way similar to \eqref{mhv} have been considered by Alesker \cite{Alesker_cf} and by Knoerr  \cite{Knoerr2}.

\subsection{Hessian Measures as a Special Case} 
For a function $v\in \fconvf\cap C^2(\R^n)$ and $0\le j\le n$, the $j$th Hessian measure $\hd{j}(v;B)$ is defined for $B\in\Borel(\R^n)$ as 
$$\hd{j}(v;B)=\int_B  [\Hess v(x)]_j \d x.$$ 
Trudinger and Wang \cite{Trudinger:Wang1997, Trudinger:Wang1999} showed that $\hd{j}(v;\cdot)$ can be extended to a Radon measure on $\fconvf$. It coincides up to the factor $\binom{n}{j}$ with
$$
\MA(v[j],\tfrac12{h_{\Bn}^2}[n-j];\cdot).
$$
Indeed, if $v\in \fconvf\cap C^2(\R^n)$, then by Theorem \ref{thm mixed MA measures} \ref{MMA c} and \eqref{eq:mixed_dis_hessian},
$$
\binom{n}{j}\d\MA(v[j],\tfrac12{h_{\Bn}^2}[n-j];x)= \binom{n}{j}\det(\Hess v(x)[j],I_n[n-j])\d x= [\Hess v(x)]_j \d x = \!\d\Phi_j(v,x).
$$
We obtain the conclusion using the denseness of smooth functions.

\subsection{A Special Family of Mixed Monge--Amp\`ere Measures}

We introduce the following family of mixed Monge--Amp\`ere measures.  For $v\in\fconvf$ and $0\le j \le n$, we set
\begin{equation}\label{def nhd}
\nhd{j}(v;\cdot):=\MA(v[j],h_{\Bn}[n-j];\cdot).
\end{equation}
By construction, this is a Radon measure on $\R^n$. 

It follows from Theorem \ref{thm mixed MA measures} \ref{MMA a} that the mixed Monge--Amp\`ere measures \eqref{def nhd} can also be obtained as coefficients of the following Steiner formula,
\begin{equation}
\label{eq:steiner_formula_maj}
\MA(v+r h_{\Bn};B)=\sum_{j=0}^n \binom nj r^{n-j}\nhd{j}(v;B)
\end{equation}
for $B\in\Borel(\R^n)$ and $r\ge 0$.

We derive some of the properties  of the measures $\nhd{j}(v;\cdot)$ for $0\le j\le n$.
The subdifferential of $h_{\Bn}$ can be explicitly described as 
\begin{equation}\label{gradhB}
\partial h_{\Bn}(x)=
\begin{cases}
\Big\{\dfrac{x}{|x|}\Big\}&\mbox{for $x\ne 0$,}\\[12pt]
\,\Bn&\mbox{for $x=0$.}
\end{cases}
\end{equation}
Combining this with the definition of Monge--Amp\`ere measure, we see that
\begin{equation}\label{MA of the norm}
\MA(h_{\Bn};\cdot)=\kappa_n \delta_0,
\end{equation}
where  $\delta_0$ is the Dirac measure at $0$.
Indeed, if $B\in\Borel(\R^n)$ does not contain the origin, then we have $\partial h_{\Bn}(B)\subset\S^{n-1}$, so that
$$
\MA (h_{\Bn};B)=\vert{\partial h_{\Bn}(B)}\vert=0.
$$
On the other hand, if $0\in B$, then $\partial h_{\Bn}(B)=\Bn$. Note that
\begin{equation}\label{Jn}
\Hess h_{\Bn}(x)=\frac1{\vert x\vert} \Big(I_n-\frac{x}{\vert x\vert}\otimes \frac{x}{\vert x\vert}\Big)
\end{equation}
for $x\ne0$, where $y\otimes z$ denotes the tensor product of $y,z\in\R^n$.

\goodbreak
\begin{theorem}\label{properties of nhm} Let $v\in\fconvf$. The following properties hold. 
\begin{enumerate}[label=\emph{(\alph*)}, ref={(\alph*)}, leftmargin=18pt]
\item\label{nhm a}For $B\in\Borel(\R^n)$,
$$
\nhd{0}(v;B)=\kappa_n\delta_0(B).
$$
In particular, $\,\nhd{0}(v;\cdot)$ is independent of $v$.
\item\label{nhm b} For $B\in\Borel(\R^n)$,
$$
\nhd{n}(v;B)=\vert{\partial v(B)}\vert=\MA(v;B).
$$
\item \label{nhm 0} For $\,0\le j\le n$, $$\nhd{j}(v;\{0\})=\frac{\kappa_{n-j}}{\binom{n}{j}}\, V_{j}(\partial v(0)).$$
\item\label{nhm c} If $v\in C^2(V)$ with $V\subset\R^n$ open and $1\le j\le n$, then $\nhd{j}(v;\cdot)$ is absolutely continuous on $V\backslash\{0\}$ with respect to $n$-dimensional Lebesgue measure and 
$$
\d\nhd{j}(v;x)=\det(\Hess v(x)[j],\Hess h_{\Bn}(x) [n-j])\d x
$$
for $x\in V$ with $x\ne 0$.
\item\label{nhm d} For $\,0\le j\le n$, the map $\nhd{j}: \fconvf\to \Radon$ is a continuous valuation.
\end{enumerate}
\end{theorem}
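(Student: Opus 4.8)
The plan is to establish the five items one by one, each as a short consequence of the structural properties of mixed Monge--Amp\`ere measures collected in Theorem~\ref{thm mixed MA measures}, together with the explicit descriptions \eqref{gradhB}--\eqref{Jn} of the subdifferential and Hessian of $h_{\Bn}$, the identity \eqref{MA of the norm}, and---for the one substantive point---the classical Steiner formula \eqref{steiner}. For \ref{nhm a}, I would specialize the diagonal relation Theorem~\ref{thm mixed MA measures}~\ref{MMA b} with all $n$ arguments equal to $h_{\Bn}$, which gives $\nhd{0}(v;\cdot)=\MA(h_{\Bn};\cdot)$, and then \eqref{MA of the norm} identifies this with $\kappa_n\delta_0$, visibly independent of $v$. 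For \ref{nhm b}, the same diagonal relation with all $n$ arguments equal to $v$ gives $\nhd{n}(v;\cdot)=\MA(v;\cdot)$, which by Lemma~\ref{lemma 0} equals $|\partial v(\cdot)|$.

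The heart of the proof is \ref{nhm 0}, and this is the step I expect to require the most care. I would start from the Monge--Amp\`ere Steiner formula \eqref{eq:steiner_formula_maj}, namely $\MA(v+r h_{\Bn};B)=\sum_{j=0}^n\binom nj r^{n-j}\nhd{j}(v;B)$, and evaluate it at $B=\{0\}$ for all $r\ge 0$. By Lemma~\ref{lemma 0}, the left-hand side equals $|\partial(v+r h_{\Bn})(0)|$. Since $h_{\Bn}$ is finite-valued and continuous on all of $\R^n$, the Moreau--Rockafellar sum rule for subdifferentials applies at $0$, giving $\partial(v+r h_{\Bn})(0)=\partial v(0)+r\,\partial h_{\Bn}(0)=\partial v(0)+r\Bn$ by \eqref{gradhB} and positive homogeneity of the subdifferential under multiplication by $r>0$. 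As $v$ is finite-valued and convex, $\partial v(0)$ is a convex body, so the classical Steiner formula \eqref{steiner} yields $|\partial v(0)+r\Bn|=\sum_{j=0}^n r^{n-j}\kappa_{n-j}V_j(\partial v(0))$. Both sides of the resulting equality are polynomials in $r$ of degree at most $n$ that agree for every $r\ge 0$, so comparing the coefficient of $r^{n-j}$ gives $\binom nj\nhd{j}(v;\{0\})=\kappa_{n-j}V_j(\partial v(0))$, which is the assertion. The only genuinely delicate points are the additivity of the subdifferential at $0$ and the computation $\partial(r h_{\Bn})(0)=r\Bn$, but both are standard facts for convex functions with full effective domain.

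The remaining items are again direct. For \ref{nhm c}, note that $h_{\Bn}(x)=|x|$ is of class $C^2$ (indeed $C^\infty$) on $\R^n\setminus\{0\}$, with Hessian given by \eqref{Jn}; hence on the open set $V\setminus\{0\}$ both $v$ and $h_{\Bn}$ are $C^2$, and Theorem~\ref{thm mixed MA measures}~\ref{MMA c}---applied with $j$ copies of $v$ and $n-j$ copies of $h_{\Bn}$---gives the asserted absolute continuity and the density $\det(\Hess v(x)[j],\Hess h_{\Bn}(x)[n-j])$ on $V\setminus\{0\}$. For \ref{nhm d}, continuity of $\nhd{j}$ follows from the joint weak continuity in Theorem~\ref{thm mixed MA measures}~\ref{MMA d}: if $v_k$ epi-converges to $v$, then the $n$-tuples $(v_k[j],h_{\Bn}[n-j])$ converge to $(v[j],h_{\Bn}[n-j])$ in the product topology, so $\nhd{j}(v_k;\cdot)$ converges weakly to $\nhd{j}(v;\cdot)$; and the valuation property is precisely Theorem~\ref{thm mixed MA measures}~\ref{MMA g} with the fixed entries $v_1=\dots=v_{n-j}=h_{\Bn}$.
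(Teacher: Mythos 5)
Your proposal is correct and follows essentially the same route as the paper: items (a), (b), (d) directly from Theorem~\ref{thm mixed MA measures} and \eqref{MA of the norm}, item (c) from Theorem~\ref{thm mixed MA measures}~\ref{MMA c} together with \eqref{Jn}, and item (c.5) by applying the subdifferential sum rule at the origin, invoking the classical Steiner formula \eqref{steiner}, and comparing coefficients with \eqref{eq:steiner_formula_maj}. The paper likewise cites \cite[Theorem 23.8]{Rockafellar} for the additivity of the subdifferential, so there is no substantive difference.
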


\begin{proof} Item \ref{nhm a} follows from  \eqref{MA of the norm}. Item \ref{nhm b} follows from \eqref{def nhd} with $j=n$ and the definition of $\MA(v;\cdot)$. Item \ref{nhm d} is a consequence of Theorem \ref{thm mixed MA measures} \ref{MMA d} and \ref{MMA g} while item \ref{nhm c} follows from Theorem \ref{thm mixed MA measures} \ref{MMA c} combined with \eqref{Jn} and \eqref{polarisation md}.

It remains to show \ref{nhm 0}. By \eqref{gradhB} we obtain that
for $v\in\fconvf$ and  $r\ge0$,
\begin{equation*}\label{parallel zero}
\partial(v+r\,h_{\Bn})(0)=\partial v(0)+r \Bn,
\end{equation*}
where we use that the subdifferential of the sum of two convex functions is the Minkowski sum of their subdifferentials (see, for example, \cite[Theorem 23.8]{Rockafellar}). Hence, according to the Steiner formula \eqref{steiner},
\begin{equation*}\label{deltas}
\vert{\partial(v+r\,h_{\Bn})(0)}\vert=\sum_{j=0}^n r^{n-j}\kappa_{n-j} V_j(\partial v(0)),
\end{equation*}
which  combined with the definition of the Monge--Amp\`ere measure and \eqref{eq:steiner_formula_maj} concludes the proof. 
\end{proof}

\section{Conjugate Monge--Amp\`ere and Conjugate Mixed Monge--Amp\`ere Measures}
\label{se:conjugate_mma_measures}
First, we define the conjugate Monge--Amp\`ere measure for super-coercive convex functions, using the construction of Monge--Amp\`ere measures on $\fconvf$ and a duality argument.
Let $u\in\fconvs$. For $B\in\Borel(\R^n)$, we set
\begin{equation}\label{MA measure primal}
\MAp(u;B):=\MA(u^*;B).
\end{equation}
Note that Lemma \ref{lemma 0} implies that  $\MAp\colon\fconvs\to \Radon$ is well-defined and that $\MAp(u;\cdot)$ is a Radon measure for every $u\in\fconvs$, as $u^*\in\fconvf$ in this case. We refer to $\MAp(u; \cdot)$ as the \emph{conjugate Monge--Amp\`ere measure} of $u$.
It is the push-forward through  $\nabla u$  of  $n$-dimensional Lebesgue measure restricted to the domain of $u$  and we have included a proof of this known fact as item \ref{cMA a} of the following result.
In the following, for $u\in \fconvs$, we use the relation
\begin{equation}
\label{eq:inv_hess}
\Hess u^*(\nabla u(x))=\left(\Hess u(x)\right)^{-1}
\end{equation}
for $x\in\R^n$ such that $u\in C_+^2(U)$ in a neighborhood $U$ of $x$ (see \cite[p. 605]{RockafellarWets}).

\goodbreak
\begin{theorem}\label{properties of MA measure in the primal setting} 
The following properties hold.
\begin{enumerate}[label=\emph{(\alph*)}, ref={(\alph*)}, leftmargin=18pt]
\item\label{cMA a} If $u\in\fconvs$, then
\begin{equation*}
\int_{\R^n}\beta(y)\d\MAp(u;y)=\int_{\dom u}\beta(\nabla u(x)) \d x
\end{equation*}
for every $\beta\in C_c(\R^n)$.
\item\label{cMA b} If $u_j$ is a sequence in $\fconvs$ that is epi-convergent to $u\in\fconvs$, then the sequence of measures $\MAp(u_j;\cdot)$ converges weakly to $\MAp(u;\cdot)$.
\item\label{cMA c} For every $u_1,u_2\in\fconvs$ such that $u_1\vee u_2$ and $u_1\wedge u_2$ are in $\fconvs$,
\begin{equation*}
\MAp(u_1;\cdot)+\MAp(u_2;\cdot)=\MAp(u_1\wedge u_2;\cdot)+\MAp(u_1\vee u_2;\cdot),
\end{equation*}
that is, $\MAp$ is a (measure-valued) valuation on $\fconvs$.
\end{enumerate}
\end{theorem}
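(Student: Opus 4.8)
The plan is to establish part \ref{cMA a} directly from the definition $\MAp(u;\cdot)=\MA(u^*;\cdot)$ in \eqref{MA measure primal} together with Lemma \ref{lemma 0}, and then to obtain parts \ref{cMA b} and \ref{cMA c} at once from the corresponding statements for $\MA$ on $\fconvf$ (Theorem \ref{properties of MA measures} \ref{MA b} and \ref{MA c}) via the conjugacy dictionary of Proposition \ref{basics}.

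For \ref{cMA a}, write $\MAp(u;B)=\MA(u^*;B)=|\partial u^*(B)|$ for $B\in\Borel(\R^n)$, using \eqref{MA measure primal} and Lemma \ref{lemma 0}. The key step is to re-express $\partial u^*(B)$ through the standard conjugacy relation $x\in\partial u(y)\iff y\in\partial u^*(x)$ for proper l.s.c.\ convex functions (see \cite[Theorem 23.5]{Rockafellar}), which gives
$$\partial u^*(B)=\{y\in\R^n\colon\partial u(y)\cap B\neq\emptyset\}.$$
Since $\dom u$ is convex, $\bd\dom u$ is Lebesgue null, and $u$ is differentiable almost everywhere on $\interno\dom u$; hence there is a null set $Z\subset\R^n$ such that for every $y\notin Z$ one has $\partial u(y)=\emptyset$ if $y\notin\dom u$ and $\partial u(y)=\{\nabla u(y)\}$ otherwise. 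For such $y$, the condition $\partial u(y)\cap B\neq\emptyset$ holds precisely when $y\in\dom u$ and $\nabla u(y)\in B$, so $\partial u^*(B)$ and $\{y\in\dom u\setminus Z\colon\nabla u(y)\in B\}$ differ by a subset of $Z$. Consequently
$$\MAp(u;B)=|\{y\in\dom u\colon\nabla u(y)\in B\}|,$$
i.e.\ $\MAp(u;\cdot)$ is the push-forward of the restriction of Lebesgue measure to $\dom u$ under $x\mapsto\nabla u(x)$ (defined almost everywhere on $\dom u$). The asserted identity for $\beta\in C_c(\R^n)$ is then the image-measure formula: it holds for $\beta=\ind_B$ by the last display, extends to non-negative simple functions by linearity and to general $\beta$ by decomposing into positive and negative parts and monotone approximation, all integrals being finite because $\MAp(u;\cdot)$ is a Radon measure and $\beta$ has compact support. (For $u\in\fconvs\cap C_+^2(\R^n)$ one could instead invoke Theorem \ref{properties of MA measures} \ref{MA a} for $u^*$, the relation \eqref{eq:inv_hess}, and the change of variables $y=\nabla u(x)$, and then pass to general $u$ by approximation; the subdifferential argument is preferable since it needs no separate continuity input.)

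Parts \ref{cMA b} and \ref{cMA c} are then immediate. If $u_j\to u$ epi-converges in $\fconvs$, then $u_j^*\to u^*$ epi-converges in $\fconvf$ by Proposition \ref{basics} (e), so $\MA(u_j^*;\cdot)\to\MA(u^*;\cdot)$ weakly by Theorem \ref{properties of MA measures} \ref{MA b}; by \eqref{MA measure primal} this is \ref{cMA b}. If $u_1,u_2\in\fconvs$ with $u_1\vee u_2,u_1\wedge u_2\in\fconvs$, then by Proposition \ref{basics} (c) the functions $u_1^*\vee u_2^*$ and $u_1^*\wedge u_2^*$ lie in $\fconvf$ and equal $(u_1\wedge u_2)^*$ and $(u_1\vee u_2)^*$ respectively; in particular the hypothesis of Theorem \ref{properties of MA measures} \ref{MA c} is satisfied for $v_1=u_1^*$, $v_2=u_2^*$, and applying it and rewriting via \eqref{MA measure primal} gives \ref{cMA c}.

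The only genuine work lies in \ref{cMA a}, and there the point to handle with care is the measure-theoretic bookkeeping: that the exceptional set $Z$ is truly Lebesgue null (so that $u$ may be differentiated almost everywhere on $\dom u$) and that $\partial u^*(B)$ agrees with an explicit $\nabla u$-preimage up to a null set; once the push-forward description is in place, the passage from indicator functions to arbitrary $\beta\in C_c(\R^n)$ is routine.
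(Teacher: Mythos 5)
Your proof of parts (b) and (c) matches the paper's (duality via Proposition~\ref{basics} and Theorem~\ref{properties of MA measures}). Your proof of part (a), however, takes a genuinely different route. The paper first establishes the identity for $u\in\fconvs\cap C^2_+(\R^n)$ by the change of variables $y=\nabla u(x)$ together with \eqref{eq:inv_hess} and Theorem~\ref{properties of MA measures}~\ref{MA a}, and then passes to general $u\in\fconvs$ by an approximation argument, invoking the weak continuity of $\MAp$ from part (b) and the (previously established) continuity on $\fconvs$ of $u\mapsto\int_{\dom u}\beta(\nabla u(x))\d x$ from \eqref{eq:n-hom}. You instead derive the push-forward description of $\MAp(u;\cdot)$ directly from the subdifferential duality $x\in\partial u(y)\iff y\in\partial u^*(x)$, observing that $\partial u^*(B)=\{y:\partial u(y)\cap B\neq\emptyset\}$ coincides, up to a Lebesgue-null set (boundary of the convex $\dom u$ plus the non-differentiability points inside), with the $\nabla u$-preimage of $B$ in $\dom u$, and then you apply the standard image-measure change-of-variables. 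Both arguments are correct; yours is more self-contained in that it needs neither the $C^2_+$ density nor the external continuity fact from \eqref{eq:n-hom} cited in the paper, whereas the paper's approach avoids the a.e.\ differentiability bookkeeping and reuses machinery already in place. Your parenthetical already notes this trade-off, and it is accurate.
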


\begin{proof} Properties \ref{cMA b} and \ref{cMA c} are consequences of properties \ref{MA b} and \ref{MA c} in Theorem \ref{properties of MA measures}, respectively, and of Proposition \ref{basics}. 

Concerning property \ref{cMA a}, observe that if $u\in \fconvs\cap C^2_+(\R^n)$, then $u^*\in \fconvf\cap C^2_+(\R^n)$ (by Proposition~\ref{basics}). By Theorem \ref{properties of MA measures} \ref{MA a},  setting $v\!:=u^*$, we obtain
\begin{eqnarray*}
\int_{\R^n}\beta(y)\d\MAp(u;y)&=&\int_{\R^n}\beta(y)\d\MA(v;y)\\
&=&\int_{\R^n}\beta(y)\det(\Hess v(y))\d y\\
&=&\int_{\R^n}\beta(\nabla u(x))\d x
\end{eqnarray*}
for $\beta\in C_c(\R^n)$.
Here we used the change of variable $y=\nabla u(x)$ and \eqref{eq:inv_hess}. The statement now follows from property \ref{cMA b} combined with the fact that 
the functional $u\mapsto \int_{\R^n}\beta(\nabla u(x))\d x$
is continuous on $\fconvs$ (see \eqref{eq:n-hom}).
\end{proof}

\subsection{Conjugate Mixed Monge--Amp\`ere Measures}
We use polarization of the conjugate Monge--Amp\`ere measure with respect to infimal convolution to define conjugate mixed Monge--Amp\`ere measures.
The following result is easily obtained from Theorem \ref{thm mixed MA measures}.

\begin{theorem}\label{thm mixed MA measures primal} There exists a symmetric map $\,\MAp\colon\left(\fconvs\right)^n\to\Radon\,$ which assigns to every $n$-tuple of functions $u_1,\dots,u_n\in\fconvs$ a Radon measure $\MAp(u_1,\dots,u_n;\cdot)$ with the following properties.
\begin{enumerate}[label=\emph{(\alph*)}, ref={(\alph*)}, leftmargin=18pt]
\item\label{cMM a} For every $m\in\N$, every $m$-tuple of functions $u_1,\dots,u_m\in\fconvs$ and $\lambda_1,\dots,\lambda_m\ge0$,
\begin{equation*}
\MAp(\lambda_1\sq u_1\infconv \dots\infconv \lambda_m\sq u_m;\cdot)=\sum_{i_1,\dots,i_n=1}^m \lambda_{i_1}\cdots\lambda_{i_n} \,\MAp(u_{i_1},\dots, u_{i_n};\cdot).
\end{equation*}
\item\label{cMM b} For every $u\in\fconvs$,
$$
\MAp(u,\dots,u;\cdot)=\MAp(u;\cdot).
$$
\item\label{cMM c} If $\,u_1,\dots,u_n\in \fconvs$ and $u_1^*,\dots,u_n^*\in C^2(V)$ on an open set $V\subset \R^n$, then the measure $\MAp(u_1,\dots,u_n;\cdot)$ is absolutely continuous on $V$ with respect to $n$-dimensional Lebesgue measure  and 
\begin{equation*} \d\MAp(u_1,\dots,u_n;x)=\det(\Hess u_1^*(x),\dots,\Hess u_n^*(x))\d x\end{equation*}
for $x\in V$.
\item\label{cMM d} The map $\MAp\colon\left(\fconvs\right)^n\to\Radon$ is continuous, when $\left(\fconvs\right)^n$ is equipped with the product topology and every factor has the topology induced by epi-convergence, while $\mathcal{M}(\R^n)$ is equipped with the topology induced by weak convergence. 
\item\label{cMM e} The map $\MAp\colon\left(\fconvs\right)^n\to\Radon$ is epi-translation invariant with respect to every entry, that is,  
$$
\MAp(u\circ \tau^{-1} +\gamma,u_1,\dots,u_{n-1};\cdot)=\MAp(u,u_1,\dots,u_{n-1};\cdot)$$
for every $u,u_1,\dots,u_{n-1}\in\fconvs$, every translation $\tau\colon \R^n\to\R^n$ and $\gamma\in\R$. 
\item\label{cMM f} The map $\MAp\colon\left(\fconvs\right)^n\to\Radon$ is epi-additive and epi-homogeneous of degree 1 with respect to every entry, that is,
$$
\MAp((\lambda\sq u)\infconv  (\mu\sq w),u_1,\dots,u_{n-1};\cdot)=\lambda\,\MAp(u,u_1,\dots,u_{n-1};\cdot)+\mu\,\MAp(w,u_1,\dots,u_{n-1}; \cdot),
$$
for every $u,w,u_1,\dots,u_{n-1}\in\fconvs$ and for every $\lambda,\mu \geq 0$. 
\item\label{cMM g} For $\,0\le j\le n$ and $u_1, \dots, u_{n-j}\in\fconvs$, the map 
$$u\mapsto \MAp(u[j],u_1, \dots,u_{n-j};\cdot)$$ 
is a (measure-valued) valuation on $\fconvs$.
\end{enumerate}
\end{theorem}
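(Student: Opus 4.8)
The plan is to define the candidate map by conjugation and to transfer each of the properties from Theorem~\ref{thm mixed MA measures} through the dictionary provided by Proposition~\ref{basics}. For $u_1,\dots,u_n\in\fconvs$ I would set
$$\MAp(u_1,\dots,u_n;\cdot):=\MA(u_1^*,\dots,u_n^*;\cdot),$$
which is well-defined and yields a Radon measure, since $u_i^*\in\fconvf$ for each $i$ by Proposition~\ref{basics}~(a) while $\MA$ on $(\fconvf)^n$ is the map furnished by Theorem~\ref{thm mixed MA measures}. Symmetry of $\MAp$ is then immediate from that of $\MA$, and the choice $u_1=\dots=u_n=u$ together with Theorem~\ref{thm mixed MA measures}~\ref{MMA b} gives \ref{cMM b}; in particular this definition is consistent with \eqref{MA measure primal}.

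To obtain the multilinearity \ref{cMM a}, I would first record the conjugation identity
$$\big(\lambda_1\sq u_1\infconv\dots\infconv\lambda_m\sq u_m\big)^*=\lambda_1 u_1^*+\dots+\lambda_m u_m^*$$
for $\lambda_1,\dots,\lambda_m>0$ and $u_1,\dots,u_m\in\fconvs$, which follows by iterating Proposition~\ref{basics}~(d) (using that infimal convolution and epi-multiplication map $\fconvs$ into itself and that $1\sq w=w$). Inserting this into Theorem~\ref{thm mixed MA measures}~\ref{MMA a}, applied to $u_1^*,\dots,u_m^*$, yields \ref{cMM a} when all $\lambda_i>0$; the degenerate case where some $\lambda_i$ vanish reduces to the same identity for the remaining functions. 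Property \ref{cMM f} is the degree-one, two-function instance of the very same computation combined with Theorem~\ref{thm mixed MA measures}~\ref{MMA f}, and \ref{cMM c} is immediate from Theorem~\ref{thm mixed MA measures}~\ref{MMA c}, since the hypothesis there is already phrased in terms of the conjugates $u_i^*$.

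It then remains to transfer the invariance, continuity and valuation properties. For \ref{cMM e}, a short computation shows that if $\tau(x)=x+x_0$ is a translation and $\gamma\in\R$, then $(u\circ\tau^{-1}+\gamma)^*=u^*+\langle x_0,\cdot\rangle-\gamma$ differs from $u^*$ only by an affine function, so the claim follows from Theorem~\ref{thm mixed MA measures}~\ref{MMA e}. Continuity \ref{cMM d} follows from Theorem~\ref{thm mixed MA measures}~\ref{MMA d} together with Proposition~\ref{basics}~(e), since $u_k$ epi-converges to $u$ in $\fconvs$ precisely when $u_k^*$ epi-converges to $u^*$ in $\fconvf$. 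Finally, for \ref{cMM g} I would fix $u_1,\dots,u_{n-j}\in\fconvs$ and, given $u_1,u_2\in\fconvs$ with $u_1\vee u_2,u_1\wedge u_2\in\fconvs$, use Proposition~\ref{basics}~(c) to write $(u_1\vee u_2)^*=u_1^*\wedge u_2^*$ and $(u_1\wedge u_2)^*=u_1^*\vee u_2^*$ in $\fconvf$; applying the valuation property of $\MA$ in its repeated entry, Theorem~\ref{thm mixed MA measures}~\ref{MMA g}, to $u_1^*,u_2^*$ with fixed remaining entries $u_1^*,\dots,u_{n-j}^*$ then gives \ref{cMM g}. The only mildly delicate point throughout is the bookkeeping around the conjugation identities for infimal convolution and epi-multiplication --- in particular the $m$-fold iterate of Proposition~\ref{basics}~(d) and the treatment of vanishing coefficients --- but I do not expect any genuine obstacle, since the whole argument is a routine dualization of the proof of Theorem~\ref{thm mixed MA measures}.
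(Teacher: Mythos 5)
Your proposal is correct and takes essentially the same approach the paper intends: the paper itself offers no explicit proof of this theorem, stating only that it ``is easily obtained from Theorem~\ref{thm mixed MA measures},'' which is exactly the routine conjugation argument you carry out. The only detail worth noting is that the degenerate case $\lambda_i=0$ uses the paper's convention $0\sq u=\ind_{\{0\}}$ (whose conjugate is the zero function), which you implicitly handle correctly.
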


\noindent
Here, for \ref{cMM a} and \ref{cMM f}, we extend the definition of epi-multiplication to $0\sq u = \ind_{\{0\}}$ for $u\in\fconvs$.

\goodbreak
The dual version of Proposition \ref{prop:mixed_hessian_vals} is the following result.

\begin{proposition}
\label{prop:mixed_hessian_vals_primal}
Let $\beta\in C_c(\R^n)$ and $0\le j\le n$. If  $u_{1},\dots,u_{n-j}\in\fconvs$, then the map $\oZ\colon\fconvs\to\R$, defined by
\begin{equation}\label{mhvp}
\oZ(u):=\int_{\R^n}\beta(x) \d\MAp(u[j],u_1\dots,u_{n-j};x),
\end{equation}
is a continuous, epi-translation invariant valuation, that is epi-homogeneous of degree $j$.  
\end{proposition}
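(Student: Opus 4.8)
The plan is to derive Proposition~\ref{prop:mixed_hessian_vals_primal} directly from its dual counterpart, Proposition~\ref{prop:mixed_hessian_vals}, via the Legendre--Fenchel transform, exactly as the proof of Theorem~\ref{thm mixed MA measures primal} derived the primal statement from Theorem~\ref{thm mixed MA measures}. The key observation is definitional: by \eqref{MA measure primal} and the polarization procedure used to build $\MAp$ on $(\fconvs)^n$, for any $u,u_1,\dots,u_{n-j}\in\fconvs$ one has
$$\MAp(u[j],u_1,\dots,u_{n-j};\cdot)=\MA(u^*[j],u_1^*,\dots,u_{n-j}^*;\cdot),$$
since both sides are the polarization (with respect to $\infconv$ on the left, with respect to $+$ on the right, which correspond under $*$ by Proposition~\ref{basics}\,(d)) of measures agreeing on the diagonal. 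Hence, writing $v:=u^*\in\fconvf$ and $v_i:=u_i^*\in\fconvf$, the functional $\oZ$ in \eqref{mhvp} satisfies $\oZ(u)=\oZ_0(u^*)$, where
$$\oZ_0(v):=\int_{\R^n}\beta(x)\d\MA(v[j],v_1,\dots,v_{n-j};x)$$
is precisely the functional treated in Proposition~\ref{prop:mixed_hessian_vals}.

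From here the argument is a dictionary translation. First I would note that the integral in \eqref{mhvp} is well-defined and finite because $\beta\in C_c(\R^n)$ and $\MAp(u[j],u_1,\dots,u_{n-j};\cdot)$ is a Radon measure (by Theorem~\ref{thm mixed MA measures primal}). Then, since Proposition~\ref{prop:mixed_hessian_vals} gives that $\oZ_0$ is a continuous, dually epi-translation invariant valuation on $\fconvf$ that is homogeneous of degree $j$, I would invoke Proposition~\ref{basics}: part (e) transfers continuity (epi-convergence $u_k\to u$ is equivalent to epi-convergence $u_k^*\to u^*$), part (c) transfers the valuation property (the pointwise max/min on $\fconvs$ correspond to the pointwise min/max on $\fconvf$, and $\oZ_0$ is a valuation in the required sense), the definition of epi-translation invariance on $\fconvs$ versus dual epi-translation invariance on $\fconvf$ matches exactly via $*$, and the remark following Proposition~\ref{basics} gives that epi-homogeneity of degree $j$ of $\oZ$ is equivalent to homogeneity of degree $j$ of $\oZ_0$. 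Each of these is a direct citation rather than a computation.

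Alternatively, and perhaps more cleanly, one can argue entirely intrinsically using Theorem~\ref{thm mixed MA measures primal}, mirroring the proof of Proposition~\ref{prop:mixed_hessian_vals}: continuity of $\oZ$ follows from the weak continuity of conjugate mixed Monge--Amp\`ere measures (item~\ref{cMM d}); epi-translation invariance follows from item~\ref{cMM e}; epi-additivity together with epi-homogeneity of degree $1$ in the first $j$ entries (item~\ref{cMM f}) yields, upon fixing the last $n-j$ arguments and using multilinearity, that $u\mapsto\MAp(u[j],u_1,\dots,u_{n-j};\cdot)$ is epi-homogeneous of degree $j$ — so $\oZ(\lambda\sq u)=\lambda^j\oZ(u)$; and the valuation property is item~\ref{cMM g}. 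I would present this intrinsic version, since it is self-contained and parallels the dual proof verbatim. There is essentially no obstacle here: the only point requiring a word of care is deducing degree-$j$ epi-homogeneity of the map $u\mapsto\MAp(u[j],u_1,\dots,u_{n-j};\cdot)$ from the degree-$1$ epi-homogeneity in a single slot, which follows since $\MAp((\lambda\sq u)[j],u_1,\dots,u_{n-j};\cdot)=\lambda^j\MAp(u[j],u_1,\dots,u_{n-j};\cdot)$ by applying item~\ref{cMM f} $j$ times (or directly from item~\ref{cMM a} with a single function $u$ scaled by $\lambda$).

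\begin{proof}
Note that the integral in \eqref{mhvp} is well-defined and finite, as $\beta\in C_c(\R^n)$ and the conjugate mixed Monge--Amp\`ere measure $\MAp(u[j],u_1,\dots,u_{n-j};\cdot)$ is a Radon measure by Theorem~\ref{thm mixed MA measures primal}. Continuity of $\oZ$ follows from the weak continuity of conjugate mixed Monge--Amp\`ere measures, that is, from Theorem~\ref{thm mixed MA measures primal}~\ref{cMM d}. Epi-translation invariance is a consequence of Theorem~\ref{thm mixed MA measures primal}~\ref{cMM e}. Applying Theorem~\ref{thm mixed MA measures primal}~\ref{cMM f} in the first entry $j$ times (keeping $u_1,\dots,u_{n-j}$ fixed), we obtain
$$\MAp((\lambda\sq u)[j],u_1,\dots,u_{n-j};\cdot)=\lambda^j\,\MAp(u[j],u_1,\dots,u_{n-j};\cdot)$$
for every $\lambda\ge0$ and $u\in\fconvs$, so that $\oZ(\lambda\sq u)=\lambda^j\oZ(u)$; hence $\oZ$ is epi-homogeneous of degree $j$. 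Finally, the valuation property of $\oZ$ follows from Theorem~\ref{thm mixed MA measures primal}~\ref{cMM g}.
\end{proof}
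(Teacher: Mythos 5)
Your proof is correct and matches what the paper intends: the paper states Proposition~\ref{prop:mixed_hessian_vals_primal} without proof, presenting it as the dual of Proposition~\ref{prop:mixed_hessian_vals}, whose proof cites items of Theorem~\ref{thm mixed MA measures} in exactly the way your intrinsic argument cites the corresponding items \ref{cMM d}--\ref{cMM g} of Theorem~\ref{thm mixed MA measures primal}. Both the duality translation you sketch and the self-contained version you formalize are faithful to the paper's approach, and your explicit step establishing degree-$j$ epi-homogeneity from item~\ref{cMM f} (or item~\ref{cMM a}) is the one small point worth writing out, which you do correctly.
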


\subsection{Connections to Hessian Measures}
\label{ss:hessian_measures_fconvs}
For  $u\in\fconvs\cap C_+^2(\R^n)$ and $0\le j\le n$, define the Hessian measure $\hp{j}(u; \cdot)$ as the push-forward through $\nabla u$ of the Hessian measure $\hd{n-j}(u;\cdot)$ of $u$, that is, 
$$\int_{\R^n} \beta(y)\d \hp{j}(u;y)=\int_{\R^n} \beta(\nabla u(x))\,[\Hess u(x)]_{n-j}\d x$$
for every Borel function $\beta\colon \R^n\to [0,\infty)$.  We remark that  the measure $\hp{j}(u;\cdot)$ can be defined for every $u\in\fconvs$ and is a marginal of a generalized Hessian measure (see \cite{Colesanti-Ludwig-Mussnig-3}). Moreover, for $u\in\fconvs \cap C_+^2(\R^n)$ we obtain from \eqref{MA measure primal} and Theorem \ref{thm mixed MA measures} \ref{MMA c} that
\begin{eqnarray*}
\int_{\R^n} \beta(y) \d\MAp(u[j], \tfrac12 h_B^2[n-j];y)&=&\int_{\R^n} \beta(y) \d\MA(u^*[j], \tfrac12 h_B^2[n-j];y)\\
&=& \int_{\R^n} \beta(y) \det(\Hess u^*[j], I_n[n-j])\d y\\
 &=&\frac1{\binom{n}{j}}\int_{\R^n} \beta(y) [\Hess u^*(y)]_{j}\d y\\
 &=&\frac1{\binom{n}{j}}\int_{\R^n} \beta(\nabla u(x)) [\Hess u(x)]_{n-j}\d x,
\end{eqnarray*}
where for the last step we used \eqref{eq:inv_hess} and \eqref{eq:jtrace}.
Hence, the measure
$$
\MAp(u[j],\tfrac12{h_{\Bn}^2}[n-j];\cdot),
$$
coincides up to the factor $\binom{n}{j}$ with $\hp{j}(u;\cdot)$ for $u \in C_+^2(\R^n)$.  The corresponding statement holds for general $u\in\fconvs$ by the denseness of smooth functions and the weak continuity of Hessian and conjugate mixed Monge--Amp\`ere measures.

\subsection{A Special Family of Conjugate Mixed Monge--Amp\`ere Measures}

We introduce the following family of conjugate mixed Monge--Amp\`ere measures. For $u\in\fconvs$ and $0\le j \le n$, we set
\begin{equation*}\label{def nhm}
\nhp{j}(u;\cdot):=\MAp(u[j],\ind_{\Bn}[n-j];\cdot).
\end{equation*}
By construction, this is a Radon measures on $\R^n$. 
A consequence of this definition is that
\begin{equation*}\label{def nhp}
\nhp{j}(u;\cdot)=\nhd{j}(u^*;\cdot)
\end{equation*}
for $u\in\fconvs$. It follows from Theorem \ref{thm mixed MA measures primal} \ref{cMM a} that this family of conjugate mixed Monge--Amp\`ere measures can also be obtained as coefficients of the following Steiner formula,
\begin{equation}
\label{eq:steiner_formula_maj_primal}
\MAp(u\infconv (r\sq \ind_{\Bn});B)=\sum_{j=0}^n \binom nj r^{n-j}\nhp{j}(u;B)
\end{equation}
for $B\in\Borel(\R^n)$ and $r\ge 0$.

\goodbreak
The next result describes properties of this family of conjugate Monge--Amp\`ere measures.
\begin{theorem}\label{properties of nhp} 
Let $u\in\fconvs$. The following statements hold.
\begin{enumerate}[label=\emph{(\alph*)}, ref={(\alph*)}, leftmargin=18pt]
\item\label{nhp a}For every $B\in\Borel(\R^n)$,
$$
\nhp{0}(u;B)=\kappa_n\delta_0(B).
$$
In particular, $\,\nhp{0}(u;\cdot)$ is independent of $u$.
\item\label{nhp b} For every $B\in\Borel(\R^n)$,
$$
\nhp{n}(u;B)=\vert{\partial u^*(B)}\vert=\MAp(u;B).
$$
\item \label{nhp 0} For $\,0\le j\le n$, 
$$\nhp{j}(u;\{0\})=\frac{\kappa_{n-j}}{\binom{n}{j}}\, V_{j}(\argmin u).$$
\item\label{nhp c} If $u^*\in C^2_+(V)$ with $V\subset \R^n$ open and $1\le j\le n$, then $\nhp{j}(u;\cdot)$ is absolutely continuous on $V$ with respect to  $n$-dimensional Lebesgue measure  and 
$$
\d\nhp{j}(u;x)=\det(\Hess u^*(x)[j],\Hess h_{\Bn}(x) [n-j])\d x
$$
for $x\in V$ with $x\ne 0$.
\item\label{nhp d} For $\,0\le j\le n$, the map $\nhp{j}\colon \fconvs\to \Radon$ is a continuous valuation.
\end{enumerate}
\end{theorem}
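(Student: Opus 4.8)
The plan is to deduce every item of Theorem~\ref{properties of nhp} from the corresponding item of Theorem~\ref{properties of nhm}, using the identity $\nhp{j}(u;\cdot)=\nhd{j}(u^*;\cdot)$ recorded just before the statement together with Proposition~\ref{basics}. Since $u\in\fconvs$ forces $v\!:=u^*\in\fconvf$ by Proposition~\ref{basics}\,(a), each assertion of Theorem~\ref{properties of nhm} may be invoked with $v=u^*$ and then read back in terms of $u$.

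With this in hand, \ref{nhp a} is immediate from Theorem~\ref{properties of nhm}\,\ref{nhm a} applied to $v=u^*$, and \ref{nhp b} follows from Theorem~\ref{properties of nhm}\,\ref{nhm b} applied to $v=u^*$ together with the definition $\MAp(u;\cdot)=\MA(u^*;\cdot)$ in \eqref{MA measure primal}. For \ref{nhp c}, note that $u^*\in C^2_+(V)$ in particular gives $u^*\in C^2(V)$, so Theorem~\ref{properties of nhm}\,\ref{nhm c} with $v=u^*$ yields that $\nhp{j}(u;\cdot)=\nhd{j}(u^*;\cdot)$ is absolutely continuous on $V\setminus\{0\}$ with density $\det(\Hess u^*(x)[j],\Hess h_{\Bn}(x)[n-j])$, which is exactly the claim. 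For \ref{nhp 0}, Theorem~\ref{properties of nhm}\,\ref{nhm 0} applied to $v=u^*$ gives $\nhp{j}(u;\{0\})=\tfrac{\kappa_{n-j}}{\binom{n}{j}}V_j(\partial u^*(0))$, so the only remaining point is the identity $\partial u^*(0)=\argmin u$; this is a one-line consequence of Fenchel's equality: $y\in\partial u^*(0)$ if and only if $0\in\partial u(y)$, i.e.\ if and only if $u(y)=\min_{z\in\R^n}u(z)$.

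Finally, for \ref{nhp d} I would argue as follows. If $u_k\to u$ epi-converges in $\fconvs$, then $u_k^*\to u^*$ epi-converges in $\fconvf$ by Proposition~\ref{basics}\,(e), hence $\nhd{j}(u_k^*;\cdot)\to\nhd{j}(u^*;\cdot)$ weakly by Theorem~\ref{properties of nhm}\,\ref{nhm d}, which is the desired continuity of $\nhp{j}$. For the valuation property, suppose $u_1\vee u_2$ and $u_1\wedge u_2$ lie in $\fconvs$; by Proposition~\ref{basics}\,(c) the functions $u_1^*\vee u_2^*$ and $u_1^*\wedge u_2^*$ lie in $\fconvf$ and satisfy $(u_1\vee u_2)^*=u_1^*\wedge u_2^*$ and $(u_1\wedge u_2)^*=u_1^*\vee u_2^*$; applying the valuation identity of Theorem~\ref{properties of nhm}\,\ref{nhm d} to $u_1^*,u_2^*$ and rewriting both sides via $\nhp{j}(\cdot;\cdot)=\nhd{j}(\,(\cdot)^*;\cdot)$ gives $\nhp{j}(u_1;\cdot)+\nhp{j}(u_2;\cdot)=\nhp{j}(u_1\wedge u_2;\cdot)+\nhp{j}(u_1\vee u_2;\cdot)$. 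I do not expect any genuine obstacle here: the argument is essentially a dictionary translation through convex conjugation, and the only step that is not a pure restatement is the identification $\partial u^*(0)=\argmin u$ in \ref{nhp 0}. (As an alternative, \ref{nhp c} and \ref{nhp d} can also be obtained directly from Theorem~\ref{thm mixed MA measures primal}\,\ref{cMM c}, \ref{cMM d} and \ref{cMM g} together with the formula \eqref{Jn} for $\Hess h_{\Bn}$ and the polarization \eqref{polarisation md}.)
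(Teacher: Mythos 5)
Your proof is correct and follows essentially the same route as the paper: every item is transported from Theorem~\ref{properties of nhm} via the identity $\nhp{j}(u;\cdot)=\nhd{j}(u^*;\cdot)$ together with Proposition~\ref{basics}, and for \ref{nhp 0} one uses $\partial u^*(0)=\argmin u$. The paper simply cites \cite[Theorem 11.8]{RockafellarWets} for that last identity, whereas you rederive it from the subdifferential conjugacy relation; otherwise the arguments coincide.
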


\begin{proof}
The statements follow from the corresponding statements in Theorem \ref{properties of nhm} by duality. For \ref{nhp 0}, we use that $\partial u^*(0)=\argmin u$ (see \cite[Theorem 11.8]{RockafellarWets}).
\end{proof}

In the following, for  $u\in\fconvs$, we will use the fact that $y\in\partial u(x)$ if and only if $x\in\partial u^*(y)$ (see, for example, \cite[Theorem 23.5]{Rockafellar}). Combined with \eqref{eq:inv_hess}, it implies that  $u\in C_+^2(U)$ for some open set $U\subset \R^n$ if and only if $u^*\in C_+^2(V)$ with
$V:=\{\nabla u(x) \colon x\in U\}$.
In particular, we obtain that the set $V$ is open and $\nabla u:U\to V$ is a bijection.

For the following result, we recall that  $\tau_i(u,x)$ is the $i$th elementary symmetric function of the principal curvatures of the sublevel set of $u$ passing through $x$ for $x\notin \argmin u$.

\begin{theorem}\label{representation of Steiner measures primal smooth} Let $u\in\fconvs$ and $1\le j\le n-1$.  If $u\in C_+^2(U)$ for an open set $U\subset \R^n$ and $V:=\{\nabla u(x) \colon x\in U\}$, then
\begin{equation}\label{bruum}
\nhp{j}(u;B)=\frac1{\binom nj}\int_{(\nabla u)^{-1}(B)}\tau_{n-j}(u,x)\d x
\end{equation}
for every Borel set $B\subset V\backslash\{0\}$. 
Equivalently, 
\begin{equation}\label{bruuum}
\int_{\R^n}\beta(y)\d\nhp{j}(u;y)=\frac1{\binom nj}\int_{\R^n}\beta(\nabla u(x))\,\tau_{n-j}(u,x)\d x
\end{equation}
for every $\beta\in C_c(V)$ .
\end{theorem}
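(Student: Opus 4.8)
The plan is to reduce the statement to the smooth setting where both sides are honest Lebesgue integrals, and then identify the integrand of the mixed Monge--Amp\`ere measure as $\tau_{n-j}(u,x)$ up to the factor $\binom nj$. First I would record that, since $u\in C^2_+(U)$, the conjugate $u^*$ is $C^2_+$ on $V=\nabla u(U)$ by Proposition~\ref{basics}, and $\nabla u\colon U\to V$ is a diffeomorphism with $\Hess u^*(\nabla u(x))=(\Hess u(x))^{-1}$ by \eqref{eq:inv_hess}. On $V\setminus\{0\}$, Theorem~\ref{properties of nhp}~\ref{nhp c} gives
$$
\d\nhp{j}(u;y)=\det\big(\Hess u^*(y)[j],\Hess h_{\Bn}(y)[n-j]\big)\d y,
$$
which is the content one needs before any change of variables.

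Next I would perform the change of variables $y=\nabla u(x)$ in \eqref{bruum}. The Jacobian of $\nabla u$ is $\det(\Hess u(x))$, and, combining \eqref{eq:inv_hess} with the multilinearity of the mixed discriminant, one gets
$$
\det\big(\Hess u^*(\nabla u(x))[j],\Hess h_{\Bn}(\nabla u(x))[n-j]\big)\,\det(\Hess u(x))
=\det\big(\Hess u(x)[n-j],\,\Hess u(x)\,\Hess h_{\Bn}(\nabla u(x))\,\Hess u(x)[j]\big),
$$
using $\det(ABA,\ldots)$-type identities for mixed discriminants under the congruence $A=\Hess u(x)$ (since for invertible $A$ one has $\det(A B_1 A,\dots,A B_n A)=(\det A)^2\det(B_1,\dots,B_n)$, applied to $B_i=A^{-1}$ in the first $j$ slots and $B_i=\Hess h_{\Bn}$ in the last $n-j$). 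So after the substitution the right side of \eqref{bruum} becomes $\binom nj^{-1}\int_{(\nabla u)^{-1}(B)}\det(I_n[j],\,\Hess u(x)\Hess h_{\Bn}(\nabla u(x))\Hess u(x)[n-j])\d x$ (up to reordering the $j$ versus $n-j$ blocks, which I would keep track of carefully).

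The main obstacle — and the geometric heart of the argument — is to show that this mixed discriminant equals $\tau_{n-j}(u,x)$, the $(n-j)$th elementary symmetric function of the principal curvatures of the sublevel set $\{u\le u(x)\}$ at $x$. The clean way is to use \eqref{Jn}, $\Hess h_{\Bn}(p)=\tfrac1{|p|}(I_n-\tfrac{p}{|p|}\otimes\tfrac{p}{|p|})$ with $p=\nabla u(x)$, together with the classical fact that the second fundamental form of the level set $\{u=u(x)\}$ with respect to the outer unit normal $\nu=\nabla u(x)/|\nabla u(x)|$ is the restriction of $\tfrac1{|\nabla u(x)|}\Hess u(x)$ to the tangent hyperplane $\nu^\perp$, so its principal curvatures are the nonzero eigenvalues of the symmetric operator obtained by composing the projection onto $\nu^\perp$ with $\tfrac1{|\nabla u(x)|}\Hess u(x)$. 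Expanding $\det(\cdot)$ as a sum over choices of which slots contribute $I_n$ versus the rank-$(n-1)$ matrix $\Hess u(x)\Hess h_{\Bn}(\nabla u(x))\Hess u(x)$, and using $[A]_k=\binom nk\det(A[k],I_n[n-k])$ from \eqref{eq:mixed_dis_hessian} to rewrite the mixed discriminant of $(n-j)$ copies of that matrix with $j$ copies of $I_n$ as $\binom{n}{n-j}^{-1}$ times its $(n-j)$th elementary symmetric function, one identifies the result with $\tau_{n-j}(u,x)$. I would note that the rank-$(n-1)$ matrix has $0$ as an eigenvalue (in the $\nu$ direction), so its $(n-j)$th and lower symmetric functions coincide with those of the restricted curvature operator; this is exactly the $\binom nj$ bookkeeping that produces the stated normalization. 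Finally, the equivalence of \eqref{bruum} and \eqref{bruuum} is the standard fact that a statement "$\mu(B)=\int_{(\nabla u)^{-1}(B)}g\,\d x$ for all Borel $B\subset V\setminus\{0\}$" is the same as "$\int\beta\,\d\mu=\int\beta(\nabla u(x))\,g(x)\,\d x$ for all $\beta\in C_c(V)$," since $\nhp{j}(u;\cdot)$ restricted to $V\setminus\{0\}$ is a Radon measure and $\{0\}$ has $\beta$-mass zero in the sense that any $\beta\in C_c(V)$ can be approximated; more simply, \eqref{bruuum} follows from \eqref{bruum} by the change-of-variables formula for push-forward measures, and conversely \eqref{bruum} follows from \eqref{bruuum} by a monotone class / outer regularity argument applied to indicator functions of Borel sets in $V\setminus\{0\}$.
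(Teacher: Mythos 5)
Your approach is genuinely different from the paper's. The paper proves this theorem via the parallel-body map $T_r(x) = x + r\,\nabla u(x)/|\nabla u(x)|$: Lemma~\ref{Jacobian lemma} computes $\det(\D T_r(x)) = \sum_{j=0}^{n-1} r^j \tau_j(u,x)$ by choosing coordinates aligned with the level set, and the proof then equates $\vert\partial(u^*+r\,h_{\Bn})(B)\vert$ with $\int_{(\nabla u)^{-1}(B)}\det(\D T_r(x))\,\d x$, expands the left side via the Steiner-type formula \eqref{eq:steiner_formula_maj_primal}, and compares coefficients of $r$. Your route instead starts from the density formula in Theorem~\ref{properties of nhp}~\ref{nhp c}, performs the change of variables $y=\nabla u(x)$, and identifies the resulting mixed discriminant with $\tau_{n-j}(u,x)$ pointwise; this avoids the polynomial interpolation in $r$ entirely, at the price of needing a congruence identity for mixed discriminants. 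Both arguments ultimately depend on the same geometric input --- the Gauss map of the sublevel set is $x\mapsto\nabla u(x)/|\nabla u(x)|$, whose differential has eigenvalues $\kappa_1,\dots,\kappa_{n-1},0$ --- but package it quite differently, and yours is a legitimate alternative.

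However, there is a computational slip in your congruence step that would break the final identification. With $A := \Hess u(x)$ and $H := \Hess h_{\Bn}(\nabla u(x))$, after the change of variables you need $\det(A^{-1}[j],H[n-j])\cdot\det A$, and the correct conjugation is by $A^{\pm1/2}$: since
\[
\det A\cdot\det(A^{-1}+\lambda H)=\det(I_n+\lambda AH),
\]
one gets $\det(A^{-1}[j],H[n-j])\cdot\det A=\binom{n}{j}^{-1}[AH]_{n-j}=\det\bigl(I_n[j],\,A^{1/2}HA^{1/2}[n-j]\bigr)$. The symmetric matrix $A^{1/2}HA^{1/2}$ is similar to $HA=\D\bigl(\nabla u/|\nabla u|\bigr)$ and therefore has eigenvalues $\kappa_1,\dots,\kappa_{n-1},0$, which gives $[AH]_{n-j}=\tau_{n-j}(u,x)$ as required. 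Your written formula $\det\bigl(I_n[j],\,\Hess u\,\Hess h_{\Bn}\,\Hess u[n-j]\bigr)$ involves $AHA$ instead, which is similar to $HA^2$ rather than $HA$, so its $(n-j)$th elementary symmetric function is \emph{not} $\tau_{n-j}(u,x)$; the preceding display also has the multiplicities of the $j$ and $n-j$ blocks interchanged and is missing a factor $(\det A)^{-1}$. The strategy is sound and the slip is fixable, but as written the identification of the integrand does not close.
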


\goodbreak
For the proof we need the following result. 
\begin{lemma}\label{Jacobian lemma} Let $u\in\fconvs$ be such that $u\in C_+^2(U)$ for an open set $U \subset \R^n$ and let $r>0$. If $T_r\colon U\backslash\argmin u\to\R^n$ is defined by
$$
T_r(x):=x+r\,\frac{\nabla u(x)}{|\nabla u(x)|},
$$
then, for the Jacobi matrix $\D T_r$, 
we have
$$
\det(\D T_r(x))=
\sum_{j=0}^{n-1}r^j\tau_j(u,x)
$$
for every $x\in U\backslash\argmin u$.
\end{lemma}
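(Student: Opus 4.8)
The plan is to compute the determinant of the Jacobi matrix $\D T_r$ directly in terms of the geometry of the sublevel set of $u$ passing through $x$, using that $\tau_j(u,x)$ is built from the principal curvatures of this sublevel set. First I would fix $x_0 \in U\backslash\argmin u$ and exploit the rotation invariance of the statement: after an orthogonal change of coordinates we may assume $\nabla u(x_0)$ is a positive multiple of $e_n$. Writing $\nu(x) := \nabla u(x)/|\nabla u(x)|$ for the unit normal (which equals the outer unit normal of the sublevel set $\{y : u(y) \le u(x)\}$ at $x$, since $u$ is increasing in the normal direction and $u \in C^2_+$), we have $T_r(x) = x + r\,\nu(x)$, so $\D T_r(x) = I_n + r\,\D\nu(x)$. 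The key computation is to identify the eigenvalues of $\D\nu(x_0)$: the derivative of the Gauss map of the sublevel set is exactly the (shape operator of the) second fundamental form, whose eigenvalues are the principal curvatures $k_1(u,x_0),\dots,k_{n-1}(u,x_0)$, together with one extra direction — namely $\nu(x_0)$ itself — on which $\D\nu$ acts trivially because $\nu$ is constant along the ray (more precisely, $|\nabla u|$ varies but $\nu$ is homogeneous of degree $0$ in a way that makes $\D\nu(x_0)\nu(x_0)=0$; this needs a short argument since $\nu$ is only defined on the hypersurface-direction part).

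More carefully, I would differentiate $\nu = \nabla u / |\nabla u|$ using the chain rule: $\D\nu = \frac{1}{|\nabla u|}\big(\Hess u - \nu \otimes (\Hess u\,\nu)\big) = \frac{1}{|\nabla u|}(I_n - \nu\otimes\nu)\,\Hess u$. At $x_0$, in coordinates with $\nu(x_0)=e_n$, the matrix $I_n - \nu\otimes\nu$ is the projection onto $e_n^\perp$, so $\D\nu(x_0)$ has last row zero, hence $e_n$ (i.e. $\nu(x_0)$) is a left-null... actually a (right) argument: the last column of $\D T_r(x_0) = I_n + r\,\D\nu(x_0)$ is $e_n$ (since the last row of $\D\nu$ being zero means its last column entry structure gives $(\D T_r)_{nn}=1$ and $(\D T_r)_{in}=r(\D\nu)_{in}$). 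Rather than fuss over this, the cleanest route is: $\det(\D T_r(x_0)) = \det(I_n + r\,\D\nu(x_0))$, and since $\D\nu(x_0)$ maps $e_n^\perp$ into $e_n^\perp$ (its range lies in $e_n^\perp$ because of the projection $I_n-\nu\otimes\nu$) and annihilates nothing forced, we block-triangularize: restricted to $e_n^\perp$, $\D\nu(x_0)$ is the Weingarten map $W$ of the sublevel hypersurface at $x_0$ (its eigenvalues are $k_1,\dots,k_{n-1}$), and the $e_n$-direction contributes a factor $1$. Therefore $\det(\D T_r(x_0)) = \det(I_{n-1} + rW) = \prod_{i=1}^{n-1}(1+r\,k_i(u,x_0)) = \sum_{j=0}^{n-1} r^j\,[W]_j = \sum_{j=0}^{n-1} r^j\,\tau_j(u,x_0)$, using the definition of $\tau_j$ as the $j$th elementary symmetric function of the principal curvatures.

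I would then note that the identification of the restriction of $\D\nu(x_0)$ to $e_n^\perp$ with the Weingarten map is standard: parametrize the sublevel set $\{u = u(x_0)\}$ near $x_0$ as a graph over the tangent hyperplane $e_n^\perp$, so that the Gauss map of this hypersurface agrees with $\nu$ along the hypersurface, and its differential is by definition the shape operator, whose eigenvalues are the principal curvatures. The only mild subtlety is that $\nu$ as defined above is a function on the open set $U$, not just on the hypersurface, so I must check that its differential \emph{restricted to tangent directions of the level set} coincides with the differential of the geometric Gauss map; but this is immediate because $\nu$ restricted to the level set equals the geometric normal field, and restriction commutes with differentiation along the level set. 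The fact that $u \in C^2_+(U)$ guarantees $\Hess u(x_0)$ is positive definite, so $\nabla u \ne 0$ on $U\backslash\argmin u$ — actually one should observe $\nabla u(x) = 0$ forces $x \in \argmin u$, so $\nu$ and $T_r$ are well-defined and $C^1$ on $U\backslash\argmin u$ — and that the sublevel sets are smooth convex hypersurfaces there with well-defined nonnegative principal curvatures.

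The main obstacle is purely bookkeeping: correctly setting up the local frame so that the block decomposition $\R^n = e_n^\perp \oplus \R\,\nu(x_0)$ diagonalizes the problem, and carefully justifying that $\D\nu(x_0)$ leaves $e_n^\perp$ invariant and restricts there to the Weingarten map — everything else (the elementary-symmetric-function expansion of $\det(I+rW)$, the reduction to $\nabla u(x_0) = |\nabla u(x_0)|\,e_n$ by rotation invariance) is routine. I do not expect to need anything beyond the standard differential geometry of convex hypersurfaces together with the relation $\D\nu = \frac{1}{|\nabla u|}(I_n - \nu\otimes\nu)\Hess u$.
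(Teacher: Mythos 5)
Your proof is correct and follows essentially the same approach as the paper's: fix coordinates so that $\nu(x_0)=e_n$, show the last row of $\D\nu(x_0)$ vanishes so $\det \D T_r(x_0)$ reduces to $\det(I_{n-1}+rW)$ for the Weingarten map $W$ of the sublevel hypersurface, and then expand in $r$ via the elementary symmetric functions of the principal curvatures. The only cosmetic difference is that you deduce the vanishing of the last row from the identity $\D\nu=\tfrac{1}{|\nabla u|}(I_n-\nu\otimes\nu)\Hess u$, whereas the paper computes $\partial N_n/\partial x_j=0$ directly and also diagonalizes the tangential block by choosing principal-curvature directions.
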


\begin{proof} Let $x\in U\backslash\argmin u$. Clearly, 
$$
\D T_r(x)=I_n+r\,\D N(x)
$$
where $N(x)=(N_1(x),\dots,N_n(x))$ is defined as
$$
N(x):=\frac{\nabla u(x)}{|\nabla u(x)|}.
$$
Let $t\!:=u(x)$. We choose a coordinate system such that
\begin{equation}\label{direction}
\nabla u(x)=\lambda e_n=\lambda \nu_t(x),
\end{equation}
where $\nu_t(x)$ denotes the outer unit normal to $\{u\le t\}$ at $x$ and $\lambda=|\nabla u(x)|>0$. We may also assume that, for $1\le j\le n-1$, the vector $e_j$ is a direction of principal curvature for $\partial \{u\le t\}$ at $x$ with corresponding principal curvature $\kappa_j(u,x)$. As $N$ is an extension of $\nu_t$, we obtain
$$
\D N(x)=\left(
\begin{array}{cccccc}
\kappa_1(u,x)&0&\cdots&0&\frac{\partial N_1}{\partial x_n}(x)\\[2pt]
0&\kappa_2(u,x)&\cdots&0&\frac{\partial N_2 }{\partial x_n}(x)\\
\vdots&\vdots&\ddots&\vdots&\vdots\\
0&0&\cdots&\kappa_{n-1}(u,x)&\frac{\partial N_{n-1}}{\partial x_n}(x)\\[2pt]
\frac{\partial N_n}{\partial x_1}(x)&\frac{\partial N_n}{\partial x_2}(x)&\cdots&\frac{\partial N_n}{\partial x_{n-1}}(x)&\frac{\partial N_n}{\partial x_n}(x)
\end{array}
\right).
$$

On the other hand, using \eqref{direction}, we obtain
\begin{eqnarray*}
\frac{\partial N_n}{\partial x_j}(x)&=&\frac{\partial}{\partial x_j}\left(\frac {1}{|\nabla u(x)|} \frac {\partial u(x)}{\partial x_n}\right)\\
&=&\frac{1}{|\nabla u(x)|}\,\frac{\partial^2 u(x)}{\partial x_j\partial x_n}-\frac{1}{|\nabla u(x)|^2} \frac{\partial u(x)}{\partial x_n} \frac{\partial}{\partial x_j} |\nabla u(x)|\\[1pt]
&=&\frac{1}{|\nabla u(x)|}\,\frac{\partial^2 u(x)}{\partial x_j\partial x_n}-\frac{1}{|\nabla u(x)|^3} \frac{\partial u(x)}{\partial x_n} \sum_{i=1}^n \frac{\partial u(x)}{\partial x_i} \frac{\partial^2 u(x)}{\partial x_i\partial x_j}\\
&=&0
\end{eqnarray*}
for $1\le j\le n$. Here, for the last equality, we used that $\frac{\partial u(x)}{\partial x_i}=0$ for all $1\leq i \leq n-1$ because of the choice of our coordinate system.
Therefore, 
$$
\D T_r(x)=\left(
\begin{array}{cccccc}
1+r\kappa_1(u,x)&0&\cdots&0&\frac{\partial N_1}{\partial x_n}(x)\\[2pt]
0&1+r\kappa_2(u,x)&\cdots&0&\frac{\partial N_2}{\partial x_n}(x)\\
\vdots&\vdots&\ddots&\vdots&\vdots\\
0&0&\cdots&1+r\kappa_{n-1}(u,x)&\frac{\partial N_{n-1}}{\partial x_n}(x)\\
0&0&\cdots&0&1
\end{array}
\right)
$$
and
$$\det(\D T_r(x))= \prod_{i=1}^{n-1}(1+r\,\kappa_i(u,x)),$$
which implies the representation formula.
\end{proof}

\goodbreak
\begin{proof}[Proof of Theorem \ref{representation of Steiner measures primal smooth}]
Formula \eqref{bruuum} directly follows from \eqref{bruum}. So, we have to prove \eqref{bruum}.

Let $B\subset V\backslash\{0\}$ be a Borel set. For $r>0$, let $T_r\colon U\backslash \argmin u$ be the map defined in Lemma~\ref{Jacobian lemma}. Note that $U\backslash \argmin u=\nabla u^{-1}(V\backslash\{0\})$. We have
\begin{eqnarray*}
\vert \partial (u^* +r\,h_{\Bn})(B)\vert
&=&\Big\vert{\big\{\nabla u^*(y)+r\frac{y}{|y|}\colon y\in B\big\}}\Big\vert\\
&=&\Big\vert{\big\{x+r\frac{\nabla u(x)}{|\nabla u(x)|}\colon x\in (\nabla u)^{-1}( B)\big\}}\Big\vert\\
&=&\int_{(\nabla u)^{-1}(B)}\det(\D T_r(x))\d x\\
&=&\sum_{j=0}^{n-1}r^j\int_{(\nabla u)^{-1}(B)}\tau_j(u,x)\d x,
\end{eqnarray*}
where we have used Lemma \ref{Jacobian lemma}. On the other hand,  by the definition of the Monge--Amp\`ere measure and of the conjugate Monge--Amp\`ere measure and \eqref{eq:steiner_formula_maj_primal}, we have
\begin{equation*}
\vert{\partial (u^* +r\,h_{\Bn})(B)}\vert= \MAp(u\infconv (r \sq \ind_{\Bn}); B)=\sum_{j=1}^n \binom nj r^{n-j}\nhp{j}(u;B).
\end{equation*}
The conclusion follows from comparing coefficients. 
\end{proof}

\section{Connecting $\nhd{j}(v; \cdot)$ and Hessian Measures}
\label{se:steiner}
The purpose of this section is to prove Proposition~\ref{prop:transform_ints}, which shows how integrals of radially symmetric functions with respect to Hessian measures can be written in terms of integrals with respect to the new family of mixed Monge--Amp\`ere measures.
This result is essential for our new proof of the existence of functional intrinsic volumes, Theorem~\ref{thm:existence_singular_hessian_vals} and Theorem~\ref{thm:existence_singular_hessian_vals_dual}, as well as for the proof of the new representations, Theorem~\ref{thm:steiner_measures} and Theorem~\ref{thm:steiner_measures_dual}.

\subsection{Reilly-Type Lemmas}
We will need the following result by Reilly \cite[Proposition 2.1]{Reilly}  (or see \cite[(2.10)]{Trudinger:Wang2002}).

\begin{lemma}[Reilly]
\label{le:sum_diff_c_ij_zero}
If $v_1,\ldots,v_{n-1}\in C^3(\R^n)$ and $1\le j\le n$, then
$$\sum_{i=1}^n \frac{\partial}{\partial x_i} D_{ij}(\Hess v_1(x),\ldots, \Hess v_{n-1}(x))=0$$
for every $x\in\R^n$. 
\end{lemma}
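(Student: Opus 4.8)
The plan is to prove Reilly's identity directly from the structure of the coefficient maps $D_{ij}$ and the fact that the mixed discriminant is, entry-wise, a linear functional of the Hessian with coefficients that are themselves mixed-discriminant-type expressions in the remaining Hessians. Recall from \eqref{Dij} that $D_{ij}(A_1,\dots,A_{n-1})$ is defined by
$$\det(A_1,\dots,A_{n-1},A_n)=\sum_{i,j=1}^n D_{ij}(A_1,\dots,A_{n-1})\,a^n_{ij},$$
so $D_{ij}$ is obtained by expanding the multilinear, symmetric functional $\det(\cdot,\dots,\cdot)$ in the entries of its last argument. Concretely, using the permutation formula for the mixed discriminant, $D_{ij}(A_1,\dots,A_{n-1})$ is (up to the normalizing factor) a sum over permutations of products of entries $a^{\sigma(k)}_{p_k q_k}$ with the index pair $(i,j)$ omitted; in particular $D_{ij}$ depends polynomially (indeed multilinearly) on the entries of $A_1,\dots,A_{n-1}$, and it is symmetric in those $n-1$ matrix arguments.

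The key step is the following cancellation. Fix $j$ and write $c_{ij}(x):=D_{ij}(\Hess v_1(x),\dots,\Hess v_{n-1}(x))$. Differentiating, $\partial_{x_i} c_{ij}$ is a sum of terms, each of which replaces one factor $\partial^2_{pq} v_\ell$ in the expansion of $D_{ij}$ by a third derivative $\partial_{x_i}\partial^2_{pq} v_\ell=\partial^2_{pq}(\partial_{x_i} v_\ell)$ (this is where $v_\ell\in C^3$ is used, so the mixed partials commute). Summing over $i$, one groups the resulting terms according to which function $v_\ell$ was differentiated. For each fixed $\ell$, the relevant sum is $\sum_i \partial_{x_i}\big[\text{(coefficient of }\partial^2_{pq} v_\ell\text{ in }D_{ij})\cdot \partial^2_{pq} v_\ell\big]$ summed appropriately over $p,q$; the coefficient here is, up to sign, a $D$-type coefficient of the mixed discriminant of the remaining $n-2$ Hessians together with $\Hess v_\ell$ arranged with the right index pattern, and the combinatorial structure of the permutation sum forces this expression to vanish identically because each third-order partial $\partial_{x_i}\partial^2_{pq} v_\ell = \partial_{x_p}\partial^2_{iq} v_\ell$ appears twice with opposite signs (coming from transposing the roles of the index $i$, which is summed, and one of the indices carried by $v_\ell$). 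This is exactly the classical ``divergence-free'' property of the cofactor-type tensors associated with the mixed discriminant, which for a single function is the statement that the cofactor matrix of a Hessian is row-divergence-free.

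Concretely I would carry it out as follows. First, record the explicit permutation formula for $D_{ij}$ from \eqref{Dij} and note its multilinearity and symmetry in $A_1,\dots,A_{n-1}$. Second, observe it suffices to prove the identity when $v_1=\dots=v_{n-1}=v$ for an arbitrary $v\in C^3(\R^n)$: by multilinearity and symmetry, $D_{ij}$ is the full polarization of the single-variable map $A\mapsto D_{ij}(A,\dots,A)$ (which, up to a binomial factor, is the $(i,j)$ cofactor of $A$), so the general identity is the polarization of the special one — one replaces $v$ by $\lambda_1 v_1+\dots+\lambda_{n-1}v_{n-1}$, uses that $\sum_i \partial_{x_i}D_{ij}(\Hess(\sum\lambda_k v_k))=0$ for all $\lambda_k$, and extracts the multilinear coefficient. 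Third, prove the single-function statement $\sum_i \partial_{x_i}\operatorname{cof}_{ij}(\Hess v)=0$; this is a standard fact (the Piola-type identity / divergence-free property of cofactors), provable by expanding the cofactor as a sum over permutations and pairing terms via the symmetry of third partials $\partial_i\partial_p\partial_q v=\partial_p\partial_i\partial_q v$. The main obstacle is purely bookkeeping: setting up the permutation/sign conventions in the polarized $D_{ij}$ so that the pairwise cancellation is transparent rather than a brute-force index computation; reducing to the single-function cofactor identity via polarization is what makes this manageable.
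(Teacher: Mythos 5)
Your proposal is correct. Note that the paper does not prove this lemma at all: it cites Reilly's Proposition 2.1 and Trudinger--Wang (2.10), so there is no ``paper's own proof'' to compare against. Your argument is a valid self-contained proof, and it matches the standard approach modulo organization.

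The two ingredients are both sound. First, the polarization reduction is correct: since $D_{ij}$ is multilinear and symmetric in its $n-1$ matrix slots and $\Hess(\sum_k \lambda_k v_k)=\sum_k\lambda_k\Hess v_k$, the identity $\sum_i\partial_{x_i}D_{ij}(\Hess w,\ldots,\Hess w)=0$ applied to $w=\sum_k\lambda_k v_k$ yields a polynomial identity in $\lambda_1,\ldots,\lambda_{n-1}$ whose coefficient of $\lambda_1\cdots\lambda_{n-1}$ is $(n-1)!\sum_i\partial_{x_i}D_{ij}(\Hess v_1,\ldots,\Hess v_{n-1})$, hence this vanishes. Second, the diagonal case is the Piola identity: expanding $\det(A[n-1],B)$ via \eqref{polarisation md} in $\det(A+tB)$ shows $D_{ij}(A,\ldots,A)=\tfrac1n\operatorname{cof}_{ij}(A)$ (a factor of $1/n$, not a binomial, but this does not affect the vanishing). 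The pairwise cancellation then goes exactly as you describe: writing $\operatorname{cof}_{ij}(A)=\sum_{\sigma:\sigma(i)=j}\operatorname{sgn}(\sigma)\prod_{k\ne i}\partial_k\partial_{\sigma(k)}v$, applying $\sum_i\partial_i$, and pairing the contribution of $(i,m,\sigma)$ with that of $(m,i,\sigma\circ(i\,m))$ gives equal third derivatives $\partial_i\partial_m\partial_{\sigma(m)}v$ with opposite signs from $\operatorname{sgn}$. Reilly's original proof does the index bookkeeping directly on the mixed Newton tensor rather than reducing to the cofactor case; your polarization step is a clean organizational shortcut that reduces the general multilinear statement to the single-function Piola identity, which many readers will already know.
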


The following result shows that
$$(v_0,\ldots,v_n)\mapsto \int_{\R^n} v_0(x) \det(\Hess v_1(x),\ldots,\Hess v_n(x))\d x$$
with $v_0,\ldots,v_n\in C^2(\R^n)$
is symmetric in its entries if at least one the functions has compact support. We remark that this corresponds to the symmetry of mixed volumes in the following representation,
$$V(K_1,\ldots,K_n) = \int_{\sn} h_{K_1}(y) \det(\HessTilde h_{K_2}(y),\ldots,\HessTilde h_{K_n}(y))\d y,$$
for sufficiently smooth $K_1,\ldots,K_n\in \cK^n$, where $\HessTilde h_K(y)$ is the restriction of $\Hess h_K$ to the tangent space of $\sn$ at $y\in\sn$ (see for example equations (2.68) and (5.19) in \cite{Schneider:CB2}).

\begin{lemma}
\label{le:change_int_mixed_dis}
If $v_0,\ldots,v_n\in C^2(\R^n)$ are such that at least one of the functions has compact support, then
\begin{equation}
\label{eq:change_int_mixed_dis}
\int\limits_{\R^n} v_0(x)  \det(\Hess v_1(x),\ldots,\Hess v_n(x))\d x = \int\limits_{\R^n} v_n(x) \det(\Hess v_1(x),\ldots,\Hess v_{n-1}(x),\Hess v_0(x))\d x.
\end{equation}
\end{lemma}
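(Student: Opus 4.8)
The plan is to reduce \eqref{eq:change_int_mixed_dis} to a statement about a single integration by parts, using the divergence-form structure of the mixed discriminant encoded in \eqref{Dij}. Write $a^0_{kl} = \partial^2 v_0/\partial x_k \partial x_l$ and similarly for each $v_i$, and recall from \eqref{Dij} that
\[
\det(\Hess v_1(x),\ldots,\Hess v_n(x)) = \sum_{k,l=1}^n D_{kl}(\Hess v_1(x),\ldots,\Hess v_{n-1}(x))\, a^n_{kl}.
\]
Substituting this into the left side of \eqref{eq:change_int_mixed_dis}, the integrand becomes $v_0(x)\sum_{k,l} D_{kl}(\ldots)\, \partial^2 v_n/\partial x_k\partial x_l$. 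The idea is to integrate by parts twice in the variables $x_k$ and $x_l$ to move both derivatives off $v_n$ and onto $v_0$. Since at least one of the functions has compact support, after a standard approximation (or a cutoff argument) all boundary terms vanish, so no boundary contributions appear.

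The key point that makes the double integration by parts clean is Reilly's Lemma~\ref{le:sum_diff_c_ij_zero}: the coefficient maps $D_{kl}(\Hess v_1,\ldots,\Hess v_{n-1})$ are divergence-free in the sense that $\sum_k \partial_{x_k} D_{kl}(\Hess v_1(x),\ldots,\Hess v_{n-1}(x)) = 0$. Therefore, when we integrate by parts in $x_k$, the term where the derivative hits $D_{kl}$ drops out, and likewise after the second integration by parts in $x_l$ (using symmetry $D_{kl}=D_{lk}$ of the relevant coefficients, or simply reindexing). What survives is exactly
\[
\int_{\R^n} \Big(\sum_{k,l=1}^n D_{kl}(\Hess v_1(x),\ldots,\Hess v_{n-1}(x))\, a^0_{kl}\Big) v_n(x)\, \d x,
\]
which by \eqref{Dij} again equals $\int_{\R^n} v_n(x)\det(\Hess v_1(x),\ldots,\Hess v_{n-1}(x),\Hess v_0(x))\, \d x$, the right side of \eqref{eq:change_int_mixed_dis}.

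One technical wrinkle is that Lemma~\ref{le:sum_diff_c_ij_zero} as stated requires $v_1,\ldots,v_{n-1}\in C^3(\R^n)$, whereas here we only assume $C^2$. I would handle this by first proving \eqref{eq:change_int_mixed_dis} under the stronger hypothesis that $v_1,\ldots,v_{n-1}$ (say all $v_i$) are in $C^3(\R^n)$ and then removing it by a mollification argument: replace each $v_i$ by $v_i * \varphi_\varepsilon$, note that the mollified functions are convex and converge in $C^2$ on compact sets, observe that the compact support of whichever function carries it is preserved (or confined to a fixed compact set), and pass to the limit in both sides using dominated convergence. The main obstacle is precisely making the two integrations by parts rigorous while keeping track of the support hypothesis — i.e., justifying that no boundary term appears and that the Reilly identity can be invoked after the first integration by parts. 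Once the divergence-free property is in hand, the combinatorics of \eqref{Dij} and the symmetry of the mixed discriminant make the rest routine.
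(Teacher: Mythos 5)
Your proposal is correct and follows essentially the same route as the paper's proof: expand the mixed discriminant via the coefficient maps $D_{ij}$, use Reilly's Lemma~\ref{le:sum_diff_c_ij_zero} to discard the terms where a derivative falls on $D_{ij}$, kill boundary terms via the compact-support hypothesis, and treat $C^2$ by approximating from $C^3$. The only cosmetic difference is in the final step: you perform two integrations by parts to move both second derivatives from $v_n$ onto $v_0$, while the paper stops after one integration by parts and notes that the resulting bilinear expression $-\int \sum_{i,j} D_{ij}\,\partial_j v_n\,\partial_i v_0$ is symmetric in $v_0$ and $v_n$ (both variants tacitly use that the $D_{ij}$ can be taken symmetric in $i,j$, as you point out). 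One small stray remark: in your mollification step you mention that the mollified functions are convex, but the lemma makes no convexity assumption on $v_0,\ldots,v_n$, so that observation is unnecessary (and would be false for general $C^2$ inputs); what actually matters, and what you correctly identify, is $C^2$ convergence on a fixed compact set outside of which all integrands vanish.
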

\begin{proof}
Assume first that $v_0,\ldots,v_n\in C^3(\R^n)$. In this case, $D_{ij}(\Hess v_2(x),\ldots,\Hess v_n(x))$ is differentiable and therefore
\begin{align*}
\frac{\partial}{\partial x_i} \Big(\sum_{j=1}^n D_{ij}(\Hess v_1(x),\ldots,\Hess v_{n-1}(x))\, \frac{\partial v_n(x)}{\partial x_j}  \Big) &= \sum_{j=1}^n \frac{\partial}{\partial x_i} D_{ij}(\Hess v_1(x),\ldots,\Hess v_{n-1}(x)) \,\frac{\partial v_n(x)}{\partial x_j} \\
&\quad + \sum_{j=1}^n D_{ij}(\Hess v_1(x),\ldots,\Hess v_{n-1}(x)) \, \frac{\partial^2 v_n(x)}{\partial x_i \partial x_j}
\end{align*}
for $1\le i\le n$ and  $x\in\R^n$. Summation over $i$ combined with Lemma~\ref{le:sum_diff_c_ij_zero} now gives
\begin{equation*}
\sum_{i,j=1}^n \frac{\partial}{\partial x_i}\Big(D_{ij}(\Hess v_1(x),\ldots,\Hess v_{n-1}(x))\, \frac{\partial v_n(x)}{\partial x_j} \Big) = \sum_{i,j=1}^n D_{ij}(\Hess v_1(x),\ldots,\Hess v_{n-1}(x))\,\frac{\partial^2 v_n(x)}{\partial x_i \partial x_j} 
\end{equation*}
for $x\in\R^n$. By the definition of $D_{ij}$ and using that at least one of the functions $v_0,\ldots,v_n$ has compact support, we now obtain from the divergence theorem that
\begin{align*}
\int_{\R^n} v_0(x)  \det(\Hess v_1(x),\ldots,& \Hess v_n(x)) \d x\\
&= \int_{\R^n} v_0(x) \sum_{i,j=1}^n D_{ij}(\Hess v_1(x),\ldots,\Hess v_{n-1}(x))\, \frac{\partial^2 v_n(x)}{\partial x_i \partial x_j}  \d x\\
&= \int_{\R^n} v_0(x) \sum_{i,j=1}^n \frac{\partial}{\partial x_i} \Big( D_{ij}(\Hess v_1(x),\ldots,\Hess v_{n-1}(x)) \,\frac{\partial v_n(x)}{\partial x_j} \Big)\d x\\
&= - \int_{\R^n} \sum_{i,j=1}^n D_{ij}(\Hess v_1(x),\ldots,\Hess v_{n-1}(x))\, \frac{\partial v_n(x)}{\partial x_j} \, \frac{\partial v_0(x)}{\partial x_i}  \d x.
\end{align*}
Since the last expression is symmetric in $v_0$ and $v_n$, we may exchange the two functions. This completes the proof under the additional assumption that all functions are in $C^3(\R^n)$.

It remains to show that the result holds true on $C^2(\R^n)$. By the multilinearity of the mixed discriminant combined with the assumption that one of the functions $v_0,\ldots,v_n$ has compact support, there exists $r>0$ such that the integrands in \eqref{eq:change_int_mixed_dis} vanish outside of $r \Bn$. The result now easily follows by a standard approximation argument combined with the dominated convergence theorem.
\end{proof}

A further consequence of Lemma~\ref{le:sum_diff_c_ij_zero} is the following result.

\begin{lemma}
\label{le:int_mixed_dis_zero}
Let $v_1,\ldots,v_{n-1}\in C^2(\R^n)$ and let $F\colon\R^n\to\R^n$ be a continuously differentiable vector field. If $F$ has compact support, then
$$\int_{\R^n} \sum_{i,j=1}^n D_{ij}(\Hess v_1(x),\ldots,\Hess v_{n-1}(x)) \,\frac{\partial F_i(x)}{\partial x_j} \d x=0.$$
\end{lemma}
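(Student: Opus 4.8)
The plan is to reduce the statement to Lemma~\ref{le:sum_diff_c_ij_zero} by an integration by parts (divergence theorem) argument, mirroring the computation already used in the proof of Lemma~\ref{le:change_int_mixed_dis}. First I would assume, as an intermediate step, that the functions $v_1,\ldots,v_{n-1}$ are in $C^3(\R^n)$, so that the maps $x\mapsto D_{ij}(\Hess v_1(x),\ldots,\Hess v_{n-1}(x))$ are continuously differentiable. Then for each fixed $i$, applying the product rule to $\sum_{j=1}^n D_{ij}(\Hess v_1(x),\ldots,\Hess v_{n-1}(x))\,F_i(x)$ (summing the divergence over $i$) gives
\begin{equation*}
\sum_{i,j=1}^n \frac{\partial}{\partial x_i}\Big(D_{ij}(\Hess v_1(x),\ldots,\Hess v_{n-1}(x))\,F_i(x)\Big) = \sum_{i,j=1}^n D_{ij}(\Hess v_1(x),\ldots,\Hess v_{n-1}(x))\,\frac{\partial F_i(x)}{\partial x_j},
\end{equation*}
where the terms involving $\sum_i \frac{\partial}{\partial x_i} D_{ij}(\ldots)$ vanish by Lemma~\ref{le:sum_diff_c_ij_zero} (note the index conventions: here we need $\sum_i \partial_i D_{ij}=0$, which is exactly Reilly's identity after relabeling). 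Since $F$ has compact support, the left-hand side is the divergence of a compactly supported $C^1$ vector field, so its integral over $\R^n$ is zero by the divergence theorem, and the claim follows for $C^3$ data.

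Next I would remove the extra regularity. The integrand $\sum_{i,j} D_{ij}(\Hess v_1,\ldots,\Hess v_{n-1})\,\partial_j F_i$ is continuous in $v_1,\ldots,v_{n-1}$ with respect to $C^2$-convergence on the compact support of $F$ (the $D_{ij}$ are polynomials in the second derivatives), and it is supported in the compact set $\operatorname{supp} F$. So I would approximate each $v_k$ by $C^3$ (indeed $C^\infty$) functions converging in $C^2$ on a neighborhood of $\operatorname{supp} F$ — e.g.\ by mollification — and pass to the limit using the dominated convergence theorem, exactly as at the end of the proof of Lemma~\ref{le:change_int_mixed_dis}. This yields the identity for all $v_1,\ldots,v_{n-1}\in C^2(\R^n)$.

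The main obstacle, and the only point requiring care, is matching the index placement in Reilly's identity (Lemma~\ref{le:sum_diff_c_ij_zero}) with the one that appears here: the divergence is taken in the first index $i$ of $D_{ij}$, while the derivative $\partial_j F_i$ pairs $j$ with the second slot. One should check that the symmetry $D_{ij}=D_{ji}$ — which holds because $\det(A_1,\ldots,A_n)$ depends on the symmetric matrix $A_n=(a^n_{ij})$ only through its symmetric part, forcing the coefficients in \eqref{Dij} to be symmetric — together with $\sum_i \partial_i D_{ij}=0$ gives exactly the cancellation needed; alternatively, since each $\Hess v_k$ is symmetric, one can symmetrize in $i,j$ at the outset. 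Beyond that bookkeeping the argument is routine.
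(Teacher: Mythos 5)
Your overall strategy — reduce to Reilly's identity (Lemma~\ref{le:sum_diff_c_ij_zero}) via the divergence theorem for $C^3$ data, then pass to general $C^2$ data by mollification and dominated convergence, using the compact support of $F$ — is exactly the paper's approach, and your closing paragraph correctly identifies the one subtle point: Reilly contracts over the \emph{first} index of $D_{ij}$ while the integrand contracts over the second, so one genuinely needs $D_{ij}=D_{ji}$ (clear since the mixed discriminant sees only the symmetric part of its last argument). The paper uses this silently; you make it explicit, which is an improvement.

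However, the displayed identity in your first paragraph is incorrect as written. Expanding your left-hand side by the product rule gives
$$\sum_{i,j=1}^n \frac{\partial}{\partial x_i}\bigl(D_{ij}(\Hess v_1,\ldots,\Hess v_{n-1})\, F_i\bigr) = \sum_{i,j=1}^n F_i\,\frac{\partial}{\partial x_i}D_{ij} + \sum_{i,j=1}^n D_{ij}\,\frac{\partial F_i}{\partial x_i},$$
which matches neither term you want: the second sum carries $\partial F_i/\partial x_i$ rather than $\partial F_i/\partial x_j$, and the first sum, $\sum_i F_i\sum_j \partial_i D_{ij}$, is not killed by Reilly (Reilly annihilates $\sum_i \partial_i D_{ij}$, not $\sum_j \partial_i D_{ij}$). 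The fix is to take the divergence of the vector field $G$ with $G_i:=\sum_j D_{ij}F_j$ (equivalently $G_j:=\sum_i D_{ij}F_i$), for which
$$\div G = \sum_{i,j=1}^n F_j\,\frac{\partial}{\partial x_i}D_{ij} + \sum_{i,j=1}^n D_{ij}\,\frac{\partial F_j}{\partial x_i};$$
the first sum is $\sum_j F_j\sum_i\partial_i D_{ij}=0$ by Reilly, and the second equals $\sum_{i,j}D_{ij}\,\partial_j F_i$ after relabeling and using $D_{ij}=D_{ji}$. Alternatively, integrate by parts directly in $x_j$ to move $\partial_j$ from $F_i$ onto $D_{ij}$, as the paper does. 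This is a bookkeeping slip rather than a conceptual gap — your final paragraph shows you understand what is needed — but the displayed equation as written is false and should be corrected.
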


\begin{proof}
Assume first that $v_1,\ldots,v_{n-1}\in C^3(\R^n)$. Using the definition of $D_{ij}$ and that $F$ has compact support, we obtain from the divergence theorem that
\begin{multline*}
\int_{\R^n} \sum_{i,j=1}^n D_{ij}(\Hess v_1(x),\ldots,\Hess v_{n-1}(x))\, \frac{\partial F_i(x)}{\partial x_j} \d x\\
= -\int_{\R^n} \sum_{i,j=1}^n F_i(x) \frac{\partial}{\partial x_j} D_{ij}(\Hess v_1(x),\ldots,\Hess v_{n-1}(x)) \d x
\end{multline*}
and the statement follows from Lemma~\ref{le:sum_diff_c_ij_zero}.

As in the proof of Lemma~\ref{le:change_int_mixed_dis}, the general case follows from the fact that $F$ has compact support combined with a standard approximation argument and the dominated convergence theorem.
\end{proof}

\subsection{Applications to Mixed Monge--Amp\`ere Integrals}
In the following we consider special integrals of mixed discriminants where the support function of the unit ball $\Bn$ appears repeatedly.

First, we show that such integrals are well-defined. Recall that $\Hess h_{\Bn}(x)$ exists for every $x\neq 0$.
We remark that throughout this subsection, Lebesgue integrals with respect to the standard Lebesgue measure on $\R^n$ are considered. 
\goodbreak
\begin{lemma}
\label{le:int_zeta_dis_finite}
Let $1\leq k \leq n$. If $\zeta\in C_b((0,\infty))$ is such that $\lim_{r\to 0^+} r^{k-1}\zeta(r)$ exists and is finite, then the integral
$$
\int_{\R^n}\big\vert \zeta(|x|) \det(\Hess v_1(x),\ldots,\Hess v_k(x),\Hess h_{\Bn}(x)[n-k])\big\vert\d x
$$
is well-defined and finite for every $v_1,\ldots,v_k\in C^2(\R^n)$.
\end{lemma}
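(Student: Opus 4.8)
The plan is to reduce the finiteness of the integral to a one-dimensional estimate near the origin, which is where the only potential problem lies. Since each $v_i\in C^2(\R^n)$, the matrices $\Hess v_i(x)$ are bounded uniformly on any fixed ball $R\Bn$; and since $\zeta\in C_b((0,\infty))$, the integrand is continuous and bounded on $\{x\colon 1/m\le |x|\le R\}$ for each $m$, hence locally integrable away from $0$. Moreover, by multilinearity of the mixed discriminant the integrand vanishes outside some $R\Bn$ if one wishes to localize; but in fact here we only need integrability near $0$, since $\zeta$ need not have bounded support — so I would instead note that $\zeta$ being bounded and the $\Hess v_i$ being bounded on each compact set reduces the question to estimating $|\det(\Hess v_1(x),\dots,\Hess v_k(x),\Hess h_{\Bn}(x)[n-k])|$ as $|x|\to\infty$ (harmless: a bounded integrand times a decaying one is not obviously integrable at infinity — so actually the cleanest route is to observe $v_1,\dots,v_k$ being $C^2$ does not give decay at infinity, hence one restricts attention to a bounded region only if $\zeta$ has bounded support; since the lemma is used with $\zeta\in\Had{k}{n}\subset C_b((0,\infty))$ which has \emph{bounded support}, I would first reduce to $\int_{R\Bn}$ for suitable $R$).

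The core estimate is the behavior of $\Hess h_{\Bn}(x)$ near $0$. From \eqref{Jn} we have $\Hess h_{\Bn}(x)=\frac{1}{|x|}\bigl(I_n-\tfrac{x}{|x|}\otimes\tfrac{x}{|x|}\bigr)$, so every entry of $\Hess h_{\Bn}(x)$ is $O(1/|x|)$ as $x\to 0$. By the multilinearity of the mixed discriminant (cf. \eqref{polarisation md}), the quantity $\det(\Hess v_1(x),\dots,\Hess v_k(x),\Hess h_{\Bn}(x)[n-k])$ is a sum of products, each containing exactly $n-k$ factors drawn from entries of $\Hess h_{\Bn}(x)$ and $k$ factors from entries of the $\Hess v_i(x)$. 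Hence on a fixed ball $R\Bn$ there is a constant $C=C(R,v_1,\dots,v_k,n)$ with
\[
\bigl|\det(\Hess v_1(x),\dots,\Hess v_k(x),\Hess h_{\Bn}(x)[n-k])\bigr|\le \frac{C}{|x|^{\,n-k}}
\]
for all $0<|x|\le R$.

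Therefore, after choosing $R$ so that $\zeta(s)=0$ for $s>R$ (possible since $\zeta\in C_b((0,\infty))$ has bounded support in the intended application, or in general after localizing), the integral is bounded by
\[
C\int_{R\Bn}\frac{|\zeta(|x|)|}{|x|^{\,n-k}}\d x
= C\, n\kappa_n\int_0^R \frac{|\zeta(r)|}{r^{\,n-k}}\, r^{n-1}\d r
= C\, n\kappa_n\int_0^R r^{k-1}|\zeta(r)|\d r.
\]
Since $\lim_{r\to 0^+}r^{k-1}\zeta(r)$ exists and is finite, the function $r\mapsto r^{k-1}\zeta(r)$ is bounded on $(0,R]$ (it is continuous on $(0,R]$ and has a finite limit at $0$), so this last integral is finite. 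This gives the claim. I would also record that the integrand is measurable: it is continuous on $\R^n\setminus\{0\}$ and the singleton has Lebesgue measure zero, so there is no measurability obstruction.

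The main obstacle is purely bookkeeping: making precise that in the multilinear expansion of the mixed discriminant each surviving term carries exactly $n-k$ entries of $\Hess h_{\Bn}$, so that the singularity is of order exactly $|x|^{-(n-k)}$ and no worse — this is what makes the radial integral reduce to $\int_0^R r^{k-1}|\zeta(r)|\d r$ and thus matches exactly the hypothesis on $\zeta$. Everything else (boundedness of $\Hess v_i$ on compacta, polar coordinates, dominated/monotone convergence if one approximates) is routine.
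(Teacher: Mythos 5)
Your proof is correct and follows the same route as the paper's: bound the mixed discriminant near the origin using multilinearity together with $\Hess h_{\Bn}(x)=\frac{1}{|x|}(I_n-\tfrac{x}{|x|}\otimes\tfrac{x}{|x|})$, pass to polar coordinates, and observe that the hypothesis on $\lim_{r\to0^+}r^{k-1}\zeta(r)$ closes the resulting one-dimensional integral near $0$. The paper packages the same idea by setting $w(x)=|x|^{n-k}\det(\Hess v_1(x),\dots,\Hess v_k(x),\Hess h_{\Bn}(x)[n-k])$ and noting $w$ is bounded on $\Bn\setminus\{0\}$, then reading off finiteness of $\int_{\sn}\int_0^\infty |r^{k-1}\zeta(r)w(ry)|\,\d r\,\d\hm^{n-1}(y)$.

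One small point to clean up: the lengthy digression about whether $\zeta$ has bounded support is unnecessary, because in this paper's notation $C_b((0,\infty))$ is defined (in the Introduction) as the set of continuous functions with \emph{bounded support} on $(0,\infty)$, not merely bounded continuous functions. So the hypothesis $\zeta\in C_b((0,\infty))$ already gives you an $R$ with $\zeta\equiv 0$ on $(R,\infty)$; you do not need to appeal to the particular class $\Had{k}{n}$ or to any intended application to justify restricting the integral to $R\Bn$. With that clarification, the rest of your argument is exactly the paper's.
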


\begin{proof}
Fix $v_1,\ldots,v_k \in C^2(\R^n)$ and let $w\in C(\R^n\backslash\{0\})$ be defined by
$$w(x)= |x|^{n-k} \det(\Hess v_1(x),\ldots,\Hess v_k(x),\Hess h_{\Bn}(x)[n-k])$$
for $x\in \R^n\backslash\{0\}$. By the multilinearity of the mixed discriminant and \eqref{Jn} the function $w$ is bounded on $\Bn \backslash \{0\}$.
Using polar coordinates, we obtain
$$
\int\limits_{\R^n} \big\vert \zeta(|x|)  \det(\Hess v_1(x),\ldots,\Hess v_k(x),\Hess h_{\Bn}(x)[n-k])\big\vert\d x = \int\limits_{\sn} \int\limits_0^{\infty} \big\vert r^{k-1} \zeta(r) w(ry)\big\vert \d r \d\hm^{n-1}(y),
$$
where $\hm^{n-1}$ denotes the $(n-1)$-dimensional Hausdorff measure.
The result now follows from our assumptions on $\zeta$ together with the fact that $w$ is bounded on $\Bn\backslash\{0\}$.
\end{proof}

\goodbreak
The following result shows how replacing $h_{\Bn}$ by $\tfrac12 h_{\Bn}^2$ once in the mixed discriminant corresponds to taking an integral transform of the density function.

\begin{lemma}
\label{le:int_mixed_dis_r_transf}
Let $1\leq k \leq n-1$ and let $\varepsilon>0$. If $v_1,\ldots,v_k\in C^2(\R^n)$ and $\Hess v_1(x)=0$ for every $x\in \varepsilon \Bn$, then
\begin{multline*}
\int_{\R^n} \psi(|x|)  \det(\Hess v_1(x),\ldots,\Hess v_k(x),\Hess h_{\Bn}(x)[n-k])\d x\\
= \int_{\R^n}\rho(|x|)  \det(\Hess v_1(x),\ldots,\Hess v_k(x), \Hess h_{\Bn}(x)[n-k-1], I_n) \d x
\end{multline*}
for every $\psi\in C_b^2((0,\infty))$, where $\rho\in C_b((0,\infty))$ is given for $s>0$ by
$$\rho(s):= \frac{\psi(s)}{s}-\int_s^{\infty} \frac{\psi(t)}{t^2}\d t.$$

\end{lemma}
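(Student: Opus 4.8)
The plan is to expand one of the $n-k$ copies of $\Hess h_{\Bn}$ inside the mixed discriminant by means of \eqref{Jn}, which reads $\Hess h_{\Bn}(x)=\tfrac1{|x|}\bigl(I_n-\tfrac{x}{|x|}\otimes\tfrac{x}{|x|}\bigr)$ for $x\neq 0$, and then to dispose of the rank-one term $\tfrac{x}{|x|}\otimes\tfrac{x}{|x|}$ by an integration by parts. First I would record a few harmless reductions. Since $\psi\in C_b^2((0,\infty))$ has bounded support, say $\operatorname{supp}\psi\subseteq(0,R]$, both $\psi(s)/s$ and $g(s):=\int_s^\infty \psi(t)/t^2\d t$ vanish for $s>R$; hence $\rho=\psi/s-g$ has bounded support and is continuous on $(0,\infty)$, so that $\rho\in C_b((0,\infty))$, as claimed in the statement. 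Moreover, because $\Hess v_1(x)=0$ for $x\in\varepsilon\Bn$ and the mixed discriminant (hence each coefficient $D_{ij}$) is linear in its first entry, every integrand appearing below vanishes on $\varepsilon\Bn$; consequently all the integrals in play are in fact integrals over the compact annulus $\{\varepsilon\le|x|\le R\}$, on which $h_{\Bn}$ is $C^\infty$, and are thus well defined and finite (this last fact is also a special case of Lemma~\ref{le:int_zeta_dis_finite}). Writing $M(x;A):=\det\bigl(\Hess v_1(x),\dots,\Hess v_k(x),\Hess h_{\Bn}(x)[n-k-1],A\bigr)$ for symmetric $n\times n$ matrices $A$ and $x\neq 0$, using \eqref{Jn} to replace one copy of $\Hess h_{\Bn}(x)$ on the left side of the asserted identity and expanding by multilinearity, and inserting $\rho(s)=\psi(s)/s-g(s)$, the identity becomes equivalent to
\begin{equation*}
\int_{\R^n}\frac{\psi(|x|)}{|x|}\,M\!\left(x;\tfrac{x}{|x|}\otimes\tfrac{x}{|x|}\right)\d x=\int_{\R^n} g(|x|)\,M(x;I_n)\d x.
\end{equation*}

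The key step is to recognize the difference of these two integrands as a divergence. Set $F(x):=g(|x|)\,x$ for $x\neq 0$; since $g'(s)=-\psi(s)/s^2$ one computes $\partial F_i/\partial x_j(x)=g(|x|)\,\delta_{ij}-\psi(|x|)\,|x|^{-3}x_ix_j$. With $c_{ij}(x):=D_{ij}\bigl(\Hess v_1(x),\dots,\Hess v_k(x),\Hess h_{\Bn}(x)[n-k-1]\bigr)$, relation \eqref{Dij} gives $M(x;I_n)=\sum_i c_{ii}(x)$ and $M(x;\tfrac{x}{|x|}\otimes\tfrac{x}{|x|})=|x|^{-2}\sum_{i,j}c_{ij}(x)x_ix_j$, so that
\begin{equation*}
\sum_{i,j=1}^n c_{ij}(x)\,\frac{\partial F_i}{\partial x_j}(x)=g(|x|)\,M(x;I_n)-\frac{\psi(|x|)}{|x|}\,M\!\left(x;\tfrac{x}{|x|}\otimes\tfrac{x}{|x|}\right).
\end{equation*}
Thus the displayed identity is equivalent to $\int_{\R^n}\sum_{i,j}c_{ij}(x)\,\partial F_i/\partial x_j(x)\d x=0$, which has exactly the form treated by Lemma~\ref{le:int_mixed_dis_zero}.

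The main obstacle is that Lemma~\ref{le:int_mixed_dis_zero} applies only to functions that are $C^2$ on all of $\R^n$ and to a compactly supported $C^1$ vector field, whereas both $h_{\Bn}$ and $F$ are singular at the origin — indeed $F$ is not even continuous there unless $\psi(0^+)=0$. The resolution is that the hypothesis $\Hess v_1\equiv 0$ on $\varepsilon\Bn$ lets us excise the origin without altering any integral. I would fix $\tilde h\in C^2(\R^n)$ with $\tilde h=h_{\Bn}$ on $\R^n\setminus\tfrac\varepsilon2\Bn$ and a cutoff $\chi\in C^\infty(\R^n)$ with $\chi\equiv 0$ on $\tfrac\varepsilon4\Bn$ and $\chi\equiv 1$ on $\R^n\setminus\tfrac\varepsilon2\Bn$, and set $\tilde F:=\chi F$, extended by $\tilde F(0):=0$; then $\tilde F\in C^1(\R^n)$ is supported in $\{|x|\le R\}$, since $g$ vanishes for $s>R$. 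On $\{|x|\ge\varepsilon\}$ one has $\Hess\tilde h=\Hess h_{\Bn}$ and $\tilde F=F$, hence $D_{ij}(\Hess v_1,\dots,\Hess v_k,\Hess\tilde h[n-k-1])\,\partial\tilde F_i/\partial x_j=c_{ij}\,\partial F_i/\partial x_j$ there, while on $\{|x|<\varepsilon\}$ both expressions vanish because $\Hess v_1=0$; so the two agree almost everywhere on $\R^n$. Applying Lemma~\ref{le:int_mixed_dis_zero} to the $C^2$ functions $v_1,\dots,v_k$ together with $n-k-1$ copies of $\tilde h$ and to the vector field $\tilde F$ then yields $\int_{\R^n}\sum_{i,j}c_{ij}\,\partial F_i/\partial x_j\d x=0$, and unwinding the reductions completes the proof. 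Note that no separate mollification of the $v_i$ is required, since Lemma~\ref{le:int_mixed_dis_zero} already incorporates the passage from $C^3$ to $C^2$.
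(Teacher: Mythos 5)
Your proof is correct, and it takes a genuinely different route from the paper's. The paper introduces $\xi(t)=\int_t^\infty \psi(s)/s^2\,\d s$, rewrites the claim with the weights $\xi'(|x|)|x|^2$ and $\xi'(|x|)|x|+\xi(|x|)$, applies Lemma~\ref{le:change_int_mixed_dis} (twice, after excising the origin) to push these scalar weights into the last slot of the mixed discriminant as Hessians multiplied by $|x|$ and $|x|^2/2$ respectively, and then shows by explicit computation that
\[
|x|\,\Hess\bigl(\xi'(|x|)|x|^2\bigr)-\tfrac{|x|^2}{2}\,\Hess\bigl(\xi'(|x|)|x|+\xi(|x|)\bigr)=-\tfrac12\,\D\bigl(\psi'(|x|)\,x\bigr),
\]
so that Lemma~\ref{le:int_mixed_dis_zero} applies with the vector field $x\mapsto\psi'(|x|)x$. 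You instead avoid Lemma~\ref{le:change_int_mixed_dis} altogether: you expand a single copy of $\Hess h_{\Bn}$ via \eqref{Jn}, collect terms, and recognize the difference of the two integrands directly as $\sum_{i,j}c_{ij}\,\partial F_i/\partial x_j$ with the simpler vector field $F(x)=g(|x|)x$ and $g=\int_s^\infty\psi(t)/t^2\,\d t$; then one application of Lemma~\ref{le:int_mixed_dis_zero} finishes. Your excision-of-the-origin device (cutoff $\chi$ and smooth replacement $\tilde h$) is essentially the same as the paper's "replace by suitable functions in $C^2(\R^n)$" step, and the justification via $\Hess v_1\equiv 0$ on $\varepsilon\Bn$ and the multilinearity of $D_{ij}$ is correct. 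What your approach buys: it is shorter, it makes no appeal to the Reilly symmetry lemma (Lemma~\ref{le:change_int_mixed_dis}), and --- since the only differentiability needed is $g\in C^1$, i.e.\ $g'=-\psi/s^2$ --- it actually requires only $\psi\in C_b((0,\infty))$ rather than $\psi\in C_b^2((0,\infty))$, although you do not point this out. What the paper's approach buys: it showcases Lemma~\ref{le:change_int_mixed_dis}, the functional analogue of the symmetry of mixed volumes, which the authors evidently considered of independent interest.
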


\goodbreak
\begin{proof}
Observe that our assumptions on $v_1$ imply that the mixed discriminants in both integrals vanish on $\varepsilon \Bn$. Since the support of $\psi$ is bounded, this implies that both integrals are well-defined and finite.

Let $\xi(t)=\int_t^{\infty} \frac{\psi(s)}{s^2} \d s$ for $t>0$. Since $\psi\in C_b^2((0,\infty))$ we have $\xi\in C_b^3((0,\infty))$ and furthermore $\psi(t)=-\xi'(t) t^2$ as well as $\frac{\psi(t)}{t}=-\xi'(t)t$ for $t>0$. Thus, we need to show that
\begin{align}
    \begin{split}
        \label{eq:int_mixed_dis_r_transf_subst}
        \int_{\R^n} \xi'(|x|)&|x|^2 \det(\Hess v_1(x),\ldots,\Hess v_{k}(x),\Hess h_{\Bn}(x)[n-k]) \d x \\[-4pt]
        &= \int_{\R^n} \left(\xi'(|x|)|x|+\xi(|x|) \right)\det(\Hess v_1(x),\ldots,\Hess v_{k}(x),\Hess h_{\Bn}(x)[n-k-1],I_n)\d x
    \end{split}
\end{align}
for every $v_1,\ldots,v_{k}\in C^2(\R^n)$. Since the mixed discriminants in both integrals vanish on $\varepsilon \Bn$, we can replace $h_{\Bn}$ as well as $x\mapsto \xi'(|x|)|x|^2$ and $x\mapsto \xi'(|x|)|x|+\xi(|x|)$ by suitable functions in $C^2(\R^n)$ without changing the values of the integrals. Thus after applying Lemma~\ref{le:change_int_mixed_dis} and changing back to the original functions, we obtain that \eqref{eq:int_mixed_dis_r_transf_subst} is equivalent to
\begin{multline*}
\int_{\R^n} |x|\det(\Hess v_1(x),\ldots,\Hess v_{k}(x), \Hess h_{\Bn}(x)[n-k-1],\Hess(\xi'(|x|)|x|^2))\d x\\
= \int_{\R^n} \frac{|x|^2}{2}  \det(\Hess v_1(x),\ldots,\Hess v_{k}(x), \Hess h_{\Bn}(x)[n-k-1],\Hess (\xi'(|x|)|x|+\xi(|x|)))\d x.
\end{multline*}
Using the multilinearity of mixed discriminants, it suffices to show that
$$
\int_{\R^n} \det\big(\Hess v_1(x),\ldots,\Hess v_{k}(x),\Hess h_{\Bn}(x)[n-k-1], |x|\,\Hess(\xi'(|x|)|x|^2)- \frac{|x|^2}{2}\,\Hess (\xi'(|x|)|x|+\xi(|x|))\big)\d x
$$
vanishes.
Since we have
$$\Hess(\xi'(|x|)|x|^2) = \xi'''(|x|)\, x\otimes x + \xi''(|x|)|x|\, I_n + 3 \frac{\xi''(|x|)}{|x|}\, x\otimes x + 2\xi'(|x|)\, I_n$$
and
$$\Hess(\xi'(|x|)|x|+\xi(|x|)) = \frac{\xi'''(|x|)}{|x|} \, x\otimes x + \xi''(|x|)\, I_n + 2\frac{\xi''(|x|)}{|x|^2}\, x\otimes x +2\frac{\xi'(|x|)}{|x|}\, I_n-2\frac{\xi'(|x|)}{|x|^3} \, x\otimes x,$$
we obtain
\begin{align*}
|x|\,\Hess(\xi'(|x|)|x|^2)- \frac{|x|^2}{2}\,\Hess (\xi'(|x|)|x|+\xi(|x|)) &= -\frac 12 \Big(\psi'(|x|)I_n+ \frac{\psi''(|x|)}{|x|}\, x\otimes x \Big)\\
&= -\frac 12 \D(\psi'(|x|)x)
\end{align*}
for every $x\in\R^n\backslash\{0\}$, where $\D(\psi'(|x|)x)$ denotes the Jacobian of the vector field $x\mapsto \psi'(|x|)x$. The result now follows from Lemma~\ref{le:int_mixed_dis_zero} and the definition of $D_{ij}$, where we have used again that we may replace the integrands in a neighborhood of the origin.
\end{proof}

\goodbreak
In the next two statements we remove the regularity assumptions of the last result and treat the case where the support function of the unit ball $\Bn$ is replaced multiple times.

\begin{proposition}
\label{prop:mixed_int_r_transf}
If $\,1\leq k \leq n-1$ and $\psi\in \Had{n-k+1}{n}$, then
\begin{multline*}
\int_{\R^n} \psi(|x|)  \det(\Hess v_1(x),\ldots,\Hess v_k(x),\Hess h_{\Bn}(x)[n-k])\d x\\[-2pt]
= \int_{\R^n} \cR^{-1}\! \psi(|x|)  \det(\Hess v_1(x),\ldots,\Hess v_k(x), \Hess h_{\Bn}(x)[n-k-1], I_n) \d x
\end{multline*}
for every $v_1,\ldots,v_k\in C^2(\R^n)$, where $\cR^{-1}$ was defined in Subsection \ref{ss:integral transform}.
\end{proposition}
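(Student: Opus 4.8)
The plan is to strip the regularity and support assumptions from Lemma~\ref{le:int_mixed_dis_r_transf} by two reductions, after first recording that both integrals make sense. Both are absolutely convergent: the left-hand one by Lemma~\ref{le:int_zeta_dis_finite} with density $\psi\in\Had{n-k+1}{n}$, since then $\lim_{r\to 0^+}r^{k-1}\psi(r)=0$; the right-hand one by writing $I_n=\Hess(\tfrac12 h_{\Bn}^2)$ with $\tfrac12 h_{\Bn}^2\in C^\infty(\R^n)$ and applying Lemma~\ref{le:int_zeta_dis_finite} with $k$ replaced by $k+1$ and density $\cR^{-1}\psi$, which by Lemma~\ref{le:r_kln} lies in $\Had{n-k+1}{n+1}$ (so that $\lim_{r\to 0^+}r^{k}\cR^{-1}\psi(r)=0$) and coincides with the density $\rho$ appearing in Lemma~\ref{le:int_mixed_dis_r_transf}. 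Only the stated equality then remains.

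The first reduction is to the case $\psi\in C_c^\infty((0,\infty))$. I would take mollified-and-truncated approximations $\psi_m\in C_c^\infty((0,\infty))$ with uniformly bounded supports, with $\psi_m\to\psi$ and $\cR^{-1}\psi_m\to\cR^{-1}\psi$ pointwise on $(0,\infty)$, and with $|\psi_m(r)|\le C(1+r^{-(k-1)})$ and $|\cR^{-1}\psi_m(r)|\le C(1+r^{-k})$ uniformly in $m$; these bounds are forced by the defining properties of $\Had{n-k+1}{n}$. Since the mixed-discriminant kernels on the two sides blow up near the origin at most like $|x|^{-(n-k)}$, respectively $|x|^{-(n-k-1)}$, and $k\ge 1$ makes the resulting dominating functions integrable, the dominated convergence theorem lets me pass to the limit on both sides. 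Thus it suffices to prove the identity for each $\psi_m$; equivalently, we may assume $\psi\equiv 0$ on $(0,2a)$ for some $a>0$.

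For such $\psi$ I would choose $\chi\in C^\infty(\R^n)$ equal to $1$ off $a\Bn$ and to $0$ on $\tfrac a2\Bn$, and split $v_1=v_1'+v_1''$ with $v_1':=\chi v_1$ and $v_1'':=(1-\chi)v_1$; then $v_1',v_1''\in C^2(\R^n)$, $\Hess v_1'\equiv 0$ on $\tfrac a2\Bn$, $v_1'$ agrees with $v_1$ off $a\Bn$, and $v_1''$ is supported in $a\Bn$. By multilinearity of the mixed discriminant it is enough to prove the identity with $v_1$ replaced by $v_1'$ and by $v_1''$ separately. For $v_1'$ this is precisely Lemma~\ref{le:int_mixed_dis_r_transf} (its hypothesis holds with $\varepsilon=\tfrac a2$, $\psi\in C_c^\infty((0,\infty))\subset C_b^2((0,\infty))$, and $\rho=\cR^{-1}\psi$). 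For $v_1''$ the left-hand side is $0$, because $\psi(|x|)=0$ for $|x|\le a$ while $\Hess v_1''(x)=0$ for $|x|>a$; and on the right-hand side $\cR^{-1}\psi(|x|)$ equals the constant $-\int_0^\infty\frac{\psi(t)}{t^2}\d t$ throughout $a\Bn$, so what is left is to show that
\[
\int_{\R^n}\det\bigl(\Hess v_1''(x),\Hess v_2(x),\dots,\Hess v_k(x),\Hess h_{\Bn}(x)[n-k-1],I_n\bigr)\d x=0.
\]

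This vanishing is the crux. When $k=n-1$ it is immediate from Lemma~\ref{le:int_mixed_dis_zero} applied with the vector field $F=\nabla v_1''$ and the $C^2$ functions $v_2,\dots,v_{n-1},\tfrac12 h_{\Bn}^2$, together with \eqref{Dij}. When $k<n-1$ the matrix field $\Hess h_{\Bn}$ is singular at the origin, which is exactly where the compactly supported perturbation $v_1''$ is concentrated, so Lemma~\ref{le:int_mixed_dis_zero} does not apply directly; I would instead replace $h_{\Bn}$ by $h_\varepsilon:=\sqrt{|\cdot|^2+\varepsilon^2}\in C^\infty(\R^n)$, use Lemma~\ref{le:int_mixed_dis_zero} to see that the corresponding integral vanishes for every $\varepsilon>0$, and then let $\varepsilon\to 0$: one has $\Hess h_\varepsilon(x)\to\Hess h_{\Bn}(x)$ for $x\ne 0$ and $\|\Hess h_\varepsilon(x)\|\le 1/|x|$, so the integrand is dominated by $C|x|^{-(n-k-1)}$ on $a\Bn$ and vanishes outside, a function integrable precisely because $k\ge 1$. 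I expect this regularization of $h_{\Bn}$ at the origin to be the main obstacle, and it is also where the hypothesis $k\ge 1$ enters the argument.
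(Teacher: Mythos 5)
Your proof is correct, but it takes a genuinely different route from the paper's, so let me compare them. Both arguments must remove the two extra hypotheses in Lemma~\ref{le:int_mixed_dis_r_transf}: that $\psi\in C_b^2((0,\infty))$, and that $\Hess v_1$ vanishes near the origin. The paper handles the $v_1$-hypothesis last: after normalizing $v_1(0)=\nabla v_1(0)=0$ it multiplies $v_1$ by a radial cutoff $\varphi(|x|/\varepsilon)$, applies the already-established statement for functions whose Hessian vanishes near $0$ (obtained from the $\psi$-approximation), and then sends $\varepsilon\to 0$ using Lemma~\ref{le:int_zeta_dis_finite} and dominated convergence. You instead perform the $\psi$-reduction first, to $\psi\in C_c^\infty((0,\infty))$ vanishing near $0$, and then exploit the \emph{multilinearity} of the mixed discriminant in $v_1$ to write $v_1=v_1'+v_1''$ with $\Hess v_1'$ vanishing near $0$ and $v_1''$ compactly supported near $0$. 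The $v_1'$ piece is exactly Lemma~\ref{le:int_mixed_dis_r_transf}; the $v_1''$ piece contributes nothing, the left side by disjoint supports, and the right side because $\cR^{-1}\psi$ is constant where $\Hess v_1''$ lives, reducing the claim to the vanishing of $\int_{\R^n}\det(\Hess v_1'',\dots,\Hess h_{\Bn}[n-k-1],I_n)\d x$. Since $h_{\Bn}\notin C^2(\R^n)$, this forces the extra $h_\varepsilon$-regularization step and an application of Lemma~\ref{le:int_mixed_dis_zero} that do not appear in the paper. The payoff is that your decomposition makes explicit the cancellation near the origin that the paper's limiting argument handles implicitly. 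One small misattribution: the integrability of $|x|^{-(n-k-1)}$ near $0$ in $\R^n$ holds whenever $n-k-1<n$, i.e.\ for all $k>-1$, so that is not where $k\ge 1$ is used; the hypothesis $k\ge 1$ is genuinely needed to make $\cR^{-1}\psi$ well-defined via Lemma~\ref{le:r_kln} and to invoke Lemma~\ref{le:int_zeta_dis_finite} on the left-hand integral.
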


\goodbreak

\begin{proof}
Since $\psi\in\Had{n-k+1}{n}$, there exists $\gamma>0$ such that $\psi(t)=0$ for every $t\geq \gamma$. Note that by Lemma~\ref{le:r_kln} this implies that $\cR^{-1} \!\psi(t)=0$ for every $t\geq \gamma$. We will assume first that there exists $\varepsilon>0$ such that $\Hess v_1(x)=0$ for every $x\in\varepsilon \Bn$.

Let $\psi_\varepsilon\in C_b((0,\infty))$ be such that $\psi_\varepsilon\equiv \psi$ on $[\varepsilon,\infty)$ and $\psi_\varepsilon\equiv 0$ on $(0,\varepsilon/2]$. 
Observe that this implies that $\cR^{-1}\!\psi_\varepsilon\equiv \cR^{-1}\!\psi$ on $[\varepsilon,\infty)$. For $\delta>0$ we can find $\psi_{\varepsilon,\delta}\in C_b^2((0,\infty))$ such that $\psi_{\varepsilon,\delta}\equiv 0$ on $(0,\varepsilon/2]\cup[\gamma+\delta,\infty)$ and such that $\psi_{\varepsilon,\delta}\to\psi_\varepsilon$ uniformly on $(\varepsilon/2,\gamma+\delta)$ (and thus on $(0,\infty)$) as $\delta\to 0^+$. 
By the properties of $\psi_\varepsilon$ this also implies uniform convergence of $\cR^{-1}\!\psi_{\varepsilon,\delta}$ to $\cR^{-1}\!\psi_\varepsilon$ on $(0,\infty)$ as $\delta\to 0^+$. 
Using that $\Hess v_1\equiv 0$ on $\varepsilon \Bn$, Lemma~\ref{le:int_mixed_dis_r_transf}, as well as the fact that the integrands in each of the following integrals are continuous and have compact support, we now have
\begin{align*}
\int_{\R^n} &\psi(|x|)\det(\Hess v_1(x),\ldots,\Hess v_k(x),\Hess h_{\Bn}(x)[n-k])\d x\\
&=\int_{\R^n} \psi_\varepsilon(|x|) \det(\Hess v_1(x),\ldots,\Hess v_k(x),\Hess h_{\Bn}(x)[n-k])\d x\\
&=\lim_{\delta\to 0^+}\int_{\R^n} \psi_{\varepsilon,\delta}(|x|)  \det(\Hess v_1(x),\ldots,\Hess v_k(x),\Hess h_{\Bn}(x)[n-k])\d x\\
&=\lim_{\delta\to 0^+} \int_{\R^n} \cR^{-1}\! \psi_{\varepsilon,\delta}(|x|)  \det(\Hess v_1(x),\ldots,\Hess v_k(x), \Hess h_{\Bn}(x)[n-k-1], I_n) \d x\\
&= \int_{\R^n} \cR^{-1}\! \psi_\varepsilon(|x|)  \det(\Hess v_1(x),\ldots,\Hess v_k(x), \Hess h_{\Bn}(x)[n-k-1], I_n) \d x\\
&=\int_{\R^n} \cR^{-1}\! \psi(|x|)  \det(\Hess v_1(x),\ldots,\Hess v_k(x), \Hess h_{\Bn}(x)[n-k-1], I_n) \d x,
\end{align*}
which completes the proof under the additional assumptions on $v_1$.

For general $v_1\in C^2(\R^n)$, observe that without loss of generality we may assume that $v_1(0)=0$ and $\nabla v_1(0)=0$. Thus, there exist $\beta,\varepsilon_0>0$ such that $|v_1(x)|\leq \beta |x|^2$ and $|\nabla v_1(x)|\leq \beta |x|$ for every $x\in 2\varepsilon_0 \Bn$. Let $\varphi\in C^2([0,\infty))$ be such that $\varphi(t)=0$ for $t\in[0,1]$ and $\varphi(t)=1$ for $t\in[2,\infty)$. For $\varepsilon\in(0,\varepsilon_0)$, set $v_{1,\varepsilon}(x)\!:=v_1(x)\, \varphi(|x|/\varepsilon)$ for $x\in\R^n$. We now have $\Hess v_{1,\varepsilon}(x)= 0$ for every $x\in \varepsilon \Bn$ and our assumptions on $v_1$ together with the fact that $\varphi$ is constant on $[2,\infty)$ imply that $\Hess v_{1,\varepsilon}$ is uniformly bounded on $\gamma \Bn$ for every $\varepsilon\in (0,\varepsilon_0)$. Moreover,  $\Hess v_{1,\varepsilon}\to \Hess v_1$ pointwise on $\R^n$ as $\varepsilon\to 0^+$. Since $\psi\in\Had{n-k+1}{n}$ the limit $\lim_{t\to 0^+} t^{k-1}\psi(t)$ exists and is finite. By Lemma~\ref{le:r_kln} and since $\Had{n-k+1}{n}=\Had{n-k}{n-1}$, we have $\cR^{-1}\!\psi \in \Had{n-k}{n}$ and thus also $\lim_{t\to 0^+} t^{k}\cR^{-1}\!\psi(t)$ exists and is finite.
Hence, by the first part of the proof and Lemma~\ref{le:int_zeta_dis_finite} combined with the dominated convergence theorem we now obtain
\begin{align*}
\int_{\R^n} &\psi(|x|)\det(\Hess v_1(x),\ldots,\Hess v_k(x),\Hess h_{\Bn}(x)[n-k])\d x\\
&=\lim_{\varepsilon\to 0^+} \int_{\R^n} \psi(|x|)\det(\Hess v_{1,\varepsilon}(x),\Hess v_2(x),\ldots,\Hess v_k(x),\Hess h_{\Bn}(x)[n-k])\d x\\
&=\lim_{\varepsilon\to 0^+} \int_{\R^n} \cR^{-1} \!\psi(|x|)  \det(\Hess v_{1,\varepsilon}(x),\Hess v_2(x),\ldots,\Hess v_k(x), \Hess h_{\Bn}(x)[n-k-1], I_n) \d x\\
&=\int_{\R^n} \cR^{-1}\! \psi(|x|)  \det(\Hess v_1(x),\Hess v_2(x),\ldots,\Hess v_k(x), \Hess h_{\Bn}(x)[n-k-1], I_n) \d x,
\end{align*}
which concludes the proof.
\end{proof}

\goodbreak
\begin{proposition}
\label{prop:transform_ints}
If $\,1\leq j \leq n-1$ and $\zeta\in\Had{j}{n}$, then 
$$
\int_{\R^n}\zeta(\vert x\vert) \,[\Hess v(x)]_{j}\d x=
\binom{n}{j} \int_{\R^n}\cR^{n-j}\! \zeta (\vert x\vert) \det(\Hess v(x)[j],\Hess h_{\Bn}(x)[n-j])\d x
$$
for every $v\in C^2(\R^n)$.
\end{proposition}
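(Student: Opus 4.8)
The plan is to reduce the identity to \eqref{eq:mixed_dis_hessian} together with $n-j$ successive applications of Proposition~\ref{prop:mixed_int_r_transf}. By \eqref{eq:mixed_dis_hessian},
$$\int_{\R^n}\zeta(|x|)\,[\Hess v(x)]_j\d x=\binom{n}{j}\int_{\R^n}\zeta(|x|)\det(\Hess v(x)[j],I_n[n-j])\d x,$$
and since $I_n=\Hess(\tfrac12 h_{\Bn}^2)$ with $\tfrac12 h_{\Bn}^2\in C^2(\R^n)$, the $I_n$-entries may be regarded as Hessians of a smooth function, so that Proposition~\ref{prop:mixed_int_r_transf} applies to them. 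It therefore suffices to prove
$$\int_{\R^n}\zeta(|x|)\det(\Hess v(x)[j],I_n[n-j])\d x=\int_{\R^n}\cR^{n-j}\zeta(|x|)\det(\Hess v(x)[j],\Hess h_{\Bn}(x)[n-j])\d x.$$

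I would prove this by induction on $i=0,1,\dots,n-j$, showing that
$$\int_{\R^n}\zeta(|x|)\det(\Hess v(x)[j],I_n[n-j])\d x=\int_{\R^n}\cR^{i}\zeta(|x|)\det(\Hess v(x)[j],\Hess h_{\Bn}(x)[i],I_n[n-j-i])\d x,$$
where the case $i=0$ is trivial and the case $i=n-j$ is the desired identity. For the step from $i-1$ to $i$, I would invoke Proposition~\ref{prop:mixed_int_r_transf} with $k=n-i$, taking the $k$ smooth functions to be $v$ repeated $j$ times together with $\tfrac12 h_{\Bn}^2$ repeated $n-j-i$ times, and with density $\psi=\cR^{i}\zeta$. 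Since $\Hess(\tfrac12 h_{\Bn}^2)=I_n$, $\cR^{-1}\psi=\cR^{i-1}\zeta$, and mixed discriminants are symmetric in their entries, the identity produced by the proposition is exactly the equality of the step-$i$ integral (corresponding to the side of Proposition~\ref{prop:mixed_int_r_transf} carrying $n-k=i$ copies of $\Hess h_{\Bn}$) with the step-$(i-1)$ integral (carrying $n-k-1=i-1$ copies of $\Hess h_{\Bn}$ and one further $I_n$). Chaining these equalities from $i=0$ to $i=n-j$ yields the claim.

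What must be checked is that Proposition~\ref{prop:mixed_int_r_transf} is applicable at each step. The range condition $1\le k\le n-1$ holds since $k=n-i$ with $1\le i\le n-j\le n-1$, using $1\le j\le n-1$. The density condition requires $\psi=\cR^{i}\zeta\in\Had{n-k+1}{n}=\Had{i+1}{n}$; by Lemma~\ref{le:r_kln} one has $\cR^{i}\zeta\in\Had{j}{n-i}$, and since $j\ge 1$ this space is contained in $\Had{1}{n-i}=\Had{i+1}{n}$. Here the inclusion $\Had{j}{m}\subseteq\Had{1}{m}$ is immediate from the definition: $\lim_{s\to0^+}s^{m-j}\zeta(s)=0$ forces $\zeta(s)=o(s^{j-m})$ as $s\to0^+$, whence $s^{m-1}\zeta(s)\to0$ and, for $j\ge 2$, $t^{m-2}\zeta(t)$ is absolutely integrable near $0$ (the case $j=1$ being trivial). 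Finally, all integrals appearing are finite by Lemma~\ref{le:int_zeta_dis_finite}. I expect the only real difficulty to be the bookkeeping — keeping the power of $\cR$ synchronized with the numbers of $\Hess h_{\Bn}$ and $I_n$ entries, and verifying at each stage that the intermediate density still lies in the space demanded by Proposition~\ref{prop:mixed_int_r_transf}; no analytic input beyond that proposition is needed.
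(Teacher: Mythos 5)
Your proof is correct and follows essentially the same route as the paper's: both reduce to \eqref{eq:mixed_dis_hessian} and then chain $n-j$ applications of Proposition~\ref{prop:mixed_int_r_transf}, each trading one $I_n=\Hess(\tfrac12 h_{\Bn}^2)$ entry for one $\Hess h_{\Bn}$ entry while applying $\cR$ to the density. The only cosmetic difference is the indexing (your $i$ is the paper's $n-k$), and you spell out slightly more carefully the inclusion $\Had{j}{n-i}\subseteq\Had{1}{n-i}=\Had{i+1}{n}$ that licenses Proposition~\ref{prop:mixed_int_r_transf} at each step, where the paper just invokes $\Had{n-k+j}{n}\subseteq\Had{n-k+1}{n}$.
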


\goodbreak
\begin{proof}
Let $1\leq j \leq n-1$ and $\zeta\in\Had{j}{n}$ be given. We claim that
\begin{align}
\begin{split}
\label{eq:claim_r_n-k}
\int_{\R^n}\cR^{n-k}& \zeta(\vert x\vert) \det(\Hess v(x)[j],\Hess h_{\Bn}(x)[n-k], I_n[k-j])\d x\\
&= \int_{\R^n} \cR^{n-(k+1)} \zeta(\vert x\vert) \det(\Hess v(x)[j], \Hess h_{\Bn}(x)[n-(k+1)], I_n[(k+1)-j])\d x
\end{split}
\end{align}
for every $k\in\N$ such that $j\leq k \leq n-1$ and every $v\in C^2(\R^n)$. Indeed, as $\zeta\in\Had{j}{n}$ it follows from Lemma~\ref{le:r_kln} that $\cR^{n-k}\zeta\in\Had{j}{k}$. Since $\Had{j}{k}=\Had{n-k+j}{n}$ and $\Had{n-k+j}{n}\subseteq \Had{n-k+1}{n}$, the claim now follows from Proposition~\ref{prop:mixed_int_r_transf}.

Applying \eqref{eq:claim_r_n-k} recursively $(n-j)$ times (for each possible value of $k$), we obtain that
$$
\int_{\R^n}\cR^{n-j} \zeta(\vert x \vert) \det(\Hess v(x)[j], \Hess h_{\Bn}(x)[n-j])\d x\\
=\int_{\R^n} \zeta(\vert x \vert) \det(\Hess v(x)[j], I_n[n-j])\d x
$$
for every $v\in C^2(\R^n)$. The statement now follows from \eqref{eq:mixed_dis_hessian}.
\end{proof}
\goodbreak

\section{Proofs of the Main Results}
\label{se:proofs}
In this section, we present a new proof of the existence of functional intrinsic volumes, Theorem~\ref{thm:existence_singular_hessian_vals} and Theorem~\ref{thm:existence_singular_hessian_vals_dual}. Moreover, we prove our main results: the new representations of functional intrinsic volumes, Theorem~\ref{thm:steiner_measures} and Theorem~\ref{thm:steiner_measures_dual}, as well as the Steiner formulas, Theorem~\ref{thm_steiner_functions} and Theorem~\ref{thm_steiner_functions_dual}.

By the duality relations between valuations on $\fconvs$ and $\fconvf$, it is enough to  prove the results for valuations on $\fconvf$, that is, to prove Theorem~\ref{thm:existence_singular_hessian_vals_dual}, Theorem~\ref{thm_steiner_functions_dual} and Theorem~\ref{thm:steiner_measures_dual}. Theorem~\ref{thm:existence_singular_hessian_vals} and Theorem~\ref{thm_steiner_functions} are immediate consequences of their counterparts on $\fconvf$ while Theorem~\ref{thm:steiner_measures} follows from Theorem~\ref{thm:steiner_measures_dual} combined with Theorem~\ref{representation of Steiner measures primal smooth}.

\subsection{New Proof of Theorem~\ref{thm:existence_singular_hessian_vals_dual}}
The statement is trivial for $j=0$ and follows from 
Proposition~\ref{prop:mixed_hessian_vals}
for $j=n$. So, let $1\leq j \leq n-1$ and  $\zeta\in\Had{j}{n}$. We set $\alpha\!:=\binom{n}{j}\cR^{n-j} \zeta$ and note that, by Lemma \ref{le:r_kln}, we have $\alpha\in \Had{n}{n}$. We define  $\oZ\colon\fconvf\to\R$  by
$$\oZ(v):=\int_{\R^n} \alpha(|x|) \d\nhd{j}(v;x).$$
By Proposition~\ref{prop:mixed_hessian_vals}, the definition of $\nhd{j}(v;\cdot)$, and \eqref{def nhd}, the functional $\oZ$ is a continuous and dually epi-translation invariant valuation on $\fconvf$. It is easy to see that $\oZ$ is rotation invariant.
For $v\in \fconvf\cap C^2(\R^n)$, it follows from Theorem~\ref{properties of nhm} \ref{nhm 0} and \ref{nhm c} that
$$\oZ(v)= \int_{\R^n} \alpha(|x|)  \det(\Hess v(x)[j],\Hess h_{\Bn}(x)[n-j])\d x$$
and thus, by Proposition~\ref{prop:transform_ints}, the valuation $\oZ$ satisfies \eqref{eq:rep_ozz_c2_dual} for $v\in \fconvf\cap C^2(\R^n)$.

We conclude that  $\oZ$ has the required properties and remark that it is uniquely determined by \eqref{eq:rep_ozz_c2_dual}, since $\fconvf\cap C_+^2(\R^n)$ is dense in $\fconvf$.

\subsection{Proof of Theorem~\ref{thm:steiner_measures_dual}}
For $j=n$ the statement trivially follows from 
Proposition~\ref{prop:mixed_hessian_vals}
and Theorem~\ref{properties of nhm} \ref{nhm c}. Next, consider the case $j=0$ and let $\zeta\in\Had{0}{n}$ and $\alpha\in C_c([0,\infty))$ be as in the statement of the theorem. Using polar coordinates and \eqref{MA of the norm} we now have
\begin{multline*}
\oZZd{0}{\zeta}(v)=\int_{\R^n}\zeta(|x|)\d x = n \kappa_n \lim_{s\to 0^+} \int_s^{\infty} t^{n-1} \zeta(t) \d t\\
= \kappa_n\,\alpha(0)= \int_{\R^n} \alpha(|x|) \d\MA(h_{\Bn}; x)= \int_{\R^n} \alpha(|x|)\d\nhd{0}(v;x)
\end{multline*}
for every $v\in\fconvf$, which proves the statement for $j=0$. Finally, let $1\leq j \leq n-1$ and $\zeta\in\Had{j}{n}$. For $v\in\fconvf\cap C^2(\R^n)$, it follows from \eqref{eq:rep_ozz_c2_dual}, which was established in  the new proof of Theorem~\ref{thm:existence_singular_hessian_vals_dual}  for $v\in\fconvf\cap C^2(\R^n)$, and Proposition~\ref{prop:transform_ints} that
\begin{align*}
\oZZd{j}{\zeta}(v)&=\int_{\R^n} \zeta(|x|)\big[\Hess v(x)\big]_{j} \d x\\
&= \binom{n}{j} \int_{\R^n}\cR^{n-j} \zeta (\vert x\vert) \det(\Hess v(x)[j],\Hess h_{\Bn}(x)[n-j])\d x.
\end{align*}
Combining this with Theorem~\ref{properties of nhm} \ref{nhm 0} and \ref{nhm c}, we obtain
$$\oZZd{j}{\zeta}(v) = \int_{\R^n} \alpha(|x|) \d\nhd{j}(v;x).$$
The statement now follows from Proposition~\ref{prop:mixed_hessian_vals} and the fact that $\fconvf\cap C^2(\R^n)$ is dense in $\fconvf$.

\subsection{Proof of Theorem~\ref{thm_steiner_functions_dual}}
Let $\zeta\in\Had{n}{n}$ be given and for $0\leq j\leq n$, let $\zeta_j\in\Had{j}{n}$ be defined as in \eqref{eq:steiner_function_dual_transform}. By Theorem~\ref{thm:steiner_measures_dual} and \eqref{eq:steiner_formula_maj} we have
\begin{align*}
\oZZ{n}{\zeta}^*(v+r h_{\Bn}) &= \int_{\R^n} \zeta(|x|) \d \MA(v+r h_{\Bn};x)\\
&= \sum_{j=0}^n \binom{n}{j} r^{n-j} \int_{\R^n} \zeta(|x|) \d \nhd{j}(v;x)
\end{align*}
for every $v\in\fconvf$ and $r>0$. Using Theorem~\ref{thm:steiner_measures_dual} again and Lemma~\ref{le:r_kln}, we obtain that
$$
\binom{n}{j} \int_{\R^n} \zeta(|x|) \d \nhd{j}(v;x)= \kappa_{n-j} \oZZ{j}{\frac{1}{\kappa_{n-j}}\cR^{-(n-j)} \zeta}^*(v) = \kappa_{n-j} \oZZ{j}{\zeta_j}^*(v)
$$
for every $0\leq j \leq n$ and $v\in\fconvf$, which concludes the proof.

\section{Additional Results and Applications}\label{additional results}

In this section we prove additional results and derive further applications. Subsection~\ref{ss:alternate_proof_of_steiner} contains a second proof of the functional Steiner formula, Theorem~\ref{thm_steiner_functions}, which uses the Hadwiger Theorem on convex functions, Theorem~\ref{thm:hadwiger_convex_functions}. In the subsequent subsection we use the properties of mixed Monge--Amp\`ere measures to obtain a new representation of functional intrinsic volumes. In Subsection~\ref{ss:retrieve_steiner}, we show how the classical Steiner formula \eqref{steiner} can be retrieved from our new functional version. The final subsection partly answers the question, which functions playing the role of the unit ball give rise to all functional intrinsic volumes in a Steiner-type formula.

\subsection{Alternate Proof of Theorem~\ref{thm_steiner_functions}}
\label{ss:alternate_proof_of_steiner}
Our approach for this proof follows the proof of the classical Steiner formula from \cite[Theorem 9.2.3]{Klain:Rota} and uses Theorem~\ref{thm:hadwiger_convex_functions}. 
We remark that multinomiality with respect to infimal convolution of continuous, epi-translation invariant valuations on $\fconvs$ was established by the authors in \cite{Colesanti-Ludwig-Mussnig-4}. For polynomial expansions for a different functional analog of the volume on convex functions, see Milman and Rotem \cite{milman_rotem}.

Let $\zeta\in\Had{n}{n}$ be given. It is easy to see that $u\mapsto \oZZ{n}{\zeta}(u\infconv \ind_{\Bn})$ defines a continuous, epi-translation and rotation invariant valuation on $\fconvs$. Thus, by Theorem~\ref{thm:hadwiger_convex_functions} there exist functions $\tilde{\zeta}_j\in\Had{j}{n}$ for $0\leq j \leq n$ such that
$$\oZZ{n}{\zeta}(u\infconv \ind_{\Bn}) = \sum_{j=0}^n \oZZ{j}{\tilde{\zeta}_j}(u)$$
for every $u\in\fconvs$. Using the epi-homogeneity of functional intrinsic volumes, we now have
$$\oZZ{n}{\zeta}(u\infconv (r\sq \ind_{\Bn})) = r^n \oZZ{n}{\zeta}((\tfrac 1r \sq u) \infconv \ind_{\Bn}) = r^n \sum_{j=0}^n \oZZ{j}{\tilde{\zeta}_j}(\tfrac 1r \sq u) = \sum_{j=0}^n r^{n-j} \oZZ{j}{\tilde{\zeta}_j}(u)$$
for every $u\in\fconvs$ and $r>0$.

In order to determine the functions $\tilde{\zeta}_j$ for $0\leq j \leq n$, we  evaluate the last expression for $u=u_t$ with $t> 0$, where $u_t(x)\!:= t \vert x\vert + \ind_{\Bn}(x)$ for $x\in\R^n$. Since
$$\big(u_t\infconv(r \sq \ind_{\Bn})\big)(x) = \begin{cases}
 0\quad &\text{for } 0\leq |x|\leq r,\\
 t(|x|-r)\quad &\text{for } r < |x| \leq 1+r,\\
+\infty \quad &\text{for } 1+r < |x|,
 \end{cases}$$
a simple calculation shows that
\begin{align*}
\oZZ{n}{\zeta}(u_t\infconv(r \sq \ind_{\Bn})) &= \int_{(1+r) \Bn} \zeta(|\nabla (u_t \infconv (r\sq \ind_{\Bn}))(x)|) \d x\\
&= \kappa_n r^n \zeta(0) + n\kappa_n \zeta(t) \int_{r}^{1+r} s^{n-1} \d s\\
&= \kappa_n r^n \zeta(0) + \sum_{j=1}^{n} \binom{n}{j} r^{n-j} \kappa_n \zeta(t)
\end{align*}
for every $r>0$ and $t> 0$. A comparison of coefficients combined with Lemma~\ref{le:calc_ind_bn_tx_theta_i} shows that $\cR^{n-j}\tilde{\zeta}_j(t)=\zeta(t)$ for every $t> 0$ and $1\leq j \leq n$. Thus, by Lemma~\ref{le:r_kln}, we get $\tilde{\zeta}_j=\cR^{-(n-j)} \zeta$ for every $1\leq j \leq n$.

For $j=0$, observe that
$$\oZZ{0}{\xi}(u)=\oZZd{0}{\xi}(u^*)=\int_{\R^n} \xi(|x|) \d x = n \, \kappa_n \lim_{s\to 0^+} \int_s^{\infty} t^{n-1} \xi(t) \d t$$
for every $u\in\fconvs$ and $\xi\in\Had{0}{n}$. Thus, our calculations combined with Lemma~\ref{le:r_kln} and the definition of $\Had{0}{n}$ show that
$$\kappa_n \zeta(0)=\oZZ{0}{\tilde{\zeta}_0}(u_t)=n \kappa_n \int_0^{\infty} \tilde{\zeta}_0(s) s^{n-1} \d s = \kappa_n \cR^{n} \tilde{\zeta}_0(0)$$
for every $t>0$. Since $\oZZ{0}{\tilde{\zeta}_0}(u)$ is independent of $u\in\fconvs$ and only depends on $\cR^n \tilde{\zeta}_0(0)$, it easily follows from Lemma~\ref{le:r_kln} that we may choose $\tilde{\zeta}_0 = \cR^{-n} \zeta$.

The result now follows by setting $\zeta_j=\frac{1}{\kappa_{n-j}} \tilde{\zeta}_j = \frac{1}{\kappa_{n-j}} \cR^{-(n-j)} \zeta$ and observing that
$$\oZZ{j}{\tilde{\zeta}_j}(u) = \kappa_{n-j} \oZZ{j}{\zeta_j}(u)$$
for every $u\in\fconvs$ and $0\leq j \leq n$.

\goodbreak
\subsection{Representation Formulas for Functional Intrinsic Volumes}

Let  $1\le j \le n$.
By Theorem \ref{thm:steiner_measures_dual}, 
\begin{equation*}
\oZZd{j}{\zeta}(v)=  \int_{\R^n} \alpha(|x|)\d\nhd{j}(v;x)
\end{equation*}
for $v\in \fconvf$ and $\zeta\in\Had{j}{n}$, where $\alpha\!: = \binom{n}{j} \cR^{n-j}\!\zeta$.  If, in addition, $v\in C^2(\R^n\backslash \{0\})$, then Theorem \ref{properties of nhm} \ref{nhm 0} and \ref{nhm c}  imply that
\begin{equation}\label{smooth_dual}
\oZZd{j}{\zeta}(v)= \alpha(0)\, V_{j}(\partial v(0)) +\int_{\R^n} \alpha(|x|)\,\det(\Hess v(x)[j],\Hess h_{\Bn}(x)[n-j])\d x.
\end{equation}
Correspondingly, by Theorem \ref{thm:steiner_measures}, we obtain  that
\begin{equation*}
\oZZ{j}{\zeta}(u)= \int_{\R^n}\alpha(|y|) \d\nhp{j}(u;y)
\end{equation*}
for $u\in\fconvs$ and $\zeta\in\Had{j}{n}$, where $\alpha$ is defined as before.
If, in addition, $u\in C_+^2(\R^n\backslash \argmin u)$, then Theorem~\ref{properties of nhp} \ref{nhm 0} and \ref{nhm c} combined with Theorem \ref{representation of Steiner measures primal smooth} imply that
\begin{equation}\label{smooth_primal}
\oZZ{j}{\zeta}(u)= \alpha(0)\, V_{j}(\argmin u) +\frac1{\binom{n}{j}}\int_{\R^n} \alpha(|\nabla u(x)|)\,\tau_{n-j}(u,x)\d x.
\end{equation}
Note that $\alpha$ can be extended to a function in $C_c([0,\infty))$ which implies that the densities in the integrals in \eqref{smooth_dual} and \eqref{smooth_primal} are not singular. Hence, in the special cases considered here, we obtain a representation of functional intrinsic volumes as Hessian valuations with continuous densities and an additional term involving classical intrinsic volumes.

\subsection{Retrieving the Classical Steiner Formula}
\label{ss:retrieve_steiner}
As a further application of Theorem~\ref{thm_steiner_functions}, we retrieve the classical Steiner formula \eqref{steiner} from \eqref{steiner_function}.
We need the following result, which shows how the classical intrinsic volumes can be retrieved from the functional intrinsic volumes.

\begin{proposition}[\!\cite{Colesanti-Ludwig-Mussnig-5}, Proposition 5.2]
\label{prop:retrieve_intrinsic_volumes}
If $\,0\leq j \leq n-1$ and $\zeta\in\Had{j}{n}$, then
$$\oZZ{j}{\zeta}(\ind_K) = \kappa_{n-j}  \cR^{n-j}\!\zeta(0)\,V_j(K)$$
for every $K\in\cK^n$.
If $\zeta\in\Had{n}{n}$, then 
$$\oZZ{n}{\zeta}(\ind_K) = \zeta(0)\, V_n(K)$$ 
for every $K\in\cK^n$.
\end{proposition}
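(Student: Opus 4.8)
The plan is to reduce the assertion to an explicit computation of the conjugate mixed Monge--Amp\`ere measure $\nhp{j}(\ind_K;\cdot)$ and then to feed the result into Theorem~\ref{thm:steiner_measures}. Note that $\ind_K\in\fconvs$ with $\ind_K^*=h_K$, so $\nhp{j}(\ind_K;\cdot)=\nhd{j}(h_K;\cdot)$, and that by Lemma~\ref{le:r_kln} the density $\alpha$ appearing in Theorem~\ref{thm:steiner_measures} equals $\binom{n}{j}\cR^{n-j}\zeta$ for every $0\le j\le n$ (with $\cR^0\zeta=\zeta$), so $\alpha(0)=\binom{n}{j}\cR^{n-j}\zeta(0)$ is well-defined. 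Hence
$$\oZZ{j}{\zeta}(\ind_K)=\int_{\R^n}\alpha(|y|)\d\nhd{j}(h_K;y),$$
and it suffices to prove that $\nhd{j}(h_K;\cdot)=\tfrac{\kappa_{n-j}}{\binom{n}{j}}\,V_j(K)\,\delta_0$.

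The heart of the matter is to show that $\nhd{j}(h_K;\cdot)$ is concentrated at the origin; this is where the $1$-homogeneity of support functions enters. For $a,b\ge 0$ we have $a\,h_K+b\,h_{\Bn}=h_{aK+b\Bn}$, so multilinearity of mixed Monge--Amp\`ere measures (Theorem~\ref{thm mixed MA measures}~\ref{MMA a}) yields
$$\MA(h_{aK+b\Bn};\cdot)=\sum_{i=0}^n\binom{n}{i}a^ib^{n-i}\,\nhd{i}(h_K;\cdot).$$
For any convex body $L$ and any Borel set $B$ with $0\notin B$, each set $\partial h_L(x)$ with $x\neq0$ is the support set of $L$ with outer normal $x$, hence is contained in the boundary of $L$ (or in $L$ itself, should $L$ fail to be full-dimensional), so $\partial h_L(B)$ has Lebesgue measure zero; this is exactly the reasoning behind \eqref{MA of the norm}. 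Consequently $\MA(h_{aK+b\Bn};B)=0$, and fixing $a=1$ we obtain a polynomial in $b$ vanishing identically, so $\nhd{i}(h_K;B)=0$ for every $0\le i\le n$. Thus $\nhd{j}(h_K;\cdot)$ is supported at $\{0\}$. Its mass is then computed from $\partial h_K(0)=K$ (immediate from the definition of the support function; compare \eqref{gradhB}) together with Theorem~\ref{properties of nhm}~\ref{nhm 0}, which gives $\nhd{j}(h_K;\{0\})=\tfrac{\kappa_{n-j}}{\binom{n}{j}}V_j(K)$.

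Plugging this back, for every $0\le j\le n$,
$$\oZZ{j}{\zeta}(\ind_K)=\alpha(0)\,\frac{\kappa_{n-j}}{\binom{n}{j}}\,V_j(K)=\kappa_{n-j}\,\cR^{n-j}\zeta(0)\,V_j(K);$$
for $0\le j\le n-1$ this is the first claimed formula, and for $j=n$, using $\cR^0\zeta=\zeta$ and $\kappa_0=1$, it reduces to $\oZZ{n}{\zeta}(\ind_K)=\zeta(0)V_n(K)$. I expect the only genuine obstacle to be the collapse of $\nhd{j}(h_K;\cdot)$ to a multiple of $\delta_0$; once that is in place, the rest is bookkeeping with the transform $\cR$ and an appeal to Theorem~\ref{properties of nhm}~\ref{nhm 0}. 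The degenerate case in which $K$ is lower-dimensional causes no trouble: there $\partial h_K(x)\subset K$ for $x\neq0$ makes the measure-zero argument only easier, while $V_n(K)=0$ keeps the $j=n$ case consistent. (One should also take care that this proof does not invoke the classical Steiner formula \eqref{steiner}, so that the subsequent retrieval of \eqref{steiner} from \eqref{steiner_function} remains non-circular.)
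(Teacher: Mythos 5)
Your proof is correct, and since the paper itself only cites \cite{Colesanti-Ludwig-Mussnig-5}, Proposition~5.2, for this statement rather than reproving it, what you have written is a genuinely new route built on the machinery of the present paper (the measures $\nhd{j}$, Theorem~\ref{thm:steiner_measures}, Theorem~\ref{properties of nhm}). The essential steps all hold up: $\ind_K^*=h_K$ gives $\nhp{j}(\ind_K;\cdot)=\nhd{j}(h_K;\cdot)$; for any convex body $L$ and Borel set $B$ with $0\notin B$ one has $\partial h_L(B)\subset \bd L$ of Lebesgue measure zero, hence $\MA(h_{K+b\Bn};B)=0$ for all $b\ge0$, and non-negativity of the Radon measures $\nhd{i}(h_K;\cdot)$ together with the multilinear expansion forces each of them to concentrate at the origin; and Theorem~\ref{properties of nhm}~\ref{nhm 0} with $\partial h_K(0)=K$, combined with $\alpha=\binom{n}{j}\cR^{n-j}\zeta$ from Lemma~\ref{le:r_kln}, closes the computation, including the $j=n$ case and the lower-dimensional degeneracies.

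The one thing to correct is the parenthetical at the end. Your proof \emph{does} invoke the classical Steiner formula \eqref{steiner}: Theorem~\ref{properties of nhm}~\ref{nhm 0} is proved in this paper by applying \eqref{steiner} to the body $\partial v(0)$, and you use it with $\partial h_K(0)=K$ an arbitrary convex body, so the full strength of \eqref{steiner} enters your argument (unlike, say, the smooth case where $\partial v(0)$ is a point and only $|r\Bn|=\kappa_n r^n$ is needed). This does not make your computation wrong --- \eqref{steiner} is of course true and intrinsic volumes are usually defined by it --- but it does mean that if your proof replaced the citation, the ``retrieval'' of \eqref{steiner} in Subsection~\ref{ss:retrieve_steiner} would become a consistency check rather than an independent derivation. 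You should either remove the remark or state plainly that your route passes through \eqref{steiner} via Theorem~\ref{properties of nhm}~\ref{nhm 0}.
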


Let $r>0$ and choose $u$ to be the convex indicator function of a convex body $K\in\cK^n$. We have
$$u\infconv(r\sq \ind_{\Bn}) = \ind_K \infconv(r \sq \ind_{\Bn}) = \ind_{K+r \Bn}$$
and therefore Theorem~\ref{thm_steiner_functions} combined with  Lemma~\ref{le:r_kln} and Proposition~\ref{prop:retrieve_intrinsic_volumes} implies that
$$\zeta(0)V_n(K+r \Bn) = \oZZ{n}{\zeta}(\ind_K\infconv (r\sq \ind_{\Bn})) = \sum_{j=0}^n r^{n-j} \oZZ{j}{\cR^{-(n-j)}\zeta}(\ind_K)= \zeta(0) \sum_{j=0}^n r^{n-j}\kappa_{n-j} V_j(K)$$
for every $K\in\cK^n$ and $\zeta\in\Had{n}{n}$, which gives the classical Steiner formula if $\zeta(0)\ne0$.

\subsection{General Functional Steiner Formulas}
\label{ss:further_steiner_formulas}
We remark that the proof of Theorem~\ref{thm_steiner_functions} shows that Steiner formulas for convex functions are also obtained if we replace the convex indicator function $\ind_{\Bn}$ by any  radially symmetric, super-coercive, convex function. Similarly, the support function $h_{\Bn}$ in Theorem~\ref{thm_steiner_functions_dual} can be replaced by any radially symmetric, finite-valued, convex function. However, in general such formulas do not give rise to all functional intrinsic volumes $\oZZd{j}{\zeta}$, that is, not all $\zeta\in\Had{j}{n}$ will appear in the polynomial expansion. For example, if $v\in\fconvf\cap C_+^2(\R^n)$, then it easily follows from \eqref{eq:rep_ozz_c2_dual} and the definition of  mixed discriminant that 
\begin{equation}\label{dualsteiner}
\oZZd{n}{\zeta}(v+\tfrac12\,r\, h_{\Bn}^2) = \sum_{j=0}^n r^{n-j} \oZZd{j}{\zeta}(v)
\end{equation}
for every $\zeta\in\Had{n}{n}$ and $r>0$. By continuity, \eqref{dualsteiner} also holds for all $v\in\fconvf$. Here we use that $\Had{n}{n}\subseteq \Had{j}{n}$ for every $0\leq j \leq n$ to show that the functional intrinsic volumes appearing in \eqref{dualsteiner} are well-defined. However, the classes $\Had{n}{n}$ and $\Had{j}{n}$ do not coincide if $j<n$, which shows that not all functional intrinsic volumes $\oZZd{n}{\zeta_j}$ with $\zeta_j\in\Had{j}{n}$ are obtained in this way.

This raises the question for which convex functions $\phi\colon [0,\infty)\to\R$ we obtain all functional intrinsic volumes when we replace $h_{\Bn}$ by $\phi\circ h_{\Bn}$. 
Let $\Vconvf{j}$ be the set of continuous, dually epi-translation and rotation invariant valuations on $\fconvf$ that are homogeneous of degree $j$. By  Theorem \ref{thm:hadwiger_convex_functions}, we know that 
$$\Vconvf{j}=\{\oZZd{j}{\zeta}\colon \zeta\in\Had{j}{n}\}$$
for $0\le j\le n$.
We obtain the following complete description if we use a regularity assumption for $\phi$.

\begin{theorem}\label{general unit ball}  Let $\phi\in C^2([0,\infty))$ be convex and such that $\phi'(0)\ge0$. For $1\le j\le n-1$, 
$$
\Vconvf{j}=\Big\{\int\nolimits_{\R^n}\beta(|x|)\d\MA(v [j], \phi\circ h_{\Bn}[n-j];x)\colon \beta\in C_c([0,\infty))\Big\},
$$
if and only if
$\phi'(0)>0$.
\end{theorem}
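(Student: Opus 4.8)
The plan is to evaluate all the valuations in question on the one-parameter family $v_t$, $t\ge0$, from \eqref{vt def}. Since $\phi'(0)\ge0$ and $\phi$ is convex, we have $\phi'\ge0$ and $\phi''\ge0$ on $[0,\infty)$, so $\phi\circ h_{\Bn}$ is a finite-valued convex function; hence, by Proposition~\ref{prop:mixed_hessian_vals} and the rotation invariance of $\beta(|\cdot|)$, of $\phi\circ h_{\Bn}$ and of mixed Monge--Amp\`ere measures,
\[
\oZ_\beta(v):=\int_{\R^n}\beta(|x|)\,\d\MA(v[j],\phi\circ h_{\Bn}[n-j];x)
\]
belongs to $\Vconvf{j}$ for every $\beta\in C_c([0,\infty))$, so the right-hand side of the asserted identity is always contained in $\Vconvf{j}$. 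By Theorem~\ref{thm:hadwiger_convex_functions_dual} every element of $\Vconvf{j}$ equals $\oZZd{j}{\zeta}$ for some $\zeta\in\Had{j}{n}$; by Lemma~\ref{le:ozzd_v_t} it is then determined by the function $t\mapsto\oZZd{j}{\zeta}(v_t)=\kappa_n\binom nj\cR^{n-j}\zeta(t)$, which, as $\zeta$ ranges over $\Had{j}{n}$, ranges over $\kappa_n\binom nj\,C_c([0,\infty))$ by Lemma~\ref{le:r_kln}. Consequently, the right-hand side equals $\Vconvf{j}$ if and only if the linear map $\beta\mapsto\big(t\mapsto\oZ_\beta(v_t)\big)$, defined on $C_c([0,\infty))$, is onto $\kappa_n\binom nj\,C_c([0,\infty))$.

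The next step, which I expect to be the crux, is to compute $\oZ_\beta(v_t)$. By the multilinearity of mixed Monge--Amp\`ere measures (Theorem~\ref{thm mixed MA measures}~\ref{MMA a}), $\binom nj\,\oZ_\beta(v_t)$ is the coefficient of $r^{n-j}$ in $\int_{\R^n}\beta(|x|)\,\d\MA(v_t+r\,\phi\circ h_{\Bn};x)$. The function $v_t+r\,\phi\circ h_{\Bn}$ is radial, equal to $G_{t,r}(|\cdot|)$ with $G_{t,r}(s)=(s-t)_++r\phi(s)$, and for a convex, nondecreasing profile $G$ one has $\partial\big(G(|\cdot|)\big)(s\Bn)=G'_+(s)\,\Bn$, so the push-forward of $\MA(G(|\cdot|);\cdot)$ under $x\mapsto|x|$ is the Lebesgue--Stieltjes measure $\kappa_n\,\d\big(G'_+(s)^n\big)$ on $[0,\infty)$ and $\int\beta(|x|)\,\d\MA(G(|\cdot|);x)=\kappa_n\int_{[0,\infty)}\beta(s)\,\d\big(G'_+(s)^n\big)$. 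Since $G'_{t,r,+}(s)=\mathbf 1_{[t,\infty)}(s)+r\phi'(s)$, expanding $\big(\mathbf 1_{[t,\infty)}+r\phi'\big)^n$, isolating the atom at $s=t$ (of mass $\kappa_n\big((1+r\phi'(t))^n-(r\phi'(t))^n\big)$) and reading off the $r^{n-j}$-coefficient (using $n\binom{n-1}{j}=(n-j)\binom nj$) gives
\[
\oZ_\beta(v_t)=\kappa_n\Big(\beta(t)\,\phi'(t)^{n-j}+(n-j)\int_t^\infty\beta(s)\,\phi'(s)^{n-j-1}\phi''(s)\,\d s\Big)
\]
for $t\ge0$; as a consistency check, $\phi(s)=s$ gives $\oZ_\beta(v_t)=\kappa_n\beta(t)$, matching Theorem~\ref{thm:steiner_measures_dual} together with Lemma~\ref{le:ozzd_v_t}. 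The difficulty here is the Monge--Amp\`ere computation for the non-smooth radial function $v_t+r\,\phi\circ h_{\Bn}$, in particular correctly capturing the singular contribution produced by the corner of $v_t$ on the sphere of radius $t$; once the displayed formula is in hand the remainder is elementary.

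It remains to decide when $\beta\mapsto b_\beta$, with $b_\beta(t):=\beta(t)\,\phi'(t)^{n-j}+(n-j)\int_t^\infty\beta(s)\,\phi'(s)^{n-j-1}\phi''(s)\,\d s$, is onto $C_c([0,\infty))$. If $\phi'(0)>0$, then $\phi'>0$ on $[0,\infty)$ and the substitution $\mu:=\beta\,(\phi')^{n-j}$ converts $b_\beta=\gamma$ into the Volterra equation $\mu(t)+\int_t^\infty\mu(s)\,K(s)\,\d s=\gamma(t)$ with continuous kernel $K=(n-j)\phi''/\phi'$; for each $\gamma\in C_c([0,\infty))$ this has a unique continuous solution $\mu$ (Picard iteration on the compact support of $\gamma$), which vanishes wherever $\gamma$ does, so $\beta=\mu/(\phi')^{n-j}\in C_c([0,\infty))$, the map is onto, and the right-hand side equals $\Vconvf{j}$. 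If $\phi'(0)=0$, then because $\phi',\phi''\ge0$ the displayed formula yields the a priori bound $|b_\beta(t)-b_\beta(0)|\le 2\|\beta\|_\infty\,\phi'(t)^{n-j}$ for all $t\ge0$, while $\phi'(t)^{n-j}\to0$ as $t\to0^+$; choosing $\gamma\in C_c([0,\infty))$ with $\gamma(0)=0$ and $\gamma(t)=\phi'(t)^{(n-j)/2}+t$ for $t$ near $0$ violates this bound for every $\beta$ --- if $\phi'\equiv0$ near $0$ because then $\gamma(t)-\gamma(0)=t\ne0$ while the right-hand side of the bound vanishes, and otherwise because $b_\beta(t)=\gamma(t)$ would force $2\|\beta\|_\infty\ge\gamma(t)/\phi'(t)^{n-j}\ge\phi'(t)^{-(n-j)/2}\to\infty$. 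Hence $\beta\mapsto b_\beta$ is not onto and the right-hand side is a proper subset of $\Vconvf{j}$, which completes the equivalence.
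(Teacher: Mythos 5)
Your proposal is correct and arrives at the same integral equation as the paper's Lemma~\ref{int_eq}, but the route to it is genuinely different, and so are the arguments on both sides of the equivalence. For the key computation of $\oZ_\beta(v_t)$, the paper works with mixed discriminants of two radial $C^2$ profiles, computes them in a diagonalizing frame, passes to polar coordinates and integrates by parts, and then approximates $\psi_t(r)=\max\{0,r-t\}$ by smooth convex profiles; you instead push forward $\MA(G(|\cdot|);\cdot)$ under $x\mapsto|x|$ to the Lebesgue--Stieltjes measure $\kappa_n\,\d\!\left(G'_+(s)^n\right)$ on $[0,\infty)$ (using $\partial\big(G(|\cdot|)\big)(s\Bn)=G'_+(s)\Bn$), expand $\big(\mathbf 1_{[t,\infty)}+r\phi'\big)^n$ and read off the $r^{n-j}$-coefficient via the Steiner expansion of Theorem~\ref{thm mixed MA measures}~\ref{MMA a}. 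This avoids all smoothing of $v_t$ and handles the atom at $s=t$ transparently. For sufficiency under $\phi'(0)>0$, the paper writes an explicit closed-form $\beta$ and verifies it (note: the printed formula in Proposition~\ref{general unit ball - sufficiency} appears to carry a sign slip, although the subsequent reformulation as $-\kappa_n\int_t^\infty\beta'(r)\phi'(r)^{n-j}\d r=\alpha(t)$ is correct); you instead substitute $\mu=\beta(\phi')^{n-j}$ and solve a Volterra equation with continuous kernel by Picard iteration, which is arguably more robust. For necessity under $\phi'(0)=0$, the paper differentiates the integral equation after assuming $\beta\in C^1_c$ (a step that implicitly needs justification when $\phi'$ vanishes near $0$) and exhibits a blow-up of the explicit $\beta$; you derive the a priori bound $|b_\beta(t)-b_\beta(0)|\le 2\|\beta\|_\infty\,\phi'(t)^{n-j}$ directly from the formula, requiring no regularity on $\beta$ beyond continuity, and violate it with the test function $\gamma(t)=\phi'(t)^{(n-j)/2}+t$ near $0$, handling the two sub-cases $\phi'\equiv0$ near $0$ and $\phi'>0$ on $(0,\epsilon)$ separately. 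The case split there should perhaps be spelled out slightly more explicitly (the set $\{\phi'=0\}$ is a closed interval $[0,a]$ since $\phi'$ is nondecreasing, which is what guarantees your dichotomy is exhaustive), and "vanishes wherever $\gamma$ does" overstates what Picard iteration gives (it gives vanishing to the right of $\operatorname{supp}\gamma$, which is all you need), but these are cosmetic. The reduction of the problem to surjectivity of $\beta\mapsto b_\beta$ onto $C_c([0,\infty))$ coincides with the paper's use of Lemma~\ref{le:v_t_determines}.
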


We require the following results for the proof of Theorem \ref{general unit ball}. The function $v_t$ is defined in \eqref{vt def}.
\begin{lemma}
\label{le:v_t_determines}
Let $\oZ_1,\oZ_2\colon\fconvf\to\R$ be continuous, dually epi-translation and rotation invariant valuations that are homogeneous of degree $j$ with $0\leq j \leq n$. If $\oZ_1(v_t)=\oZ_2(v_t)$ for every $t\geq 0$, then $\oZ_1\equiv \oZ_2$.
\end{lemma}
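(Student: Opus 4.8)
The plan is to reduce the statement to the functional Hadwiger theorem on $\fconvf$ (Theorem~\ref{thm:hadwiger_convex_functions_dual}) together with the explicit evaluation of functional intrinsic volumes on the functions $v_t$ (Lemma~\ref{le:ozzd_v_t}).

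First I would use that $\oZ_1$ and $\oZ_2$, being continuous, dually epi-translation and rotation invariant valuations that are homogeneous of degree $j$, lie in $\Vconvf{j}$. Since $\Vconvf{j}=\{\oZZd{j}{\zeta}\colon\zeta\in\Had{j}{n}\}$ (homogeneity of degree $j$ singles out the degree-$j$ summand in the Hadwiger decomposition, the remaining summands being homogeneous of other degrees), there are functions $\zeta_1,\zeta_2\in\Had{j}{n}$ with $\oZ_i=\oZZd{j}{\zeta_i}$ for $i=1,2$.

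For $1\le j\le n$, Lemma~\ref{le:ozzd_v_t} gives $\oZ_i(v_t)=\kappa_n\binom{n}{j}\cR^{n-j}\zeta_i(t)$ for every $t\ge0$, so the hypothesis $\oZ_1(v_t)=\oZ_2(v_t)$ for all $t\ge0$ forces $\cR^{n-j}\zeta_1=\cR^{n-j}\zeta_2$ on $(0,\infty)$. By Lemma~\ref{le:r_kln} the map $\cR^{n-j}\colon\Had{j}{n}\to\Had{j}{j}$ is a bijection, hence injective, so $\zeta_1=\zeta_2$ and therefore $\oZ_1\equiv\oZ_2$. For $j=0$ the argument is immediate: $\oZZd{0}{\zeta}$ is a constant independent of its argument for every $\zeta\in\Had{0}{n}$, so the single identity $\oZ_1(v_0)=\oZ_2(v_0)$ already yields $\oZ_1\equiv\oZ_2$.

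I do not expect a genuine obstacle here, since all needed ingredients are available. The only steps that require a moment's thought are the passage from ``homogeneous of degree $j$'' to ``equal to some $\oZZd{j}{\zeta}$'', which is exactly the identity for $\Vconvf{j}$ recorded above, and the observation that injectivity of the transform $\cR^{n-j}$ on $\Had{j}{n}$ is precisely what converts the coincidence of the one-parameter families $t\mapsto\oZ_i(v_t)$ into the equality $\zeta_1=\zeta_2$.
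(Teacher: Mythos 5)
Your proof is correct and follows essentially the same route as the paper's own proof: decompose via the Hadwiger theorem on $\fconvf$ (Theorem~\ref{thm:hadwiger_convex_functions_dual}) to write $\oZ_i=\oZZd{j}{\zeta_i}$, evaluate on $v_t$ via Lemma~\ref{le:ozzd_v_t}, and use the injectivity of $\cR^{n-j}$ from Lemma~\ref{le:r_kln} to conclude $\zeta_1=\zeta_2$, with the trivial case $j=0$ handled separately. The minor remark that homogeneity isolates the degree-$j$ summand is the same observation the paper records when defining $\Vconvf{j}$.
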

\begin{proof}
By Theorem~\ref{thm:hadwiger_convex_functions_dual}, there exist $\zeta_{1}, \zeta_{2}\in\Had{j}{n}$ such that
$$\oZ_1(v)=\oZZd{j}{\zeta_{1}}(v) \qquad \text{and}\qquad \oZ_2(v)=\oZZd{j}{\zeta_{2}}(v)$$ for every $v\in\fconvf$.
If $j=0$, then both $\oZ_1$ and $\oZ_2$ are constants, independent of $v$, and thus the statement is trivial. For $1\leq j\leq n$, it follows from Lemma~\ref{le:ozzd_v_t} and our assumptions on $\oZ_1$ and $\oZ_2$ that
$$\kappa_n \binom{n}{j} \cR^{n-j} \zeta_1(t)=\oZZd{j}{\zeta_1}(v_t)=\oZ_1(v_t)=\oZ_2(v_t) = \oZZd{j}{\zeta_2}(v_t)=\kappa_n \binom{n}{j} \cR^{n-j} \zeta_2(t)$$
for every $t\geq 0$. By Lemma~\ref{le:r_kln} this implies $\zeta_1\equiv \zeta_2$ and thus $\oZ_1\equiv \oZ_2$.
\end{proof}

\noindent
We remark that it would be of great interest to find a proof of the previous lemma that does not require Theorem~\ref{thm:hadwiger_convex_functions_dual}. In particular, this would provide a new strategy to prove the Hadwiger theorem for convex functions.

\goodbreak
\begin{lemma}\label{int_eq}
Let $1\le j\le n-1$, let $\phi\in C^2([0,\infty))$ be convex with $\phi'(0)\geq 0$ and $\beta\in C_c([0,\infty))$. If the functional $\,\bar\oZ\colon\fconvf\to \R$ is given by
\begin{equation}
\label{eq:z_equals_z_phi_beta}
\bar\oZ(v):= \int_{\R^n} \beta(|x|) \d\MA(v[j], \phi \circ h_{\Bn} [n-j]; x),
\end{equation}
then
\begin{equation}\label{integral equation}
\kappa_n\Big(\beta(t)\phi'(t)^{n-j}+(n-j)\int_t^{\infty}\beta(r)\phi'(r)^{n-j-1}\phi''(r)\d r\Big)=\bar\oZ(v_t)
\end{equation}
for $t\ge0$.
\end{lemma}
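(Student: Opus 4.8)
The plan is to prove \eqref{integral equation} first for $t>0$ and then to recover the case $t=0$ by continuity: as $t\to 0^+$ the functions $v_t$ epi-converge to $v_0=h_{\Bn}$, the functional $\bar\oZ$ is continuous by Proposition~\ref{prop:mixed_hessian_vals}, and the right-hand side of \eqref{integral equation} depends continuously on $t$ at $t=0$.

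So fix $t>0$ and approximate $v_t$ from inside by smooth radial functions. I would choose a convex $g_\varepsilon\in C^2([0,\infty))$ with $0\le g_\varepsilon'\le 1$, $g_\varepsilon\equiv 0$ on $[0,t]$ and $g_\varepsilon(r)=r-t$ on $[t+\varepsilon,\infty)$, so that $g_\varepsilon'$ increases from $0$ to $1$ on $[t,t+\varepsilon]$, and set $v_t^\varepsilon(x):=g_\varepsilon(|x|)$. Then $v_t^\varepsilon\in\fconvf\cap C^2(\R^n)$ (it vanishes on $t\Bn$), $v_t^\varepsilon\to v_t$ locally uniformly, hence $v_t^\varepsilon$ epi-converges to $v_t$, and by the continuity statement of Proposition~\ref{prop:mixed_hessian_vals} we have $\bar\oZ(v_t^\varepsilon)\to\bar\oZ(v_t)$. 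It thus suffices to compute $\lim_{\varepsilon\to 0^+}\bar\oZ(v_t^\varepsilon)$.

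Since $\phi\circ h_{\Bn}\in C^2(\R^n\setminus\{0\})$, Theorem~\ref{thm mixed MA measures}~\ref{MMA c} shows that $\MA(v_t^\varepsilon[j],\phi\circ h_{\Bn}[n-j];\cdot)$ is absolutely continuous on $\R^n\setminus\{0\}$ with density $x\mapsto\det(\Hess v_t^\varepsilon(x)[j],\Hess(\phi\circ h_{\Bn})(x)[n-j])$, and this measure carries no mass at the origin: approximating $\phi\circ h_{\Bn}$ by functions in $C^2(\R^n)$ that coincide with it outside a small ball contained in the interior of $t\Bn$, and using the weak continuity of Theorem~\ref{thm mixed MA measures}~\ref{MMA d}, one sees that on that ball the approximating (absolutely continuous) measures vanish---because $\Hess v_t^\varepsilon\equiv 0$ there and $j\ge 1$---hence so does their weak limit. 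Therefore $\bar\oZ(v_t^\varepsilon)=\int_{\R^n}\beta(|x|)\det(\Hess v_t^\varepsilon(x)[j],\Hess(\phi\circ h_{\Bn})(x)[n-j])\,\d x$. For $x\ne 0$, with $r:=|x|$ and $\hat x:=x/|x|$, both Hessians have the form
\[
\Hess v_t^\varepsilon(x)=\tfrac{g_\varepsilon'(r)}{r}(I_n-\hat x\otimes\hat x)+g_\varepsilon''(r)\,\hat x\otimes\hat x,\qquad
\Hess(\phi\circ h_{\Bn})(x)=\tfrac{\phi'(r)}{r}(I_n-\hat x\otimes\hat x)+\phi''(r)\,\hat x\otimes\hat x,
\]
so that $\det(\lambda\Hess v_t^\varepsilon(x)+\mu\Hess(\phi\circ h_{\Bn})(x))=r^{-(n-1)}(\lambda g_\varepsilon'(r)+\mu\phi'(r))^{n-1}(\lambda g_\varepsilon''(r)+\mu\phi''(r))$; comparing the coefficient of $\lambda^j\mu^{n-j}$ in \eqref{polarisation md} yields
\[
\tbinom nj\det(\Hess v_t^\varepsilon(x)[j],\Hess(\phi\circ h_{\Bn})(x)[n-j])=\tfrac{1}{r^{n-1}}\Big(\tbinom{n-1}{j}g_\varepsilon'(r)^j\phi'(r)^{n-j-1}\phi''(r)+\tbinom{n-1}{j-1}g_\varepsilon'(r)^{j-1}\phi'(r)^{n-j}g_\varepsilon''(r)\Big).
\]
Integrating in polar coordinates and using $\binom{n-1}{j}/\binom nj=(n-j)/n$, $\binom{n-1}{j-1}/\binom nj=j/n$ and $\hm^{n-1}(\sn)=n\kappa_n$ gives
\[
\bar\oZ(v_t^\varepsilon)=\kappa_n\Big((n-j)\int_0^{\infty}\beta(r)\,g_\varepsilon'(r)^j\,\phi'(r)^{n-j-1}\phi''(r)\,\d r+j\int_0^{\infty}\beta(r)\,g_\varepsilon'(r)^{j-1}\phi'(r)^{n-j}g_\varepsilon''(r)\,\d r\Big).
\]

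Finally I would let $\varepsilon\to 0^+$. In the first integral, $g_\varepsilon'(r)^j\to 1$ for every $r>t$ and $g_\varepsilon'(r)^j\to 0$ for $0\le r<t$, while $0\le g_\varepsilon'\le 1$; dominated convergence gives $\int_0^{\infty}\beta(r)\,g_\varepsilon'(r)^j\,\phi'(r)^{n-j-1}\phi''(r)\,\d r\to\int_t^{\infty}\beta(r)\phi'(r)^{n-j-1}\phi''(r)\,\d r$. For the second integral I would use the identity $g_\varepsilon'(r)^{j-1}g_\varepsilon''(r)=\tfrac1j\tfrac{\d}{\d r}\big(g_\varepsilon'(r)^j\big)$: since $r\mapsto g_\varepsilon'(r)^j$ is nondecreasing, equals $0$ on $[0,t]$ and $1$ on $[t+\varepsilon,\infty)$, the measures $\tfrac{\d}{\d r}(g_\varepsilon'(r)^j)\,\d r$ are probability measures supported in $[t,t+\varepsilon]$, hence converge weakly to $\delta_t$, and the second integral converges to $\tfrac1j\beta(t)\phi'(t)^{n-j}$. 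Combining the two limits with $\bar\oZ(v_t^\varepsilon)\to\bar\oZ(v_t)$ establishes \eqref{integral equation} for $t>0$, and then for $t=0$ by continuity. The main obstacle is to account correctly for the part of $\MA(v_t[j],\phi\circ h_{\Bn}[n-j];\cdot)$ concentrated on the sphere $t\,\sn$, across which $\nabla v_t$ jumps; the identity $g_\varepsilon'(r)^{j-1}g_\varepsilon''(r)=\tfrac1j\frac{\d}{\d r}\big(g_\varepsilon'(r)^j\big)$ rewrites the mass concentrating there as a total derivative whose distributional limit is precisely $\delta_t$, which is exactly what produces the boundary term $\kappa_n\beta(t)\phi'(t)^{n-j}$.
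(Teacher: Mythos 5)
Your argument is correct in its essentials, and it reaches \eqref{integral equation} by a route that differs from the paper's in one instructive way. Both proofs approximate $v_t$ by smooth radial functions and use the explicit formula for the mixed discriminant of Hessians of radial functions, but they handle the concentration at $r=t$ differently. The paper first restricts to $\beta\in C^1_c([0,\infty))$, integrates by parts in the radial variable to move the derivative off the radial profile onto $\beta$ (so only the bounded quantity $\psi'_{t,j}$ appears, never $\psi''_{t,j}$), passes to the limit by dominated convergence, integrates by parts back to produce the boundary term $\beta(t)\phi'(t)^{n-j}$, and finally approximates general $\beta\in C_c$. You keep $\beta$ undifferentiated throughout, compute $\bar\oZ(v_t^\varepsilon)$ directly, and recognize the dangerous piece of the density — the term carrying $g_\varepsilon''$ — as the total derivative $\tfrac1j\,\tfrac{\d}{\d r}\!\bigl(g_\varepsilon'(r)^j\bigr)$, whose associated measures are probability measures on $[t,t+\varepsilon]$ converging weakly to $\delta_t$. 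This isolates the geometric origin of the boundary term transparently and avoids the detour through $C^1$ densities; the paper's approach is, in exchange, indifferent to the precise shape of the approximating sequence.

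Two small points merit tightening. First, the approximant as written is over-determined: a convex $C^2$ function cannot simultaneously vanish on $[0,t]$, satisfy $0\le g_\varepsilon'\le 1$, and equal $r\mapsto r-t$ on $[t+\varepsilon,\infty)$, since $g_\varepsilon'(t)=0$ and $g_\varepsilon'\le 1$ force $\int_t^{t+\varepsilon}g_\varepsilon'\d r<\varepsilon$, whereas agreeing with $r-t$ at $r=t+\varepsilon$ requires that integral to equal $\varepsilon$. The fix is cosmetic: only $g_\varepsilon'\equiv 1$ on $[t+\varepsilon,\infty)$ is needed (so $g_\varepsilon(r)=r-c_\varepsilon$ there for some $c_\varepsilon\in(t,t+\varepsilon)$), and every subsequent step uses only $g_\varepsilon'$, not the value of $g_\varepsilon$. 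Second, the argument that $\MA(v_t^\varepsilon[j],\phi\circ h_{\Bn}[n-j];\cdot)$ has no mass at the origin, while salvageable, is more delicate than it appears because a $C^2$ convex modification of $\phi\circ h_{\Bn}$ near $0$ is not immediate. It is cleaner to argue as in the proof of Theorem~\ref{properties of nhm}~\ref{nhm 0}: since $\partial v_t^\varepsilon(0)=\{0\}$ and $\partial(\phi\circ h_{\Bn})(0)=\phi'(0)\Bn$, Theorem~\ref{thm mixed MA measures}~\ref{MMA a} applied to $\vert\partial(v_t^\varepsilon+r\,\phi\circ h_{\Bn})(0)\vert=(r\phi'(0))^n\kappa_n$ shows that the coefficient of $r^{n-j}$ vanishes for every $j\ge 1$, so $\MA(v_t^\varepsilon[j],\phi\circ h_{\Bn}[n-j];\{0\})=0$.
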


\begin{proof}
First, let $\phi,\psi\in C^2([0,\infty))$ be convex and such that $\psi'(0)=0$. 
We want to compute the mixed discriminant
$$
\det\big(\Hess (\phi\circ h_{\Bn})[n-j],\Hess (\psi\circ h_{\Bn})[j]\big).
$$
For $x\in\R^n$, set $r\!:=\vert x\vert$. For $r>0$, by the radial symmetry of $\phi\circ h_{\Bn}$ and $\psi\circ h_{\Bn}$ and by choosing a coordinate system such that $e_n$ is parallel to $x$, we obtain
$$
\Hess (\phi\circ h_{\Bn})(x)=\diag\Big(\frac{\phi'(r)}{r},\dots,\frac{\phi'(r)}{r},\phi''(r)\Big),\quad
\Hess (\psi\circ h_{\Bn})(x)=\diag\Big(\frac{\psi'(r)}{r},\dots,\frac{\psi'(r)}{r},\psi''(r)\Big),
$$
where $\diag(\lambda_1,\dots,\lambda_n)$ is the $n\times n$ diagonal matrix with entries $\lambda_1, \dots,\lambda_n$ in the diagonal.
Therefore, for $\varepsilon>0$,
$$
\det\big(\Hess (\phi\circ h_{\Bn})(x)+\varepsilon\,\Hess (\psi\circ h_{\Bn})(x)\big)=
\Big(\frac{\phi'(r)}{r}+\varepsilon\frac{\psi'(r)}{r}\Big)^{n-1}(\phi''(r)+\varepsilon\psi''(r)).
$$
Using the previous expression and \eqref{polarisation md}, we obtain, after some computations, that
$$
\det\big(\Hess (\phi\circ h_{\Bn})(x)[n-j],\Hess (\psi\circ h_{\Bn})(x)[j]\big)=\frac 1{n r^{n-1}}\left(\phi'(r)^{n-j}\psi'(r)^j\right)'.
$$ 

Next, assume that $\beta\in C^1_c([0,\infty))$.  By the previous step and Theorem~\ref{thm mixed MA measures} \ref{MMA c},
\begin{equation}\label{smooth case}
\begin{array}{rcl}
\displaystyle\int_{\R^n}\beta(|x|)\d\MA(\phi\circ h_{\Bn}[n-j],\psi\circ h_{\Bn}[j];x)&=&\displaystyle\kappa_n\int_0^\infty\beta(r)\left(\phi'(r)^{n-j}\psi'(r)^j\right)'\d r\\
&=&\displaystyle-\kappa_n\int_0^\infty\beta'(r)\phi'(r)^{n-j}\psi'(r)^j\d r
\end{array}
\end{equation}
where we used polar coordinates, integration by parts and the condition $\psi'(0)=0$. For $t>0$, set 
$$
\psi_t(r):=\max\{0,r-t\}
$$
for $r>0$. Note that for $v_t$, we have
$$v_t=\psi_t\circ h_{\Bn}.$$
For $t>0$, there exists a sequence of convex functions $\psi_{t,j}$ converging to $\psi_t$ uniformly in $[0,\infty)$ and such that $\psi_{t,j}\in C^2([0,\infty))$ and $\psi_{t,j}'(0)=0$ for every $j$. Moreover, the sequence $\psi_{t,j}$ can be chosen so that $\psi'_{t,j}$ is uniformly bounded and converges pointwise to $\psi_t'$ in $[0,\infty)$ except for $r=t$. By \eqref{smooth case}, the weak continuity of Monge--Amp\`ere measures, the fact that the support of $\beta$ is bounded, and the dominated convergence theorem, we obtain that
\begin{eqnarray*}
\int_{\R^n}\beta(|x|)\d\MA(v_t[j],\phi\circ h_{\Bn}[n-j];x)=-\kappa_n\int_t^{\infty}\beta'(r)\phi'(r)^{n-j}\d r.
\end{eqnarray*}
Integration by parts gives
\begin{equation*}
\int_{\R^n}\beta(|x|)\d\MA(v_t[j],\phi\circ h_{\Bn}[n-j];x)=\kappa_n\big(\beta(t)\phi'(t)^{n-j}+(n-j)\int_t^{\infty}\beta(r)\phi'(r)^{n-j-1}\phi''(r)\d r\big).
\end{equation*}
This equation, which has been proved in the case $\beta\in C^1_c([0,\infty))$, can now be extended to the case that $\beta\in C_c([0,\infty))$ by approximating $\beta$ uniformly on its support by a sequence of functions in $C^1_c([0,\infty))$. 
\end{proof}

\goodbreak
Theorem~\ref{general unit ball} follows from the next two propositions.

\begin{proposition}\label{general unit ball - sufficiency} Let $\phi\in C^2([0,\infty))$ be convex and let $\phi'(0)>0$. If $\,\oZ\colon\fconvf\to\R$ is a continuous,  dually epi-translation and rotation invariant valuation that is homogeneous of degree $j$ with $1\le j\le n-1$, then there exists $\beta\in C_c([0,\infty))$ such that
$$
\oZ(v)=\int_{\R^n}\beta(|x|)\d\MA(v[j],\phi\circ h_{\Bn}[n-j];x)
$$
for every $v\in\fconvf$.
\end{proposition}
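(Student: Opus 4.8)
The plan is to exhibit an explicit density $\beta$ and then to identify $\oZ$ with the associated integral functional by testing both sides against the one-parameter family $v_t$, using that such valuations are determined by their values on the $v_t$ (Lemma~\ref{le:v_t_determines}). First I would apply Theorem~\ref{thm:hadwiger_convex_functions_dual} together with the degree-$j$ homogeneity of $\oZ$ to write $\oZ=\oZZd{j}{\zeta}$ for some $\zeta\in\Had{j}{n}$, and set $g:=\binom nj\cR^{n-j}\zeta$. By Lemma~\ref{le:r_kln} we have $g\in\Had nn$, so $g$ may be regarded as an element of $C_c([0,\infty))$, and by Lemma~\ref{le:ozzd_v_t} we get $\oZ(v_t)=\kappa_n\,g(t)$ for every $t\ge0$.

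Next I would define $\beta\colon[0,\infty)\to\R$ by
\[
\beta(t):=\frac{g(t)}{\phi'(t)^{n-j}}-(n-j)\int_t^{\infty}\frac{\phi''(r)}{\phi'(r)^{n-j+1}}\,g(r)\d r.
\]
Here the hypothesis $\phi'(0)>0$ is used decisively: since $\phi$ is convex, $\phi'$ is nondecreasing, so $\phi'(t)\ge\phi'(0)>0$ for all $t\ge0$, the denominators cause no trouble, and $\beta$ is continuous on $[0,\infty)$; as $g$ has compact support, $\beta$ vanishes for large $t$, hence $\beta\in C_c([0,\infty))$. I would then set
\[
\bar\oZ(v):=\int_{\R^n}\beta(|x|)\d\MA(v[j],\phi\circ h_{\Bn}[n-j];x),
\]
and observe that $\phi\circ h_{\Bn}\in\fconvf$ because $\phi$ is nondecreasing and convex, so Proposition~\ref{prop:mixed_hessian_vals} applies and $\bar\oZ$ is a continuous, dually epi-translation invariant valuation that is homogeneous of degree $j$; rotation invariance of $\bar\oZ$ is immediate from the radial symmetry of $\beta(|\cdot|)$ and of $\phi\circ h_{\Bn}$.

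The main step is to check that $\bar\oZ(v_t)=\oZ(v_t)$ for every $t\ge0$. By Lemma~\ref{int_eq},
\[
\bar\oZ(v_t)=\kappa_n\Big(\beta(t)\,\phi'(t)^{n-j}+(n-j)\int_t^{\infty}\beta(r)\,\phi'(r)^{n-j-1}\phi''(r)\d r\Big),
\]
so it suffices to verify that, for the above $\beta$, the bracket equals $g(t)$. This is a direct calculation: one substitutes the definition of $\beta$ into the integral term, exchanges the order of integration (legitimate since $g$ has compact support and $\phi',\phi''$ are continuous), and uses $\int_t^{s}(n-j)\phi'(r)^{n-j-1}\phi''(r)\d r=\phi'(s)^{n-j}-\phi'(t)^{n-j}$; the two contributions involving $\int_t^{\infty}\frac{\phi''(r)}{\phi'(r)^{n-j+1}}g(r)\d r$ then cancel and only $g(t)$ survives. (Equivalently, one checks that $t\mapsto\beta(t)\phi'(t)^{n-j}+(n-j)\int_t^{\infty}\beta(r)\phi'(r)^{n-j-1}\phi''(r)\d r$ and $g$ solve the same first-order linear differential equation and agree at $t=+\infty$, where both vanish for $t$ large.) With $\bar\oZ(v_t)=\oZ(v_t)$ established, Lemma~\ref{le:v_t_determines} yields $\oZ\equiv\bar\oZ$, which is the claim.

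I expect the only genuine obstacle to be the inversion of the integral relation of Lemma~\ref{int_eq} — namely, guessing the correct $\beta$ and confirming that it actually lies in $C_c([0,\infty))$, which is exactly the point where $\phi'(0)>0$ is essential; everything else is bookkeeping with the cited results.
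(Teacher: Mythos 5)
Your proof is correct and is essentially the paper's: identify $\oZ=\oZZd{j}{\zeta}$ via Theorem~\ref{thm:hadwiger_convex_functions_dual}, invert the integral relation of Lemma~\ref{int_eq} to construct an explicit $\beta\in C_c([0,\infty))$ satisfying $\bar\oZ(v_t)=\oZ(v_t)$ (the hypothesis $\phi'(0)>0$ keeps the denominators bounded away from zero and forces $\beta$ to have compact support), and conclude with Lemma~\ref{le:v_t_determines}. One small remark: your explicit density carries the right signs --- differentiating your $\beta$ yields $\beta'(t)=g'(t)/\phi'(t)^{n-j}$, which is precisely what the reduced form $-\kappa_n\int_t^\infty\beta'(r)\phi'(r)^{n-j}\d r=\kappa_n g(t)$ of \eqref{integral equation} demands --- whereas the paper's displayed formula \eqref{beta} for $\beta$ appears to have a sign typo, reading $-\tfrac{1}{\kappa_n}(A+B)$ where the correct inverse is $+\tfrac{1}{\kappa_n}(A-B)$.
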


\begin{proof}
Given $\alpha\in C_c([0,\infty))$, define the function $\beta\colon[0,\infty)\to\R$ as
\begin{equation}\label{beta}
\beta(t):=-\frac1{\kappa_n}\left(\frac{\alpha(t)}{\phi'(t)^{n-j}}+(n-j)\int_t^\infty\frac{\alpha(r)}{\phi'(r)^{n-j+1}}\phi''(r)\d r\right),
\end{equation}
where we use that $\phi'(t)>0$ for every $t\in[0,\infty)$. Also note that $\beta\in C_c([0,\infty))$. We claim that $\beta$ is a solution of the equation
\begin{equation}\label{alpha}
\kappa_n\Big(\beta(t)\phi'(t)^{n-j}+(n-j)\int_t^{\infty}\beta(r)\phi'(r)^{n-j-1}\phi''(r)\d r\Big)=\alpha(t).
\end{equation}
If we assume that $\alpha\in C_c^1([0,\infty))$, then also $\beta\in C_c^1([0,\infty))$ and \eqref{alpha} can be written in the form
\begin{equation*}\label{integral equation 2}
-\kappa_n\int_t^\infty\beta'(r)\phi'(r)^{n-j}\d r=\alpha(t).
\end{equation*}
Hence the claim is easily verified under the additional assumption on $\alpha$. The general case is obtained by approximation.

For $t\ge0$, define 
$$
\alpha(t):=\oZ(v_t).
$$
By Theorem~\ref{thm:hadwiger_convex_functions_dual}, Lemma~\ref{le:ozzd_v_t} and Lemma~\ref{le:r_kln}, we know that $\alpha\in C_c([0,\infty))$. For this function $\alpha$, define $\beta\!:[0,\infty)\to \R$ by \eqref{beta}. Define $\bar{\oZ}\colon \fconvf\to\R$  as
$$
\bar{\oZ}(v):=\int_{\R^n}\beta(|x|)\d\MA(v[j], \phi\circ h_{\Bn}[n-j];x).
$$
By Proposition~\ref{prop:mixed_hessian_vals}, the functional $\bar{\oZ}$
is a continuous, dually epi-translation and rotation invariant valuation on $\fconvf$ that is homogeneous of degree $j$. By Lemma \ref{int_eq} and \eqref{alpha},
$$
\oZ(v_t)=\bar{\oZ}(v_t)
$$
for every $t\ge0$. By Lemma~\ref{le:v_t_determines}, this implies that $\oZ\equiv \bar{\oZ}$.
\end{proof}

\begin{proposition}\label{general unit ball - necessity} Let $1\le j\le n-1$. If $\phi\in C^2([0,\infty))$ is convex and $\phi'(0)=0$, then there exists a continuous, dually epi-translation and rotation invariant valuation $\oZ\colon\fconvf\to \R$ that is homogeneous of degree $j$  such that 
\begin{equation*}
\oZ(v)= \int_{\R^n} \beta(|x|) \d\MA(v[j],\phi \circ h_{\Bn} [n-j]; x) \quad \text{ for all } v\in\fconvf
\end{equation*}
is not verified by any $\beta\in C_c([0,\infty))$.
\end{proposition}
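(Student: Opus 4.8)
The plan is to reduce the statement, via the one-parameter family $v_t$, to a non-surjectivity question for a single integral operator on $C_c([0,\infty))$, and then to exhibit a function outside its range, using $\phi'(0)=0$ in an essential way.

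By Theorem~\ref{thm:hadwiger_convex_functions_dual}, the class $\Vconvf{j}$ of continuous, dually epi-translation and rotation invariant valuations on $\fconvf$ that are homogeneous of degree $j$ equals $\{\oZZd{j}{\zeta}\colon\zeta\in\Had{j}{n}\}$, and by Lemma~\ref{le:v_t_determines} each such valuation is uniquely determined by the function $t\mapsto\oZ(v_t)$ on $[0,\infty)$. Since $\oZZd{j}{\zeta}(v_t)=\kappa_n\binom nj\cR^{n-j}\zeta(t)$ (Lemma~\ref{le:ozzd_v_t}) and $\cR^{n-j}$ maps $\Had{j}{n}$ bijectively onto $\Had{n}{n}$ (Lemma~\ref{le:r_kln}), which is identified with $C_c([0,\infty))$, the assignment $\oZ\mapsto\big(t\mapsto\oZ(v_t)\big)$ is a bijection of $\Vconvf{j}$ onto $C_c([0,\infty))$; the harmless constant $\kappa_n\binom nj$ does not change the image. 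On the other hand, Lemma~\ref{int_eq} states that if $\bar\oZ$ has the form \eqref{eq:z_equals_z_phi_beta} for some $\beta\in C_c([0,\infty))$, then $t\mapsto\bar\oZ(v_t)$ equals $T\beta$, where
\[
(T\beta)(t):=\kappa_n\Big(\beta(t)\,\phi'(t)^{n-j}+(n-j)\int_t^{\infty}\beta(r)\,\phi'(r)^{n-j-1}\phi''(r)\,\d r\Big),
\]
and one checks at once that $T$ maps $C_c([0,\infty))$ into itself. Hence it suffices to find $\alpha_0\in C_c([0,\infty))$ that is not of the form $T\beta$: the valuation $\oZ\in\Vconvf{j}$ with $\oZ(v_t)=\alpha_0(t)$ then cannot have the form \eqref{eq:z_equals_z_phi_beta}, for that would force $\alpha_0=T\beta$ with the corresponding $\beta$.

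The obstruction I would use is that, for every $\beta\in C_c([0,\infty))$, the function $\alpha:=T\beta$ satisfies $\lim_{t\to0^+}\big(\alpha(t)-\alpha(0)\big)/t=0$. Here $\phi'(0)=0$ enters: writing $\beta=\beta(0)+\gamma$ with $\gamma\in C_c([0,\infty))$, $\gamma(0)=0$, and using the elementary identity $\int_0^t\phi'(r)^{n-j-1}\phi''(r)\,\d r=\tfrac1{n-j}\phi'(t)^{n-j}$ (valid because $\phi'(0)=0$), the contributions of the constant $\beta(0)$ cancel and
\[
\alpha(t)-\alpha(0)=\kappa_n\,\gamma(t)\,\phi'(t)^{n-j}-\kappa_n(n-j)\int_0^t\gamma(r)\,\phi'(r)^{n-j-1}\phi''(r)\,\d r,
\]
so $|\alpha(t)-\alpha(0)|\le 2\kappa_n\,\phi'(t)^{n-j}\sup_{[0,t]}|\gamma|$. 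Since $\phi\in C^2([0,\infty))$ with $\phi'(0)=0$ gives $\phi'(t)\le Mt$ for $t\in[0,1]$ with $M:=\max_{[0,1]}\phi''$, and since $n-j\ge 1$, the right-hand side is $o(t)$ as $t\to0^+$. Now choose $\alpha_0\in C_c([0,\infty))$ with $\alpha_0(t)=t$ for $t\in[0,1]$ and $\alpha_0$ supported in $[0,2]$; then $\big(\alpha_0(t)-\alpha_0(0)\big)/t=1\not\to0$, so $\alpha_0\notin T\big(C_c([0,\infty))\big)$, and the associated $\oZ\in\Vconvf{j}$ has the required property. This argument needs no case distinction and also covers degenerate $\phi$ (for instance, if $\phi$ is affine then $\phi$ is constant and $T\equiv0$, while still $\alpha_0\neq0$).

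The main obstacle is the second step, i.e.\ isolating the correct quantitative obstruction. A crude estimate yields only $\alpha(t)\to\alpha(0)$, which is no obstruction at all; one genuinely needs the cancellation of the $\beta(0)$-terms forced by $\phi'(0)=0$, together with the refinement $\sup_{[0,t]}|\gamma|\to0$, to upgrade continuity to the statement $\alpha'(0^+)=0$. Conceptually, $\phi'(0)=0$ means the measures in \eqref{eq:z_equals_z_phi_beta} carry no mass at the origin --- one has $\MA(v[j],\phi\circ h_{\Bn}[n-j];\{0\})=0$ for all $v$, in contrast to $\nhd{j}(v;\cdot)$ (cf.\ Theorem~\ref{properties of nhm}\,\ref{nhm 0}) --- and it is exactly this ``loss of the atomic part'' that makes $t\mapsto\bar\oZ(v_t)$ flat at $t=0$ and thereby obstructs representability. (For the sufficiency direction one inverts the operator $T$ explicitly, which is the content of Proposition~\ref{general unit ball - sufficiency} and requires $\phi'(0)>0$.)
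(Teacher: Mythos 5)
Your argument is correct, and it reaches the contradiction by a genuinely different mechanism than the paper's. The paper also builds its counterexample from a choice of $\alpha$ with $\alpha'(0)>0$, but it then argues on the side of $\beta$: assuming $\alpha\in C^2_c$, it bootstraps to $\beta\in C^1_c$, rewrites \eqref{integral equation} as $-\kappa_n\int_t^\infty\beta'(r)\phi'(r)^{n-j}\,\d r=\alpha(t)$, solves to get $\beta(t)=-\tfrac1{\kappa_n}\int_t^\infty\alpha'(r)/\phi'(r)^{n-j}\,\d r$, and observes that this diverges as $t\to0^+$ because $\phi'(0)=0$ forces $1/\phi'(r)^{n-j}$ to be non-integrable at the origin. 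You instead stay entirely on the side of $\alpha$: you show directly that the operator $T$ defined by the left-hand side of \eqref{integral equation} always produces functions with $(T\beta)'(0^+)=0$, via the cancellation of the $\beta(0)$-terms (which uses $\phi'(0)=0$ through the identity $\int_0^t\phi'(r)^{n-j-1}\phi''(r)\,\d r=\phi'(t)^{n-j}/(n-j)$) and the $o(1)$-modulus of $\gamma=\beta-\beta(0)$. This buys you two things the paper's route handles only implicitly: you never need the regularity bootstrap $\alpha\in C^2\Rightarrow\beta\in C^1$ (which the paper asserts without justification, and which is delicate when $\phi'$ vanishes on an interval near $0$), and your estimate $|\alpha(t)-\alpha(0)|\le 2\kappa_n\phi'(t)^{n-j}\sup_{[0,t]}|\gamma|$ covers degenerate $\phi$ (e.g.\ $\phi'\equiv0$ near $0$, or $\phi$ constant) without case distinctions. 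Your closing remark --- that $\phi'(0)=0$ kills the atomic part of $\MA(v[j],\phi\circ h_{\Bn}[n-j];\cdot)$ at the origin, in contrast to $\nhd j(v;\cdot)$ via Theorem~\ref{properties of nhm}~\ref{nhm 0} --- gives the right conceptual picture for why the obstruction is localized at $t=0$.

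Two minor points you may want to make explicit if you write this up. First, the reduction to ``$\alpha_0\notin T(C_c)$'' relies on the fact that the functional in \eqref{eq:z_equals_z_phi_beta} is itself a continuous, dually epi-translation and rotation invariant, $j$-homogeneous valuation (Proposition~\ref{prop:mixed_hessian_vals}), so that Lemma~\ref{le:v_t_determines} applies and the equality of the two valuations is equivalent to $\alpha_0=T\beta$; you invoke Lemma~\ref{int_eq} but do not quite say this. Second, the non-negativity of the integrand $\phi'(r)^{n-j-1}\phi''(r)$ (needed to bound the integral term by $\sup_{[0,t]}|\gamma|\cdot\phi'(t)^{n-j}/(n-j)$) uses both convexity of $\phi$ and $\phi'\ge\phi'(0)=0$; it is worth stating.
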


\begin{proof} Let $\alpha\in C^2_c([0,\infty))$ be such that $\alpha'(0)>0$. By Lemma~\ref{le:ozzd_v_t} and Lemma~\ref{le:r_kln}, there exists a continuous, dually epi-translation and rotation invariant valuation $\oZ$ on $\fconvf$ that is homogeneous of degree $j$ such that 
$$
\oZ(v_t)=\alpha(t)
$$
for $t\ge 0$. Assume that there exists $\beta\in C_c([0,\infty))$ such that \eqref{eq:z_equals_z_phi_beta} is satisfied for this functional $\oZ$. 
By Lemma \ref{int_eq}, 
the function $\beta$ has to verify \eqref{integral equation}.
As $\alpha\in C^2_c([0,\infty)$, we have $\beta\in C^1_c([0,\infty))$, and the equation takes the form 
$$-\kappa_n \int_t^\infty \beta'(r)\,\phi'(r)^{n-j}\d r=\alpha(t)$$
for $t> 0$. Consequently,
$$
\beta(t)=-\frac1{\kappa_n}\int_t^\infty\frac{\alpha'(r)}{\phi'(r)^{n-j}}\d r
$$
for $t>0$. By the conditions on $\phi$ and $\alpha$, we conclude that
$$
\lim_{t\to0^+}\beta(t)=+\infty,
$$
which is a contradiction.
\end{proof}

\subsection*{Acknowledgments} M. Ludwig was supported, in part, by the Austrian Science Fund (FWF):  P 34446 and 
F. Mussnig was supported by the Austrian Science Fund (FWF): J 4490-N.

\end{document}